\newtheorem{theorem}{Theorem}[section]
\newtheorem{lemma}[theorem]{Lemma}
\newtheorem{remark}[theorem]{Remark}
\newtheorem{definition}[theorem]{Definition}
\newtheorem{corollary}[theorem]{Corollary}
\begin{document}

\title[Solutions to the affine quasi-Einstein equation]
{Solutions to the affine quasi-Einstein equation for homogeneous surfaces}
\author{M. Brozos-V\'{a}zquez \, E. Garc\'{i}a-R\'{i}o\, P. Gilkey \, X. Valle-Regueiro}
\address{MBV: Universidade da Coru\~na, Differential Geometry and its Applications Research Group, Escola Polit\'ecnica Superior, 15403 Ferrol,  Spain}
\email{miguel.brozos.vazquez@udc.gal}
\address{EGR-XVR: Faculty of Mathematics,
University of Santiago de Compostela,
15782 Santiago de Compostela, Spain}
\email{eduardo.garcia.rio@usc.es, javier.valle@usc.es}
\address{PBG: Mathematics Department, University of Oregon, Eugene OR 97403-1222, USA}
\email{gilkey@uoregon.edu}
\thanks{Supported by projects ED431F 2017/03, and MTM2016-75897-P (Spain).}
\subjclass[2010]{53C21, 53B30, 53C24, 53C44}
\keywords{Affine quasi-Einstein equation; homogeneous affine surface; affine Killing vector field; Type~$\mathcal{A}$,
Type~$\mathcal{B}$ and  Type~$\mathcal{C}$ geometry}

\begin{abstract}
We examine the space of solutions to the affine quasi--Einstein equation in the context of homogeneous surfaces.
As these spaces can be used to create gradient Yamabe solitions, conformally Einstein metrics, and warped product Einstein manifolds using
the modified Riemannian extension, we provide very explicit descriptions of these solution spaces. We use the dimension of the space of affine
Killing vector fields to structure our discussion as this provides a convenient organizational framework.
\end{abstract}

\maketitle
\section{Introduction}
Let $\mathcal{M}=(M,\nabla)$ be an affine surface. Here $M$ is a smooth connected
surface and $\nabla$ is a torsion free connection on the tangent bundle of $M$. 
The curvature operator and the Ricci tensor are given by
$$
R(X,Y):=[\nabla_X,\nabla_Y]-\nabla_{[X,Y]}\text{ and }\rho(X,Y):=\operatorname{Tr}\{ Z\rightarrow R({Z,X})Y\}\,.
$$
The symmetrization and anti-symmetrization of the Ricci tensor are given by
$$
\rho_{s}(X,Y):=\textstyle\frac12\{\rho(X,Y)+\rho(Y,X)\}\text{ and }\rho_{a}(X,Y):=\textstyle\frac12\{\rho(X,Y)-\rho(Y,X)\}\,.
$$
Let $(x^1,x^2)$ be a system of local coordinates on $M$. To simplify the notation, we let $\partial_{x^i}:=\frac\partial{\partial x^i}$.
Adopt the {\it Einstein convention} and sum over repeated indices. Expand $\nabla_{\partial_{x^i}}\partial_{x^j}=\Gamma_{ij}{}^k\partial_{x^k}$
to define the Christoffel symbols of $\nabla$; since $\nabla$ is torsion free, $\Gamma_{ij}{}^k=\Gamma_{ji}{}^k$. 

\subsection{The affine quasi-Einstein equation}

Let $f\in C^\infty(M)$. Let
$$
\mathcal{H} f=\nabla^2f:=(\partial_{x^i}\partial_{x^j}f-\Gamma_{ij}{}^k\partial_{x^k}f)\,dx^i\otimes dx^j
$$
be the Hessian. The {\it affine quasi-Einstein equation}  is 
\begin{equation}\label{E1.a}
\mathcal{H} f=\mu f \rho_{s}\,,
\end{equation}
where $\mu\in\mathbb{R}$ is the eigenvalue parameter. Denote the associated eigenspace by
$$
E(\mu,\nabla):=\{f\in C^\infty(M):\mathcal{H} f=\mu f \rho_{s}\}\,.
$$
Similarly, if $P\in M$, let $E(P,\mu,\nabla)$ be the germs at $P$ of solutions to Equation~(\ref{E1.a}). We showed \cite{BGGV17a}
that if $M$ is simply connected and if $\dim\{E(P,\mu,\nabla)\}$ is constant on $M$, then any $f\in E(P,\mu,\nabla)$
extends uniquely to an element of $E(\mu,\nabla)$. Thus for the simply connected homogeneous affine geometries, there is no distinction between the
local and the global theory.

In earlier work \cite{BGGV17}, we used the modified Riemannian extension to pass between solutions to Equation~(\ref{E1.a})
and  solutions to the quasi-Einstein equation for manifolds of signature $(2,2)$. 
Let $\pi:T^*M\rightarrow M$ be the canonical projection and let $\phi$ be an
arbitrary symmetric 2-tensor field. If $\vec x=(x^1,x^2)$ is a system of coordinates on $M$, expand a 1-form $\omega=y_1dx^1+y_2dx^2$ to define
coordinates $(x^1,x^2, y_1,y_2)$ on $T^*M$. Construct the modified Riemannian extension on $T^* M$ by setting
\begin{equation}\label{E1.b}
g_{\nabla,\phi}=dx^i\otimes dy_i+dy_i\otimes dx^i+(\phi_{ij}-2y_k\Gamma_{ij}{}^k)dx^i\otimes dx^j\,.
\end{equation}
Let $f\in E(\mu,\nabla)$ be positive. Express $f=e^{-\frac12\mu h}$. Then results of \cite{BGGV17} show that
$(T^*M,g_{\nabla,\phi},\hat h=\pi^*h,\frac12 \mu)$ is a quasi-Einstein isotropic manifold, i.e. we have:
$$
\mathcal{\hat H}\,\hat h+\hat\rho-\textstyle\frac12\,\mu\, d\hat h\otimes d\hat h=\lambda g_{\nabla,\phi}\,\,\text{ for }
\,\,\lambda=0\,\,\text{ and }\,\,\|d\hat h\|=0\,,
$$
where $\mathcal{\hat H}$ and $\hat{\rho}$ are the Hessian and the Ricci tensors on $(T^*M,g_{\nabla,\phi})$.
Furthermore, any self-dual quasi-Einstein isotropic manifold of signature $(2,2)$ with $\mu\neq-1$ has this form \cite{ BGGV17}.

The affine quasi-Einstein equation codifies important geometric information. Solutions corresponding to the eigenvalue
$\mu=0$ give rise to gradient Yamabe solitons on $(T^*M,g_{\nabla,\phi})$ (see \cite{BCGV16}) while solutions for the eigenvalue $\mu=-1$
provide conformally Einstein structures on $(T^*M,g_{\nabla,\phi})$.
Suppose $\mu=\frac{2}{r}$ for $r$ a positive integer. Results of \cite{KimKim} show that if $N$ is a Ricci flat manifold of dimension $r$,
then the warped product $(T^*M,g_{\nabla,\phi})\times_{e^{-\pi^*h/r}}N$ is again an Einstein manifold
if $h$ is a solution to the affine quasi-Einstein equation. Thus it is important to have solutions
to the affine quasi-Einstein equation for quite general eigenvalues $\mu$.

A smooth vector field $X$ on $\mathcal{M}$ is said to be an \emph{affine Killing vector field}
if the Lie derivative $\mathcal{L}_X\nabla$ of $\nabla$ vanishes (see \cite{KN63}). 
Let $\mathfrak{K}(\mathcal{M})$ be the Lie algebra of affine Killing vector fields of $\mathcal{M}$.
In \cite{BGGV17a}, we studied Equation~(\ref{E1.a}) in its own right and in a more general context showing:
\begin{theorem}\label{T1.1}
Let $m=\dim M $. If $f\in E(P,\mu,\nabla)$ satisfies $f(P)=0$ and if $df(P)=0$, then $f$ vanishes identically. 
Consequently, $\dim\{E(P,\mu,\nabla)\}\le m+1$.
Furthermore, if $X$ is the germ of an affine Killing vector field based at $P$, then $XE(P,\mu,\nabla)\subset E(P,\mu,\nabla)$.
Thus $E(P,\mu,\nabla)$ is a module over the Lie algebra of germs of affine Killing vector fields.
\end{theorem}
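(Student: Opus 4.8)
The plan is to establish the three assertions in order, the first functioning as the engine for the other two.

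\emph{Step 1: a first-order prolongation and the rigidity statement.} I would rewrite Equation~(\ref{E1.a}) as a first-order overdetermined linear system. Working in a coordinate chart about $P$, introduce the unknowns $f$ and $f_k:=\partial_{x^k}f$ and assemble them into $U=(f,f_1,\dots,f_m)$, a section of a trivial rank-$(m+1)$ bundle. The tautological identities $\partial_{x^i}f=f_i$ together with Equation~(\ref{E1.a}) written as $\partial_{x^i}f_j=\Gamma_{ij}{}^k f_k+\mu f(\rho_s)_{ij}$ then read $\partial_{x^i}U=A_i\,U$ for suitable smooth matrix-valued functions $A_i$. Restricting to an arbitrary smooth curve $\gamma$ issuing from $P$, the section $U\circ\gamma$ solves the linear ordinary differential equation $\frac{d}{dt}(U\circ\gamma)=\dot\gamma^i\,A_i\,(U\circ\gamma)$, so by uniqueness for linear ODEs the condition $U(P)=0$ forces $U\equiv0$ along $\gamma$. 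Since each point of a connected neighbourhood of $P$ is joined to $P$ by such a curve, $f(P)=0$ and $df(P)=0$ imply $f\equiv0$. Hence the linear evaluation map $E(P,\mu,\nabla)\to\mathbb{R}^{m+1}$, $f\mapsto(f(P),df(P))$, is injective, and $\dim\{E(P,\mu,\nabla)\}\le m+1$.

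\emph{Step 2: invariance under affine Killing vector fields.} The cleanest route uses the local flow $\phi_t$ of $X$. Since $\mathcal{L}_X\nabla=0$, for $|t|$ small we have $\phi_t^*\nabla=\nabla$, hence $\phi_t^*\rho=\rho$ and so $\phi_t^*\rho_s=\rho_s$; moreover, since $\phi_t$ is an affine diffeomorphism and $d$ is natural, $\phi_t^*(\nabla^2 f)=\nabla^2(\phi_t^*f)$. Pulling Equation~(\ref{E1.a}) back by $\phi_t$ therefore shows $f\circ\phi_t\in E(P,\mu,\nabla)$ for all small $t$ whenever $f\in E(P,\mu,\nabla)$. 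Because $\nabla^2$ and multiplication by $\mu\rho_s$ do not depend on $t$, differentiating $\nabla^2(f\circ\phi_t)=\mu(f\circ\phi_t)\rho_s$ at $t=0$ and using $\frac{d}{dt}\big|_{t=0}(f\circ\phi_t)=\mathcal{L}_Xf=Xf$ gives $\nabla^2(Xf)=\mu(Xf)\rho_s$, i.e. $Xf\in E(P,\mu,\nabla)$. (Equivalently, one may apply $\mathcal{L}_X$ directly to Equation~(\ref{E1.a}), invoking $\mathcal{L}_X(\nabla\,df)=\nabla\,d(Xf)$, valid precisely because $\mathcal{L}_X\nabla=0$, together with $\mathcal{L}_X\rho_s=0$.) Finally, germs of affine Killing vector fields act on germs of functions by $X\cdot f:=Xf$, with $[X,Y]\cdot f=X\cdot(Y\cdot f)-Y\cdot(X\cdot f)$ automatic; Step 2 shows this action preserves $E(P,\mu,\nabla)$, which is thus a module over $\mathfrak{K}$.

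I expect the only genuinely delicate point to be Step 1: one must arrange the prolongation so that the coefficient matrices $A_i$ are manifestly smooth, and the ODE-uniqueness argument should be run on a connected neighbourhood of $P$ so that the statement about germs is unambiguous. Step 2 is then routine; indeed one can bypass the interchange of derivatives entirely by observing that $Xf$ is the derivative at $t=0$ of the smooth curve $t\mapsto f\circ\phi_t$ in the finite-dimensional vector space $E(P,\mu,\nabla)$ produced in Step 1, and hence lies in it. This is also why the three assertions must be proved in the stated order.
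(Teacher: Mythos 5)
Your proof is correct; note that this paper only quotes Theorem~\ref{T1.1} from \cite{BGGV17a} and gives no proof of it here, so there is nothing internal to compare against. Your argument --- prolonging $\mathcal{H}f=\mu f\rho_s$ to the first-order linear system $\partial_{x^i}U=A_iU$ in $U=(f,\partial_{x^1}f,\dots,\partial_{x^m}f)$, using ODE uniqueness along curves for the rigidity and dimension bound, and then either differentiating the flow $\phi_t$ of $X$ at $t=0$ inside the finite-dimensional space $E(P,\mu,\nabla)$ or applying $\mathcal{L}_X$ directly with $\mathcal{L}_X\nabla=0$ and $\mathcal{L}_X\rho_s=0$ --- is exactly the standard route taken in the cited reference.
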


A connection $\nabla$ is said to be  {\it strongly projectively flat} if $\nabla$
is strongly projectively equivalent to a flat connection, i.e. if there exists a  closed $1$-form $\omega$
on $M$ so that $\Gamma_{ij}{}^k=\omega_i \delta_j^k +\omega_j\delta_i^k$. 
An affine surface $\mathcal{M}$ is strongly projectively flat if and only if both $\rho$ and $\nabla\rho$
are totally symmetric (see, for example \cite{E70, NS, S95}).
The following two results of~\cite{BGGV17b} relate this notion to the dimension of the space of solutions to the affine quasi-Einstein equation.

\begin{theorem}\label{T1.2}
Let  $\mathcal{M}$ be a non-flat  affine surface. Then $\dim\{E(-1,\nabla)\}\ne2$. Moreover  $\dim\{E(-1,\nabla)\}=3$ if and only if $\mathcal{M}$ is strongly projectively flat and
 $\dim\{E(\mu,\nabla)\}=3$ for $\mu\neq-1$ if and only if $\mathcal{M}$ is flat.
\end{theorem}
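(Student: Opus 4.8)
The plan is to linearize Equation~(\ref{E1.a}). Introducing the auxiliary unknown $\omega=df$, a function $f$ lies in $E(\mu,\nabla)$ precisely when the pair $(f,\omega)$ is a section of the rank three bundle $\mathcal{V}:=\underline{\mathbb{R}}\oplus T^*M$ that is parallel for the linear connection
\[
\mathcal{D}_X(f,\omega):=\big(Xf-\omega(X),\ \nabla_X\omega-\mu f\,\rho_s(X,\cdot)\big)\,,
\]
since the first component records $\omega=df$ and the second then records $\mathcal{H}f=\mu f\rho_s$. A parallel section of a connection is determined by its value at any one point, so $f\mapsto(f(P),df(P))$ embeds $E(\mu,\nabla)$ into $\mathcal{V}_P\cong\mathbb{R}^3$ (this recovers the bound $\dim\le m+1=3$ of Theorem~\ref{T1.1}), and $\dim\{E(\mu,\nabla)\}=3$ if and only if $\mathcal{D}$ is flat (on a simply connected $M$, and in the local theory in general; the passage between the local and global pictures is the remark of the Introduction, which in particular covers the simply connected homogeneous case). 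I will also use the standard fact that on an affine surface the full curvature is recovered from the Ricci tensor, $R(X,Y)Z=\rho(Y,Z)X-\rho(X,Z)Y$, so that the induced curvature on one forms is $R^{T^*M}(X,Y)\omega=\omega(Y)\,\rho(X,\cdot)-\omega(X)\,\rho(Y,\cdot)$.

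The second step is to compute the curvature of $\mathcal{D}$. Its $\underline{\mathbb{R}}$ component vanishes identically (the equation $df=\omega$ is unobstructed, using that $\nabla$ is torsion free), and a short computation with the Leibniz rule and the commutation formula for covariant derivatives gives
\[
\mathcal{R}^{\mathcal{D}}(X,Y)(f,\omega)=\Big(0,\ -\mu f\big(\iota_Y\nabla_X\rho_s-\iota_X\nabla_Y\rho_s\big)+\omega(Y)\,\iota_X\big((1+\mu)\rho_s+\rho_a\big)-\omega(X)\,\iota_Y\big((1+\mu)\rho_s+\rho_a\big)\Big)\,.
\]
Since $f$ and $\omega$ are independent and $\rho_s,\rho_a$ are respectively symmetric and antisymmetric, $\mathcal{R}^{\mathcal{D}}=0$ is equivalent to the three conditions: $\rho_a=0$, $(1+\mu)\rho_s=0$, and $\nabla\rho_s$ totally symmetric (the last is vacuous when $\mu=0$, but then holds anyway as $\rho_s=0$). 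For $\mu=-1$ the middle condition is automatic, so what remains is exactly that $\rho$ and $\nabla\rho$ are totally symmetric, i.e.\ $\mathcal{M}$ is strongly projectively flat; this proves the second assertion. For $\mu\neq-1$ the middle condition forces $\rho_s=0$, hence $\rho=0$, hence $R=0$ by the recovery formula, so $\mathcal{M}$ is flat; and conversely a flat surface has $\rho=0$, so all three conditions hold and $\mathcal{D}$ is flat. This proves the third assertion. Thus the exceptional role of $\mu=-1$ in the statement is precisely the vanishing of $(1+\mu)$ in this flatness criterion.

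The remaining assertion, $\dim\{E(-1,\nabla)\}\neq2$, is the one I expect to require the most care, since it does not follow from the flatness criterion alone. Suppose $\dim\{E(-1,\nabla)\}=2$ with basis $f_1,f_2$. Their one jets $\sigma_i=(f_i,df_i)$ are $\mathcal{D}$-parallel, and by Theorem~\ref{T1.1} their values span a $2$-plane $W_P\subset\mathcal{V}_P$ at \emph{every} point; hence $\mathcal{R}^{\mathcal{D}}(X,Y)|_P$ annihilates $W_P$ and so has rank at most one there. Setting $\mu=-1$ and writing $\rho_a=\tau\,dx^1\wedge dx^2$, the displayed curvature collapses, in a coordinate frame, to the single endomorphism $(f,\omega)\mapsto\big(0,\ f\,\Theta+\tau\,\omega\big)$ of $\mathcal{V}_P$, where $\Theta:=\iota_{\partial_{x^2}}\nabla_{\partial_{x^1}}\rho_s-\iota_{\partial_{x^1}}\nabla_{\partial_{x^2}}\rho_s$ is the obstruction to $\nabla\rho_s$ being symmetric. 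If $\tau(P)\neq0$ this map is onto the fiber $T^*_PM$, hence has rank two, a contradiction; so $\rho_a\equiv0$. With $\rho_a=0$ the endomorphism reduces to $(f,\omega)\mapsto(0,f\,\Theta)$, whose kernel at a point where $\Theta\neq0$ is precisely $\{f=0\}$. Since $W_P$ must lie in this kernel there, every solution vanishes on the open set $\{\Theta\neq0\}$; by the unique continuation of Theorem~\ref{T1.1} this forces $f_1\equiv f_2\equiv0$ unless that set is empty, so $\Theta\equiv0$ as well. But $\rho_a\equiv0$ and $\Theta\equiv0$ say that $\rho$ and $\nabla\rho$ are totally symmetric, i.e.\ $\mathcal{M}$ is strongly projectively flat, whence the second assertion gives $\dim\{E(-1,\nabla)\}=3$, contradicting the assumption. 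This contradiction completes the argument.
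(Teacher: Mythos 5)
Your argument is correct. Note that this paper does not actually prove Theorem~\ref{T1.2}; it quotes it from \cite{BGGV17b}, so there is no in-text proof to compare against. Your prolongation of Equation~(\ref{E1.a}) to the connection $\mathcal{D}$ on $\underline{\mathbb{R}}\oplus T^*M$ is the natural invariant packaging of the integrability conditions that any proof must extract (in \cite{BGGV17b} this is done by differentiating $f_{;ij}=\mu f(\rho_s)_{ij}$ and applying the Ricci identity), so the substance is the same even if the presentation is more geometric. I verified the computation: the scalar component of $\mathcal{R}^{\mathcal{D}}$ vanishes by torsion-freeness, the identity $R(X,Y)Z=\rho(Y,Z)X-\rho(X,Z)Y$ is valid for any torsion-free connection on a surface even when $\rho$ is not symmetric (the first Bianchi identity is vacuous in dimension two), and the $T^*M$ component of the curvature is exactly as you display it. The splitting into the three conditions $\rho_a=0$, $(1+\mu)\rho_s=0$, and total symmetry of $\nabla\rho_s$, and the identification with strong projective flatness when $\mu=-1$, are correct; so is the $\dim\ne 2$ argument, where the rank bound forces $\rho_a\equiv 0$, the kernel condition together with the unique continuation of Theorem~\ref{T1.1} forces $\Theta\equiv 0$, and the already-established maximal case then yields the contradiction.

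One caveat, which you flagged but which deserves emphasis: the implication ``$\mathcal{D}$ flat $\Rightarrow\dim\{E(\mu,\nabla)\}=3$'' (hence the ``if'' direction of the second assertion and the final step of the $\dim\ne2$ argument) holds for the germ spaces $E(P,\mu,\nabla)$, or globally when $M$ is simply connected, but not for global solution spaces in general: $\mathbb{RP}^2$ with the round Levi--Civita connection is non-flat and strongly projectively flat, yet $E(-1,\nabla)=0$ globally since the first spherical harmonics are odd under the antipodal map. So the theorem must be read locally (which is how it is used in this paper, where the models are simply connected). The ``only if'' directions need no such hypothesis, since evaluating a maximal family of parallel sections at a single point already forces $\mathcal{R}^{\mathcal{D}}$ to vanish there.
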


\begin{theorem}\label{T1.3}
Let  $\mathcal{M}$ be a strongly projectively flat affine surface with Ricci tensor of rank two. Then 
$\dim \{E(0,\nabla)\}=1$ and  $\dim \{E(\mu,\nabla)\}=0$ for $\mu\neq -1 ,0$.
\end{theorem}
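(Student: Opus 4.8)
The plan is to work locally and reduce the affine quasi-Einstein equation to a single scalar second-order equation whose integrability condition forces rigidity. Since $\mathcal{M}$ is strongly projectively flat and $\omega$ is closed, about each point there are coordinates $(x^1,x^2)$ and a smooth function $\psi$ with $\Gamma_{ij}{}^k=\psi_{,i}\delta_j^k+\psi_{,j}\delta_i^k$, where $\psi_{,i}:=\partial_{x^i}\psi$. Strong projective flatness makes $\rho$ symmetric, so $\rho_s=\rho$, and a direct computation gives $\mathcal{H}f(\partial_{x^i},\partial_{x^j})=f_{,ij}-\psi_{,i}f_{,j}-\psi_{,j}f_{,i}$ together with $\rho_{ij}=\psi_{,i}\psi_{,j}-\psi_{,ij}$. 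Substituting $f=e^{\psi}w$, I expect the first-order cross terms to cancel so that Equation~(\ref{E1.a}) becomes
\[
w_{,ij}=(1+\mu)\,w\,\rho_{ij}\,.
\]

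Assume now $\mu\neq-1$, so $1+\mu\neq0$. Differentiating this relation and using that the third partials $w_{,ijk}$ are symmetric in $j$ and $k$ gives $w_{,k}\rho_{ij}+w\,\partial_k\rho_{ij}=w_{,j}\rho_{ik}+w\,\partial_j\rho_{ik}$. Computing $\partial_k\rho_{ij}-\partial_j\rho_{ik}=\psi_{,ik}\psi_{,j}-\psi_{,ij}\psi_{,k}$ and substituting $\psi_{,ij}=\psi_{,i}\psi_{,j}-\rho_{ij}$, I expect the cubic terms in $d\psi$ to cancel, leaving the first-order integrability condition
\[
v_k\,\rho_{ij}=v_j\,\rho_{ik}\quad\text{for all }i,j,k\,,\qquad v_k:=w_{,k}+w\,\psi_{,k}=e^{-\psi}f_{,k}\,.
\]
Taking $j=1$ and $k=2$ shows every row of $(\rho_{ij})$ is proportional to the covector $(v_1,v_2)$; but $\rho$ has rank two, so its rows are linearly independent, which is impossible unless $v\equiv0$, that is $df\equiv0$. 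Since $M$ is connected, $f$ is constant.

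Feeding a constant $f$ into $\mathcal{H}f=\mu f\rho_s$ gives $0=\mu f\rho_s$, and as $\rho_s$ has rank two it is nowhere zero, so $\mu f=0$. Therefore, for $\mu\neq-1,0$ the only solution is $f\equiv0$ and $\dim\{E(\mu,\nabla)\}=0$, while for $\mu=0$ the solutions are exactly the constants, so $\dim\{E(0,\nabla)\}=1$.

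The step I expect to require the most care is the derivation of the integrability condition $v_k\rho_{ij}=v_j\rho_{ik}$: in coordinates $\partial_k\rho_{ij}$ is \emph{not} totally symmetric---only the covariant derivative $\nabla\rho$ is, which is exactly the strong projective flatness hypothesis---and it is this non-symmetric remainder that upgrades a tautology into a usable relation. Getting that cancellation right, and then correctly invoking the rank-two hypothesis to conclude $v\equiv0$, is the crux; everything else is routine.
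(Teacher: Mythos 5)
Your proof is correct. Note that the paper does not actually prove Theorem~\ref{T1.3}; it is quoted from \cite{BGGV17b}, so there is no in-paper argument to compare against line by line. Your route is the natural one and all the computations check out: in the local gauge $\Gamma_{ij}{}^k=\psi_{,i}\delta_j^k+\psi_{,j}\delta_i^k$ one indeed has $\rho_{ij}=\psi_{,i}\psi_{,j}-\psi_{,ij}$ (symmetric, consistent with strong projective flatness), the substitution $f=e^\psi w$ reduces Equation~(\ref{E1.a}) to $w_{,ij}=(1+\mu)w\,\rho_{ij}$, and the commutation of third partials yields exactly $v_k\rho_{ij}=v_j\rho_{ik}$ with $v=e^{-\psi}df$ after the cubic terms in $d\psi$ cancel. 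The rank-two hypothesis then forces $v\equiv0$, and the endgame ($f$ constant, $\mu f\rho_s=0$) is as you say. For context, the paper's own toolkit reaches the $\mu=0$ case by a different-looking but equivalent mechanism (Lemma~4.1 of \cite{BGG16}, quoted in the proof of Lemma~\ref{L3.4}, gives $R(df)=0$ for any $f\in E(0,\nabla)$, which kills $df$ when $\rho$ has rank two); your integrability identity is precisely the extension of that curvature obstruction to general $\mu\ne-1$, with the factor $1+\mu$ explaining why $\mu=-1$ is the exceptional eigenvalue. One cosmetic caveat: strong projective flatness gives the normal form of the Christoffel symbols only in coordinates adapted to the underlying flat connection, and $\omega=d\psi$ only locally; since your conclusion $df=0$ is pointwise and $M$ is connected, this localization is harmless, but it is worth stating explicitly.
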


\subsection{Aim of the paper}
Theorem~\ref{T1.1} implies that $\dim\{E(\mu,\nabla)\}\le3$ for any affine surface.
In \cite{BGGV17b}, we determined $\dim\{E(\mu,\nabla)\}$ for homogeneous affine surfaces.
One has $E(0,\nabla)=\ker\{\mathcal{H}\}$ is the space of Yamabe solitons; thus $\mu=0$ is a distinguished eigenvalue.
The eigenvalue $\mu=-1$ is distinguished as well. One has, for example, that $\dim\{E(-1,\nabla)\}=3$ is the maximal value
if and only if $\mathcal{M}$ is strongly projectively flat. In this companion paper to \cite{BGGV17b}, instead of simply computing
the dimension of $E(\mu,\nabla)$, we shall determine explicit bases for these spaces since, as noted above, these can be used
to create gradient Yamabe solitons, conformally Einstein metrics and warped product Einstein manifolds by means of modified Riemannian extensions.
In Section~\ref{S3}, we study homogeneous surfaces with $\dim\{\mathfrak{K}(\mathcal{M})\}=4$. In Section~\ref{S4},
we study homogeneous surfaces with $\dim\{\mathfrak{K}(\mathcal{M})\}=3$; these are all of
Type~$\mathcal{B}$ or of Type~$\mathcal{C}$. In Section~\ref{S5},
we conclude our analysis by studying homogeneous surfaces with $\dim\{\mathfrak{K}(\mathcal{M})\}=2$.

\section{Homogeneous surfaces}

A diffeomorphism $\Psi:M_1\rightarrow M_2$ between two  affine surfaces $\mathcal{M}_1=(M_1,\nabla^1)$ 
and $\mathcal{M}_2=(M_2,\nabla^2)$ is said to be an \emph{affine isomorphism} if  $\Psi^*(\nabla^2)=\nabla^1$; in this setting
we have $\Psi^*\{E(\mu,\nabla{}^2)\}=E(\mu,\nabla^1)$ and $\Psi^*\{E(\Psi(P),\mu,\nabla{}^2)\}=E(P,\mu,\nabla^1)$.

An affine surface $\mathcal{M}$ is said to be \emph{homogeneous} if for any two points $P,Q\in\mathcal{M}$,
there exists an affine isomorphism $\Psi:\mathcal{M}\rightarrow\mathcal{M}$ so that $\Psi(P)=Q$; in this setting,
the Lie algebra of affine Killing vector fields $\mathfrak{K}(\mathcal{M})$ is at least $2$-dimensional. 
A $2$-dimensional Lie algebra $\mathfrak{g}=\operatorname{Span}\{ X,Y\}$ is either Abelian 
(i.e., $[X,Y]=0$) or solvable (i.e., $[X,Y]=Y$). Assume that $\mathfrak{K}(\mathcal{M})$ contains 
a  $2$-dimensional Abelian sub-algebra generated by affine Killing vector fields $X$ and $Y$ such 
that $[X,Y]=0$. Then there exist local coordinates $(x^1,x^2)$ so that the corresponding Christoffel 
symbols $\Gamma_{ij}{}^k$ are constant (see Lemma 1 in \cite{AMK08}).
On the other hand, if $\mathfrak{K}(\mathcal{M})$ contains a  $2$-dimensional solvable  sub-algebra
generated by affine Killing vector fields $X$ and $Y$ such that $[X,Y]=Y$,  then there exist local coordinates
$(x^1,x^2)$ so that the corresponding Christoffel symbols satisfy $\Gamma_{ij}{}^k=(x^1)^{-1}C_{ij}{}^k$
for some constants $C_{ij}{}^k$ (see Lemma 2 in \cite{AMK08}).

Work of Opozda \cite{Op04} shows that any homogeneous affine surface $\mathcal{M}$ 
corresponds to one of the above examples and is modeled on one of the following geometries:
\begin{enumerate}
\item[($\mathcal{A}$)] $(\mathbb{R}^2,\nabla)$ where the Christoffel symbols $\Gamma_{ij}{}^k=\Gamma_{ji}{}^k\in\mathbb{R}$ are
constant. $\{\partial_{x^1},\partial_{x^2}\}$ generate an Abelian Lie sub-algebra of Killing vector fields. The translation group 
$(x^1,x^2)\rightarrow(x^1+b^1,x^2+b^2)$ preserves this geometry and
acts transitively on $\mathbb{R}^2$ so this is a homogeneous geometry.
\item[($\mathcal{B}$)] $(\mathbb{R}^+\times\mathbb{R},\nabla)$ with Christoffel symbols $\Gamma_{ij}{}^k=(x^1)^{-1}C_{ij}{}^k$
or $C_{ij}{}^k=C_{ji}{}^k$ constant. $\{x^1\partial_{x^1}+x^2\partial_{x^2},\partial_{x^2}\}$ generate a
non-Abelian Lie sub-algebra of Killing vector fields. The ``$a\vec x+b$" group $(x^1,x^2)\rightarrow(ax^1,ax^2+b)$
preserves this geometry and acts
transitively on $\mathbb{R}^+\times\mathbb{R}$ so this also is a homogeneous geometry.
\item[($\mathcal{C}$)] $(M,\nabla)$ where $\nabla$ is the Levi-Civita connection of a metric of constant sectional curvature.
\end{enumerate}

\begin{remark}\label{R2.1}
\rm These are not exclusive possibilities (see \cite{BGG16}). One has that there are no non-flat surfaces which are both
Type~$\mathcal{A}$ and Type~$\mathcal{C}$. Moreover, a non-flat Type~$\mathcal{B}$ surface is also of 
Type~$\mathcal{A}$ if and only if $(C_{12}{}^1,C_{22}{}^1,C_{22}{}^2)=(0,0,0)$. Finally, a Type~$\mathcal{B}$ surface is also of 
Type~$\mathcal{C}$ if and onlyit is flat or affine isomorphic to a surface whose non-zero Christoffel symbols are
$(C_{11}{}^1,C_{12}{}^2,C_{22}{}^1)=(-1,-1,\pm1)$; the associated Type~$\mathcal{C}$ structure is given by 
$ds^2=(x^1)^{-2}\{(dx^1)^2\pm(dx^2)^2 \}$ and is either the hyperbolic upper half-plane or the Lorentzian analogue.
\end{remark}

The space $\mathfrak{K}(\mathcal{M})$ of affine Killing vector fields plays a role since, by Theorem~\ref{T1.1}, the eigenspaces
$E(\mu,\nabla)$ are modules over this Lie algebra. For an affine surface, $\dim\{\mathfrak{K}(\mathcal{M})\}\leq 6$ (see \cite{KN63}). 
If $\dim\{\mathfrak{K}(\mathcal{M})\}= 6$ then $\mathcal{M}$ is flat. In this case the quasi-Einstein equation becomes $\mathcal{H} f=0$. 
Let $(x^1,x^2)$ be local coordinates so that the Christoffel symbols vanish: $\Gamma_{ij}{}^k=0$. Then  
$E(\mu,\nabla)=E(0,\nabla)=\operatorname{span}\{1,x^1,x^2\}$ and $\dim\{E(\mu,\nabla)\}$ is maximum
 in this instance. Hence we shall assume that $\mathcal{M}$ is not flat henceforth. We shall examine the different possibilities for 
 $\dim\{\mathfrak{K}(\mathcal{M})\}$ seriatim.

We showed previously \cite{BGG16} that $\dim\{\mathfrak{K}(\mathcal{M})\}\in\{2,3,4\}$ if
$\mathcal{M}$ is a non-flat homogeneous surface. Affine surfaces with 
$\dim\{\mathfrak{K}(\mathcal{M})\}=4$ are modeled on Type~$\mathcal{A}$ surfaces and their 
Ricci tensor has rank one (see Theorem 3.11 in \cite{BGG16}). Any Type~$\mathcal{C}$ surface has 
$\dim\{\mathfrak{K}(\mathcal{M})\}=3$. In addition there are exactly other two classes of 
Type~$\mathcal{B}$ geometries with $\dim\{\mathfrak{K}(\mathcal{M})\}=3$  
(see Theorem 3.11 in \cite{BGG16}). The remaining cases correspond to the generic situation where $\dim\{\mathfrak{K}(\mathcal{M})\}=2$.
As we wish to describe the eigenspaces $E(\mu,\nabla)$ very explicitly,
we shall proceed rather combinatorially and rely on previous results \cite{BGG16,BGG16a,BGGV17b}
concerning different models for homogeneous affine surfaces.

\subsection{Linear versus affine equivalence}

Let $\mathcal{M}$ be a Type~$\mathcal{A}$ affine surface model. Any transformation  of the form 
$T(x^1,x^2)=(a^1_1 x^1 + a^1_2 x^2, a^2_1 x^1+a^2_2 x^2)$ where 
$(a_i^j)\in\operatorname{GL}(2,\mathbb{R})$  is an affine  isomorphism. 
We say that two Type~$\mathcal{A}$ surface models are \emph{linearly isomorphic} if
there exists $T\in\operatorname{GL}(2,\mathbb{R})$  intertwining the two structures.
One has (see Lemma 2.1 in \cite{BGG16a})  the following result.

\begin{lemma}\label{L7.1}
Let $\mathcal{M}_1$ and $\mathcal{M}_2$ be two Type~$\mathcal{A}$  surfaces  with $\dim\{\mathfrak{K}(\mathcal{M})\}\leq 3$.
Then $\mathcal{M}_1$ and $\mathcal{M}_2$ are affine isomorphic if and only if they are linearly isomorphic.
\end{lemma}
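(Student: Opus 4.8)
The forward implication is essentially a tautology: every $T\in\operatorname{GL}(2,\mathbb{R})$ is an affine map, and its pullback carries a connection with constant Christoffel symbols to another such (as noted in the paragraph preceding the lemma), hence is an affine isomorphism between Type~$\mathcal{A}$ models. The real content is the converse, and the plan is to show that an arbitrary affine isomorphism $\Psi:\mathcal{M}_1\to\mathcal{M}_2$ differs from a linear map only by a translation; since a translation $x\mapsto x+b$ fixes the (constant, translation invariant) Christoffel symbols of a Type~$\mathcal{A}$ model, this suffices.

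First I would invoke the standard fact that affine isomorphisms conjugate affine Killing fields (since $\Psi_*\nabla^1=\nabla^2$ and $\mathcal{L}_{\Psi_*X}(\Psi_*\nabla^1)=\Psi_*(\mathcal{L}_X\nabla^1)$), so that $\Psi_*\colon\mathfrak{K}(\mathcal{M}_1)\to\mathfrak{K}(\mathcal{M}_2)$ is a Lie algebra isomorphism; in particular $\dim\mathfrak{K}(\mathcal{M}_1)=\dim\mathfrak{K}(\mathcal{M}_2)$. Write $T_i=\operatorname{Span}\{\partial_{x^1},\partial_{x^2}\}\subset\mathfrak{K}(\mathcal{M}_i)$ for the abelian translation algebra of the Type~$\mathcal{A}$ model $\mathcal{M}_i$. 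Since $T_1$ integrates to the translation group, which acts simply transitively on $M_1=\mathbb{R}^2$, the subalgebra $\Psi_*T_1\subset\mathfrak{K}(\mathcal{M}_2)$ integrates to a group acting simply transitively on $M_2=\mathbb{R}^2$; in particular $\Psi_*T_1$ is a $2$-dimensional abelian subalgebra of $\mathfrak{K}(\mathcal{M}_2)$ no nonzero element of which has a zero (a zero would be a fixed point of the flow). The heart of the matter is the claim: for a non-flat Type~$\mathcal{A}$ surface $\mathcal{M}$ with $\dim\mathfrak{K}(\mathcal{M})\le 3$, the translation algebra $T=\operatorname{Span}\{\partial_{x^1},\partial_{x^2}\}$ is the \emph{only} $2$-dimensional abelian subalgebra of $\mathfrak{K}(\mathcal{M})$ no nonzero element of which has a zero. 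Granting this, $\Psi_*T_1=T_2$, so each $\Psi_*\partial_{x^i}$ is a constant-coefficient vector field on $M_2$, whence $\partial\Psi^j/\partial x^i$ is constant for all $i,j$ and $\Psi$ is an affine map $x\mapsto Ax+b$ with $A\in\operatorname{GL}(2,\mathbb{R})$; composing with the translation by $-b$ (an affine automorphism of $\mathcal{M}_2$) exhibits $A$ as a linear isomorphism $\mathcal{M}_1\to\mathcal{M}_2$.

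To prove the claim I would split on $d:=\dim\mathfrak{K}(\mathcal{M})$. If $d=2$ then $\mathfrak{K}(\mathcal{M})=T$ and there is nothing to show. If $d=3$, observe first that $\mathfrak{K}(\mathcal{M})$ is non-abelian, because an affine Killing field commuting with $\partial_{x^1}$ and $\partial_{x^2}$ has constant components and hence lies in $T$. I would then aim at the stronger statement $[\mathfrak{K}(\mathcal{M}),\mathfrak{K}(\mathcal{M})]=T$: this makes $T$ the commutator ideal, which is characteristic, and a dimension count shows that $T$ is then in fact the unique $2$-dimensional abelian subalgebra of the $3$-dimensional non-abelian algebra $\mathfrak{K}(\mathcal{M})$ (any $2$-dimensional $\mathfrak{a}\ne T$ meets $T$ in a line on which $\operatorname{ad}$ of a complementary vector acts invertibly, so $\mathfrak{a}$ is not abelian). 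To obtain $[\mathfrak{K},\mathfrak{K}]=T$ one must exclude a one-dimensional commutator ideal; if it occurred, a linear change of coordinates would normalize the third Killing field to a scalar multiple of $x^1\partial_{x^1}$ or of $x^1\partial_{x^2}$ modulo $T$, and solving $\mathcal{L}_Z\nabla=0$ with constant Christoffel symbols then forces the Ricci tensor to have rank at most one and produces an additional exponential affine Killing field, contradicting $d=3$. Here one leans on the explicit classification of homogeneous Type~$\mathcal{A}$ surfaces, and of the Ricci rank one case, from \cite{BGG16,BGG16a}.

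The main obstacle is exactly this last point---the $d=3$ analysis, namely excluding a one-dimensional commutator ideal (equivalently, showing that $T$ is characteristic inside $\mathfrak{K}(\mathcal{M})$). The remaining ingredients---that affine isomorphisms conjugate Killing fields, that a constant-coefficient conjugating diffeomorphism must be affine, and the final normalization by a translation---are routine; the real work is the Killing-field and curvature computation forcing the degeneracy, which is precisely what the classification of homogeneous Type~$\mathcal{A}$ geometries supplies.
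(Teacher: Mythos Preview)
The paper does not give a proof of this lemma; it is quoted from \cite{BGG16a} (Lemma~2.1 there), so there is no in-paper argument to compare against and you are supplying what the paper omits.

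Your strategy---push forward the translation algebra $T_1=\operatorname{Span}\{\partial_{x^1},\partial_{x^2}\}$ under $\Psi_*$, argue that it must coincide with the translation algebra $T_2$ of the target, and conclude that $\Psi$ has constant Jacobian and is therefore affine---is correct and is essentially the argument in \cite{BGG16a}. One simplification you overlooked: the case $d=\dim\mathfrak{K}=3$ is vacuous for Type~$\mathcal{A}$ models. By the classification in \cite{BGG16}, a non-flat Type~$\mathcal{A}$ surface has $d\in\{2,4\}$ (rank-two Ricci tensor forces $d=2$, rank-one forces $d=4$); indeed Section~\ref{S4} of the present paper, devoted to $d=3$, treats only Type~$\mathcal{B}$ and Type~$\mathcal{C}$. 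Hence under the hypothesis $d\le3$ one automatically has $\mathfrak{K}(\mathcal{M}_i)=T_i$, so $\Psi_*T_1=\mathfrak{K}(\mathcal{M}_2)=T_2$ is immediate and the proof collapses to two lines. The Lie-algebraic analysis you sketch for $d=3$ (excluding a one-dimensional commutator ideal, normalizing a third Killing field, and forcing $\operatorname{Rank}\rho\le1$) is unnecessary, and your treatment of that step was in any case only a sketch rather than a verified computation.
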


\begin{remark}\rm
There exist Type~$\mathcal{A}$ surfaces with $\dim\{\mathfrak{K}(\mathcal{M})\}=4$ which are not linearly equivalent
but which nevertheless are affine equivalent. We refer to the discussion in \cite{BGG16a} for further details.
\end{remark}

Let $\mathcal{M}=(\mathbb{R}^+\times\mathbb{R},\Gamma)$ be a Type~$\mathcal{B}$ affine surface model. The $a\vec x+b$ group preserves this geometry and two Type~$\mathcal{B}$ models $\mathcal{M}_1$ and $\mathcal{M}_2$ are said to be \emph{linearly isomorphic} if and only if there exists  an affine transformation of the form
$\Psi(x^1,x^2)=(x^1, a^2_1 x^1+ a^2_2 x^2)$ for $ a^2_2\ne0$ intertwining the two structures.

The following observation follows from the discussion  in \cite{BGG16}.
It is a non-trivial assertion as there are non-linear affine
transformations from one model to another if the dimension of the space of affine Killing vector fields is $3$-dimensional.
However, they play no role in defining the affine isomorphism type (see Lemma 3.17 and Theorem 3.21 in \cite{BGG16}).

\begin{lemma}\label{L2.4}
Let $\mathcal{M}_1$ and $\mathcal{M}_2$ be two Type~$\mathcal{B}$  surfaces which are not also of Type~$\mathcal{A}$, i.e. 
$\dim\{\mathfrak{K}(\mathcal{M})\}\leq 3$.
Then $\mathcal{M}_1$ and $\mathcal{M}_2$ are affine isomorphic if and only if they are linearly isomorphic.
\end{lemma}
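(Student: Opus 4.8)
The plan is to reduce the general affine-isomorphism question to a linear one by exploiting the structure of $\mathfrak{K}(\mathcal{M})$ for Type~$\mathcal{B}$ surfaces that are not of Type~$\mathcal{A}$. Since the ``if'' direction is immediate (a linear isomorphism is in particular an affine isomorphism), the content is the ``only if'' direction: given an affine isomorphism $\Psi\colon\mathcal{M}_1\to\mathcal{M}_2$, we must produce a \emph{linear} one, i.e. one of the restricted form $\Psi(x^1,x^2)=(x^1,a^2_1x^1+a^2_2x^2)$. First I would recall, as stated in the discussion preceding the lemma, that a non-flat Type~$\mathcal{B}$ surface is also of Type~$\mathcal{A}$ precisely when $(C_{12}{}^1,C_{22}{}^1,C_{22}{}^2)=(0,0,0)$; hence under our hypothesis $\dim\{\mathfrak{K}(\mathcal{M})\}\le 3$, and by Theorem~3.11 of \cite{BGG16} the surface falls into one of the finitely many normal forms catalogued there, for which $\mathfrak{K}(\mathcal{M})$ is explicitly known.

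The key step is to track how $\Psi$ acts on affine Killing vector fields: since $\Psi^*\nabla^2=\nabla^1$, conjugation by $\Psi$ carries $\mathfrak{K}(\mathcal{M}_2)$ isomorphically onto $\mathfrak{K}(\mathcal{M}_1)$ as Lie algebras. In each Type~$\mathcal{B}$ model the vector fields $x^1\partial_{x^1}+x^2\partial_{x^2}$ and $\partial_{x^2}$ span a distinguished non-Abelian subalgebra, and the coordinate $x^1$ (up to the obvious rescalings) is intrinsically characterized by the geometry — for instance as the affine function cutting out the locus where certain curvature invariants degenerate, or via the fixed structure of the isotropy. I would therefore argue that $\Psi$ must send the homothety direction to the homothety direction and the $\partial_{x^2}$-direction to itself, which forces $\Psi$ to preserve the first coordinate up to a positive scalar and to be affine-linear in $x^2$ over functions of $x^1$; combined with the requirement that $\Psi$ again produce a \emph{Type}~$\mathcal{B}$ model in the same coordinates, this collapses $\Psi$ to the stated linear form. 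For the cases with $\dim\{\mathfrak{K}(\mathcal{M})\}=3$, where genuinely non-linear affine maps between models exist, I would invoke Lemma~3.17 and Theorem~3.21 of \cite{BGG16} to replace any such $\Psi$ by a linear representative of the same isomorphism class, exactly as the remark preceding the lemma indicates.

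The main obstacle will be the $3$-dimensional cases: there one cannot simply assert that $\Psi$ is linear, because it need not be, so the argument must instead show that the affine \emph{equivalence class} of the model is already detected by linear equivalence. This requires a careful bookkeeping of the normal forms in \cite{BGG16}: one lists the Type~$\mathcal{B}$ models with $\dim\{\mathfrak{K}\}=3$ (there are, as noted in the excerpt, exactly two such classes of Type~$\mathcal{B}$ geometries beyond those also of Type~$\mathcal{C}$), checks that distinct linear classes remain distinct as affine classes, and conversely that within each affine class a linear normal form exists. I expect this to be the delicate part, as it leans entirely on the classification machinery rather than on a self-contained computation; the Type~$\mathcal{B}$-being-also-Type~$\mathcal{C}$ subtlety flagged in Remark~\ref{R2.1} would need to be handled separately, since there the ambient group is larger and one must verify that the extra symmetries do not introduce new affine identifications that are invisible to linear maps.
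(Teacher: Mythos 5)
Your proposal is correct and follows essentially the same route as the paper, which offers no independent proof but derives the lemma from the classification machinery of \cite{BGG16} (in particular Lemma~3.17 and Theorem~3.21 there) to dispose of the delicate $\dim\{\mathfrak{K}(\mathcal{M})\}=3$ case where genuinely non-linear affine maps exist. Your supplementary Lie-algebra argument for the generic case $\dim\{\mathfrak{K}(\mathcal{M})\}=2$ --- that $\Psi_*$ must preserve the derived subalgebra $\operatorname{Span}\{\partial_{x^2}\}$ and hence reduces, modulo the $a\vec x+b$ group, to the stated linear form --- is sound and is a reasonable self-contained addition, but it does not change the fact that the substance of the lemma rests on the cited classification, exactly as in the paper.
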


Any Type~$\mathcal{C}$ geometry which is not modeled on a Type~$\mathcal{B}$ geometry is modeled on $\mathbb{S}^2$.

\subsection{The affine quasi-Einstein equation for homogeneous surfaces}\label{S2.2}
Let $\mathcal{M}$ be a homogeneous surface.
If $\mathcal{M}$ is of Type~$\mathcal{C}$, then  $\nabla$ is the Levi-Civita connection of a metric $g$ of constant sectional curvature $c$ and thus strongly projectively flat.  Theorem \ref{T1.3} shows that
$E(0,\nabla)=\operatorname{Span}\{ 1\}$, and  $E(\mu,\nabla)=0$ for $\mu\neq -1 ,0$  in this case.
Since $\rho^g=c g$, the affine quasi-Einstein equation~\eqref{E1.a} with $\mu=-1$ reduces to the Obata's equation $\mathcal{H} f+ c f g=0$ \cite{Kanai, obata}. Solutions to the Obata's equation are given by the eigenfunctions corresponding to the first eigenvalue of the Laplacian.

Let $\mathcal{M}$ be a Type~$\mathcal{A}$ or Type~$\mathcal{B}$ surface model.
It is convenient to complexify and allow complex valued functions;
we set $E_{\mathbb{C}}(\mu,\nabla):=E(\mu,\nabla)\otimes_{\mathbb{R}}\mathbb{C}$.
 As we shall always take $\mu$ real and as the underlying structures are real, elements of $E(\mu,\nabla)$ may be obtained by taking the
real and imaginary parts of complex solutions. In \cite{BGGV17b}, we used the structure of 
$E_{\mathbb{C}}(\mu,\nabla)$ as
a module over the affine Killing vector fields to obtain information about the structure of the eigenspaces $E(\mu,\nabla)$ showing:

\begin{theorem}\label{T2.5}
\ \begin{enumerate}
\item Let $\mathcal{M}=(\mathbb{R}^2,\Gamma)$ be a Type~$\mathcal{A}$ surface model. 
We can choose a spanning set for $E_{\mathbb{C}}(\mu,\nabla)$
consisting of functions $e^{\alpha_1x^1+\alpha_2x^2}p(x^1,x^2)$ where $p$ is polynomial. Furthermore, in
this setting $e^{\alpha_1x^1+\alpha_2x^2}\partial_{x^i}p\in E(\mu,\nabla)$ for $i=1,2$.
\item Let $\mathcal{M}=(\mathbb{R}^+\times\mathbb{R},\Gamma)$ be a Type~$\mathcal{B}$ surface model. Let $\mu\in\mathbb{R}$.
We can expand any element of $E(\mu,\nabla)$ as a finite sum
$f=\sum_\alpha(x^1)^{\alpha}p_\alpha(\log(x^1),x^2)$ where $p_{\alpha}$ is polynomial.
\end{enumerate}
\end{theorem}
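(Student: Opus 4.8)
The plan is to leverage that, by Theorem~\ref{T1.1}, $E_{\mathbb{C}}(\mu,\nabla)$ is finite dimensional (of dimension at most $3$), and that the transitive groups listed in cases $(\mathcal{A})$ and $(\mathcal{B})$ act on this space by linear transformations. The asserted normal forms then drop out of elementary linear algebra together with a one-variable computation.

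\emph{Type $\mathcal{A}$.} Since the translation group preserves $\nabla$, one has $\tau^{*}E(\mu,\nabla)=E(\mu,\nabla)$ for every translation $\tau$; differentiating the two coordinate one-parameter subgroups (and using that $E(\mu,\nabla)$, being finite dimensional, is a closed subspace of $C^\infty$) shows that $\partial_{x^1}$ and $\partial_{x^2}$ restrict to commuting linear operators on $E_{\mathbb{C}}(\mu,\nabla)$. I would then invoke the simultaneous generalized eigenspace decomposition $E_{\mathbb{C}}(\mu,\nabla)=\bigoplus_{(\alpha_1,\alpha_2)}V_{(\alpha_1,\alpha_2)}$ of these two commuting operators over $\mathbb{C}$. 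For $f\in V_{(\alpha_1,\alpha_2)}$, the function $g:=e^{-\alpha_1x^1-\alpha_2x^2}f$ is annihilated by a high power of $\partial_{x^1}$ and of $\partial_{x^2}$, hence is a polynomial, so $f=e^{\alpha_1x^1+\alpha_2x^2}g$; a basis of each $V_{(\alpha_1,\alpha_2)}$ supplies the desired spanning set. For the last assertion, $\partial_{x^i}f$ and $\alpha_i f$ both lie in $E_{\mathbb{C}}(\mu,\nabla)$, hence so does $\partial_{x^i}f-\alpha_i f=e^{\alpha_1x^1+\alpha_2x^2}\partial_{x^i}p$, and real solutions are recovered by taking real and imaginary parts.

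\emph{Type $\mathcal{B}$.} Here I would use the one-parameter groups $\sigma_a:(x^1,x^2)\mapsto(ax^1,ax^2)$ and $\tau_b:(x^1,x^2)\mapsto(x^1,x^2+b)$, which are affine isomorphisms; as above, $X:=x^1\partial_{x^1}+x^2\partial_{x^2}$ and $Y:=\partial_{x^2}$ act linearly on $E_{\mathbb{C}}(\mu,\nabla)$. Since $[X,Y]=-Y$, this is a representation of the $2$-dimensional solvable Lie algebra, so by Lie's theorem it is simultaneously triangularizable over $\mathbb{C}$; comparing diagonal entries in $[X,Y]=-Y$ then forces $Y=\partial_{x^2}$ to be nilpotent, so every $f\in E_{\mathbb{C}}(\mu,\nabla)$ is a polynomial in $x^2$, say $f=\sum_{j\le d}f_j(x^1)(x^2)^j$. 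Next I would decompose $E_{\mathbb{C}}(\mu,\nabla)$ into generalized eigenspaces of $X$. The point is that $X$ respects the degree in $x^2$: $X\big(\sum_j f_j(x^1)(x^2)^j\big)=\sum_j\big((x^1\partial_{x^1}+j)f_j\big)(x^2)^j$, so if $(X-\beta)^Nf=0$ then $(x^1\partial_{x^1}+j-\beta)^Nf_j=0$ for each $j$. Substituting $x^1=e^t$ turns the latter into the constant-coefficient equation $(\partial_t+j-\beta)^N g=0$, whose solutions are $e^{(\beta-j)t}$ times a polynomial in $t$; hence $f_j(x^1)=(x^1)^{\beta-j}q_j(\log x^1)$ with $q_j$ polynomial. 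Collecting terms with a common exponent of $x^1$ and summing over the eigenvalues $\beta$ of $X$ yields the expansion $f=\sum_\alpha(x^1)^{\alpha}p_\alpha(\log(x^1),x^2)$.

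The routine parts — the simultaneous triangularization and the elementary ODE $(\partial_t+c)^Ng=0$ — I would not spell out. The one genuinely delicate point is the Type~$\mathcal{B}$ case, where $X$ and $\partial_{x^2}$ do not commute: one must first extract from Lie's theorem that $\partial_{x^2}$ acts nilpotently, and then observe that $X$ preserves the filtration of $E_{\mathbb{C}}(\mu,\nabla)$ by degree in $x^2$, which is what makes it possible to read off the $x^1$-dependence one homogeneous piece at a time. (Alternatively, the action of the affine Killing vector fields on $E_{\mathbb{C}}(\mu,\nabla)$ could be quoted directly from Theorem~\ref{T1.1} together with the local-to-global extension recorded in the Introduction, since the Type~$\mathcal{A}$ and Type~$\mathcal{B}$ models are simply connected.)
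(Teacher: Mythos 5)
Your argument is correct, and it is precisely the approach the paper indicates: the paper does not reproduce a proof of Theorem~\ref{T2.5} (it cites \cite{BGGV17b}), but describes the method as exploiting the structure of $E_{\mathbb{C}}(\mu,\nabla)$ as a finite-dimensional module over the affine Killing algebra, which is exactly what you do via the simultaneous generalized eigenspaces of the commuting operators $\partial_{x^1},\partial_{x^2}$ in Type~$\mathcal{A}$ and via Lie's theorem for the solvable pair $X=x^1\partial_{x^1}+x^2\partial_{x^2}$, $Y=\partial_{x^2}$ in Type~$\mathcal{B}$. Your handling of the two delicate points (nilpotency of $Y$ from $[X,Y]=-Y$, and reading off the $x^1$-dependence from the Euler operator one $x^2$-degree at a time) is sound.
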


Theorem~\ref{T2.5} gives the general form of a solution to the quasi-Einstein equation for Type~$\mathcal{A}$ and
Type~$\mathcal{B}$ surface models.  One can improve this generic result in the Type~$\mathcal{B}$ setting as follows:

\begin{theorem}\label{T2.6}
Let $\mathcal{M}$ be a non-flat Type~$\mathcal{B}$ surface model and let $f\in E(\mu,\nabla)$. Let  $c_i$, $c_{ij}$ be arbitrary constants.
\begin{enumerate}
\item We have one of the following possibilities:
\begin{enumerate}
\item Let $f(x^1,x^2)=(x^1)^\alpha\{(x^2)^2+2c_{12}x^1x^2+c_{11}(x^1)^2\}$. Then
$\mu=-1$ and $E(-1,\nabla)=\operatorname{Span}\{f,(x^1)^\alpha\{x^2+c_{12}x^1\},(x^1)^\alpha\}$.
\item Let $f(x^1,x^2)=(x^1)^\alpha\{x^2+c_1x^1\}$. Either
\begin{enumerate}
\item $E(\mu,\nabla)=\operatorname{Span}\{f,(x^1)^\alpha\}$.
\item $\mu=-1$ and $E(-1,\nabla)=\operatorname{Span}\{f,(x^1)^\alpha,(x^1)^\beta\}$ for $\beta\ne\alpha$ and $\beta\ne\alpha+1$.\
\end{enumerate}
\item Let $f(x^1,x^2)=(x^1)^\alpha\{x^2+c_1\log(x^1)x^1\}$ for $c_1\neq 0$. Then $\mu=-1$, and 
$
E(-1,\nabla)=\operatorname{Span}\{f,(x^1)^\alpha,(x^1)^{\alpha+1}\}\,.
$
\item Let $f(x^1,x^2)=f(x^1)$. We have the following possibilities:
\begin{enumerate}
\item $E(\mu,\nabla)=\operatorname{Span}\{(x^1)^\alpha\}$.
\item $E(\mu,\nabla)=\operatorname{Span}\{(x^1)^\alpha,(x^1)^\alpha\log(x^1)\}$.
\item $\mu=-1$ and $E(\mu,\nabla)=\operatorname{Span}\{(x^1)^\alpha,(x^1)^\alpha\log(x^1),(x^1)^\beta\}$ for $\alpha\ne\beta$.
\item $E(\mu,\nabla)=\operatorname{Span}\{(x^1)^\alpha,(x^1)^\beta\}$ for $\alpha\ne\beta$.
\item $\mu=-1$ and $E(\mu,\nabla)=\operatorname{Span}\{(x^1)^\alpha,(x^1)^\beta,(x^1)^\gamma\}$ for $\{\alpha,\beta,\gamma\}$
distinct.
\end{enumerate}
\end{enumerate}
\item The constants $c_i$, $c_{ij}$ are real in Assertion {\rm (1)}. In Assertion~{\rm (1d-iv)}, if $\alpha$ is complex, then $\beta=\bar\alpha$.
In Assertion~{\rm (1d-v)}, if $\alpha$ is complex, then we can choose the notation so $\beta=\bar\alpha$ and $\gamma$ is real.
In the remaining statements of Assertion~{\rm (1)}, $\alpha$ and $\beta$ are real.
\item If $(x^1)^{\alpha_1}\in E(\mu,\nabla)$ and $(x^1)^{\alpha_2}\in E(\mu,\nabla)$ for $\alpha_1\ne\alpha_2$, then 
\begin{eqnarray*}
&&C_{12}{}^1=0,\qquad C_{22}{}^1=0,\qquad C_{22}{}^2\mu=0,\\
&&\alpha_i^2-\alpha_i(1+C_{11}{}^1)-(C_{12}{}^2+C_{11}{}^1C_{12}{}^2-(C_{12}{}^2)^2)\mu=0\,.
\end{eqnarray*}
\end{enumerate}
\end{theorem}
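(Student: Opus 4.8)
The plan is to use the module structure from Theorem~\ref{T1.1}, together with the normal form of Theorem~\ref{T2.5}(2), to reduce the affine quasi-Einstein equation on a Type~$\mathcal{B}$ model to ordinary differential equations in $x^1$, and then to enumerate the possibilities.

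For a non-flat Type~$\mathcal{B}$ model one has $\Gamma_{ij}{}^k=(x^1)^{-1}C_{ij}{}^k$ with $C_{ij}{}^k$ constant, and a direct computation of the Ricci tensor gives $\rho_s=(x^1)^{-2}\tilde\rho_s$ for a constant symmetric matrix $\tilde\rho_s$ whose entries are explicit quadratic polynomials in the $C_{ij}{}^k$. Hence Equation~(\ref{E1.a}) becomes
\[
\partial_{x^i}\partial_{x^j}f-(x^1)^{-1}C_{ij}{}^k\partial_{x^k}f=\mu(x^1)^{-2}(\tilde\rho_s)_{ij}\,f,\qquad 1\le i\le j\le2.
\]
The vector field $\partial_{x^2}$ is an affine Killing vector field of every Type~$\mathcal{B}$ model, so by Theorem~\ref{T1.1} it preserves $E(\mu,\nabla)$; since Theorem~\ref{T2.5}(2) forces each $f\in E(\mu,\nabla)$ to be polynomial in $x^2$, the operator $\partial_{x^2}$ is nilpotent on $E(\mu,\nabla)$, and if $f$ has $x^2$-degree $d$ then $f,\partial_{x^2}f,\dots,\partial_{x^2}^df$ are linearly independent in $E(\mu,\nabla)$, whence $d\le2$ by the bound $\dim E(\mu,\nabla)\le3$ of Theorem~\ref{T1.1}. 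This organizes Assertion~(1) into the cases $d=2$ (which will give~(1a)), $d=1$ (which will give~(1b) and~(1c)), and $d=0$ (which will give~(1d)).

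In each case I would write $f$ with undetermined coefficients that are functions of $x^1$, substitute into the three component equations, and collect the powers of $x^2$ that occur. The resulting conditions are equidimensional (Euler-type) differential equations, so their solutions are spanned by powers $(x^1)^\alpha$ together with $\log(x^1)$-corrections in exactly the resonant cases; the second affine Killing vector field $x^1\partial_{x^1}+x^2\partial_{x^2}$, which also preserves $E(\mu,\nabla)$, can be used to normalize a leading coefficient to a pure power. Carrying the normalized ansatz through the remaining equations determines the lower-order coefficients, identifies when an extra power or a $\log(x^1)$ term is forced, shows that $\mu=-1$ in the degenerate branches, and yields the listed spanning sets; the bound $\dim E(\mu,\nabla)\le3$ then shows each list is complete. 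Assertion~(2) follows because the geometry is real: complex conjugation preserves $E_{\mathbb{C}}(\mu,\nabla)$ and sends a spanning element $(x^1)^\alpha p$ to $(x^1)^{\bar\alpha}\bar p$, so the linear independence of distinct powers, combined with the dimension counts, forces the reality of $c_i,c_{ij}$ and the conjugation patterns in~(1d-iv) and~(1d-v). For Assertion~(3), substituting $f=(x^1)^{\alpha_i}$ into the three equations and subtracting the two values $\alpha_1\neq\alpha_2$ in the $(1,2)$- and $(2,2)$-components gives $C_{12}{}^1=0$, $C_{22}{}^1=0$ and $\mu C_{22}{}^2=0$, after which the $(1,1)$-component, with $\tilde\rho_s$ simplified under these constraints, yields $\alpha_i^2-\alpha_i(1+C_{11}{}^1)-\mu\bigl(C_{12}{}^2+C_{11}{}^1C_{12}{}^2-(C_{12}{}^2)^2\bigr)=0$.

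The main obstacle is the exhaustive case analysis for Assertion~(1), particularly when $d=0$: one must keep track of all coincidence patterns among the roots of the indicial polynomials attached to the various components, decide in each branch which combinations of powers and $\log(x^1)$-terms are simultaneously compatible with all three equations, and handle the eigenvalue $\mu=-1$ separately throughout, since it is the only value for which an additional solution can occur. The cases $d=1,2$ are shorter but demand the same care to rule out spurious normal forms (for example, to see why the extra power in~(1b) must avoid $\beta=\alpha$ and $\beta=\alpha+1$) and to verify that the constants $c_i,c_{ij}$ may be chosen real.
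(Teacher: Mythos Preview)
Your proposal is correct and follows essentially the same approach as the paper: both use the Killing vector fields $Y=\partial_{x^2}$ and $X=x^1\partial_{x^1}+x^2\partial_{x^2}$ acting on $E(\mu,\nabla)$, the normal form of Theorem~\ref{T2.5}(2), and the dimension bound $\dim E(\mu,\nabla)\le3$ to organize the analysis by the $x^2$-degree $d\in\{0,1,2\}$; Assertion~(3) is handled identically by subtracting $\mathfrak{Q}_\mu((x^1)^{\alpha_i})$ for two distinct exponents. The only difference is one of execution: where you propose to substitute undetermined $x^1$-coefficients directly into the three PDE components and solve the resulting Euler equations, the paper instead exploits the module structure more algebraically---for instance, in the case $d=2$ it writes $Xf$ as a linear combination of $\{f,Yf,Y^2f\}$ to conclude that the leading coefficient $f_2(x^1)$ is an eigenvector of $X$, hence a pure power, and then iterates; in the case $d=0$ it uses the generalized eigenspace decomposition under $X$ and rules out $\log(x^1)^2$ terms via Theorem~\ref{T1.1} (such a function would vanish to second order at $x^1=1$), whereas your route avoids $\log^2$ automatically since the $(1,1)$-component is a second-order Euler equation.
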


\begin{remark}\rm In fact, all the possibilities of Assertion~(1)  in Theorem~\ref{T2.6} can occur.
	Assertion~(1a) is illustrated by Theorem \ref{T4.4} and by Theorem~\ref{T5.7}. Assertion~(1b) is illustrated by Theorem \ref{T4.4} and Theorem~\ref{T5.10}~(1). 
	Assertion~(1c) and Assertion~(1d) are illustrated by Theorem~\ref{T3.8} (see also Remark \ref{R3.5} for $\mu=0$ and $\alpha=0$). Every example where one has $\dim\{ E(\mu,\nabla)\}\ne0$ illustrates Assertion~(1e).
	
	Assertion~(3) is illustrated by Theorem~\ref{T3.8} where we present examples
	of Type~$\mathcal{B}$ surfaces which are also of Type~$\mathcal{A}$.
	Suppose that $E(\mu,\nabla)=\operatorname{Span}\{(x^1)^\alpha,(x^1)^\beta\}$ for $\alpha\ne\beta$. We can either have $\alpha$ and $\beta$ real
	or $\beta=\bar\alpha$; in this latter instance, real solutions can be obtained by taking the real and imaginary parts of $(x^1)^\alpha$.
	Assertion~(3) is also illustrated by  Theorem~\ref{T5.6}~(4) where we have $E(0,\nabla)=\operatorname{Span}\{1,(x^1)^{C_{11}{}^1+1}\}$ 
	and $C_{22}{}^2\ne0$.
\end{remark}

\begin{proof}
Let $X:=x^1\partial_{x^1}+x^2\partial_{x^2}$ and $Y:=\partial_{x^2}$. As $X$ and $Y$ are affine
Killing vector fields, $X\cdot E(\mu,\nabla)\subset E(\mu,\nabla)$ and $Y\cdot E(\mu,\nabla)\subset E(\mu,\nabla)$. 
By Theorem~\ref{T1.1}, $\dim\{E(\mu,\nabla)\}\le3$.  Since $\mathcal{M}$ is non-flat, by Theorem~\ref{T1.2}, $\dim\{E(\mu,\nabla)\}=3$
if and only if $\mu=-1$ and $\mathcal{M}$ is strongly projectively flat.
Let $f\in E(\mu,\nabla)$. By Theorem~\ref{T2.5}~(2), we may expand
$$
f=\sum_{\alpha,i,j}c_{\alpha,i,j}(x^1)^\alpha(x^2)^i\log(x^1)^j\,.
$$Choose $i_0$ maximal so there exists $f\in E(\mu,\nabla)$ with
$c_{\alpha,i_0,j}(f)\ne0$ for some $\alpha$ and some $j$. Then $f=\sum_{i\le i_0}f_i(x^1)(x^2)^i$ and $f_{i_0}(x^1)\ne0$. We have
\begin{equation}\label{E2.aa}\begin{array}{l}
Xf=\displaystyle\sum_{\alpha,i,j}c_{\alpha,i,j}(x^1)^\alpha(x^2)^i\{(\alpha+i)\log(x^1)^j+j\log(x^1)^{j-1}\}\in E(\mu,\nabla),\\[0.05in]
Yf=\displaystyle\sum_{\alpha,i,j}ic_{\alpha,i,j}(x^1)^\alpha(x^2)^{i-1}\log(x^1)^j\in E(\mu,\nabla).
\end{array}\end{equation}
We establish Assertion~1 by checking various cases. 

\subsection*{Case 1.} Suppose $i_0 \ge2$.  By Equation~(\ref{E2.aa}),
$\{f,Yf,\dots,Y^if\}$ are $i+1$ linearly independent elements of $E(\mu,\nabla)$. As $\dim\{E(\mu,\nabla)\}\le3$,
$i_0=2$ and
\begin{equation}\label{E2.a}
\begin{array}{l}
E(\mu,\nabla)=\operatorname{Span}\{f=f_2(x^1)(x^2)^2+f_1(x^1)x^2+f_0(x^1),\\[0.05in]
\qquad\qquad\qquad\quad Yf=2f_2(x^1)x^2+f_1(x^1),\ \ Y^2f=2f_2(x^1)\}\,.
\end{array}\end{equation}
Since $Xf=((X+2)f_2(x^1))(x^2)^2+((X+1)f_1(x^1))x^2+Xf_0(x^1)$
 is a linear combination of $\{f,Yf, Y^2f\}$, $(X+2)f_2$ is a multiple of $f_2$ so
we may assume $f_2(x^1)=(x^1)^\alpha$ for some $\alpha$. Consequently, $Y^2f=2(x^1)^\alpha\in E(\mu,\nabla)$. We have
$$
Y(X-\alpha-2)f=(X-\alpha-1)f_1 (x^1)\in E(\mu,\nabla)\,.
$$
This must be a multiple of $(x^1)^\alpha$ and hence 
$f_1(x^1)=c_1(x^1)^\alpha+2c_{12}(x^1)^{\alpha+1}$.
We subtract $\frac12c_1Yf$ to ensure that $c_1=0$ and express
$$
f=(x^1)^\alpha\{(x^2)^2+2c_{12}x^1x^2\}+f_0(x^1)\,.
$$
We have $(X-\alpha-2)f=(X-\alpha-2)f_0$. Thus $(X-\alpha-2)f_0$ is a multiple of $(x^1)^\alpha$. 
Consequently, $f_0(x^1)=c_{11}(x^1)^{\alpha+2}+c_3(x^1)^\alpha$. Subtract an appropriate multiple of $(x^1)^\alpha$
to assume $c_3=0$ and obtain $f=(x^1)^\alpha\{(x^2)^2+2c_{12}x^1x^2+c_{11}(x^1)^2\}$. This is the possibility of Assertion~(1a).

\subsection*{Case 2} Assume $i_0=1$ so there exists a function 
$f=f_1(x^1)x^2+f_0(x^1)\in E(\mu,\nabla)$ which is linear in $x^2$. Suppose $\{f_1,Xf_1\}$ are linearly independent.
Then
$$\begin{array}{lll}
f=f_1(x^1)x^2+f_0(x^1),&Xf=((X+1)f_1(x^1))x^2+Xf_0(x^1),\\[0.05in]
Yf=f_1(x^1),&YXf=(X+1)f_1(x^1)
\end{array}$$
are 4 linearly independent elements of $E(\mu,\nabla)$; this is false. Thus $Xf_1$ is a multiple of $f_1$ so we may assume
$f_1=(x^1)^\alpha$ for some $\alpha$. Consequently $f=(x^1)^\alpha x^2+f_0(x^1)$ so
$(x^1)^\alpha=Yf\in E(\mu,\nabla)$.
Then $(X-1-\alpha)f=(X-1-\alpha)f_0$.
Several cases present themselves.
\begin{enumerate}
\item $E(\mu,\nabla)=\operatorname{Span}\{f=(x^1)^\alpha x^2+f_0(x^1),(x^1)^\alpha\}$. We have that
 $(X-1-\alpha)f_0$ is
a multiple of $(x^1)^\alpha$. This shows that
$f_0(x^1)=c(x^1)^{\alpha+1}+c_1(x^1)^\alpha$. By subtracting a multiple of $(x^1)^\alpha$,
we obtain the form of Assertion~(1b-i)
$f=(x^1)^\alpha x^2+c_1(x^1)^{\alpha+1}=(x^1)^\alpha\{x^2+c_1x^1\}$.
\item $E(\mu,\nabla)=\operatorname{Span}\{f,(x^1)^\alpha,(x^1)^\beta\}$ for $\alpha\ne\beta$ and $\beta\ne\alpha+1$.
We now have $(X-\alpha-1)f_0=d_1x^\alpha+d_2x^\beta$ implies $f_0=c_1(x^1)^{\alpha+1}+\tilde d_1x^\alpha+\tilde d_2x^\beta$.
By subtracting appropriate multiples of $(x^1)^\beta$ and $(x^1)^\alpha$, we may assume that $\tilde d_i=0$ and $f=(x^\alpha)(x^2+c_1x^1)$ and obtain Assertion~(1b-ii).
\item 
$E(\mu,\nabla)=\operatorname{Span}\{f,(x^1)^\alpha,(x^1)^\beta\}$ for $\alpha\ne\beta$ and
 $\beta=\alpha+1$. We now obtain $(X-\alpha-1)f_0$ is a linear combination of $(x^1)^\alpha$ and $(x^1)^{\alpha+1}$.
Thus
$f_0=d_1(x^1)^\alpha+d_2(x^1)^{\alpha+1}+c_1\log(x^1)(x^1)^{\alpha+1}$ and we obtain Assertion~(1c).
\end{enumerate}

\smallbreak\noindent{\bf Case 3.} We have $i_0=0$ so every function in $E(\mu,\nabla)$ only depends on $x^1$. 
We may decompose $E(\mu,\lambda)$
as a direct sum of the generalized eigenspaces of $X$; a function $f$ belongs to such an eigenspace if and only
if $(X-\alpha)^3f=0$ for some $\alpha\in\mathbb{C}$. Such a function satisfies
$f=(x^1)^\alpha\{c_0+c_1\log(x^1)+c_2\log(x^1)^2\}$.
Suppose $c_2\ne0$. Since $(X-\alpha)^2f=2c_2(x^1)^\alpha$, we see $(x^1)^\alpha\in E(\mu,\nabla)$. Thus renormalizing,
we obtain $(x^1)^\alpha\{c_1\log(x^1)+c_2\log(x^1)^2\}\in E(\mu,\nabla)$. A similar argument, shows 
$c_2(x^1)^\alpha\log(x^1)\in E(\mu,\nabla)$ and thus finally $c_2\log(x^1)^2\in E(\mu,\nabla)$. This is not possible as this
function vanishes to second order at $x=1$ and that violates Theorem~\ref{T1.1}. Thus we have $c_2=0$ and
$E(\mu,\nabla)$ is the span of functions of the form $(x^1)^\alpha$ or $(x^1)^\alpha\log(x^1)$. This establishes Assertion~(1).

Assertion~(2) follows from the arguments we gave to establish Assertion~(1).  
Since Equation~(\ref{E1.a}) is a real PDE, we can take the
complex conjugate of any solution. If $\alpha\in\mathbb{C}-\mathbb{R}$, then taking the real and imaginary part of $(x^1)^\alpha$ and
the solutions in Assertions~(1) would give too many solutions except in the instances noted; for the same reason, the constants
must be real.

To prove Assertion~(3), suppose $(x^1)^\alpha\in E(\mu,\nabla)$ and $(x^1)^\beta\in E(\mu,\nabla)$ where $\alpha\ne\beta$. 
We introduce the affine quasi-Einstein operator $\mathfrak{Q}_\mu(f):=\mathcal{H}f-\mu\, f\rho_s$
for any $f\in C^\infty(M)$ (see \cite{BGGV17b}) and 
compute
\begin{eqnarray*}
&&(x^1)^{2-\alpha}\mathfrak{Q}_\mu ((x^1)^\alpha)-(x^1)^{2-\beta}\mathfrak{Q}_\mu ((x^1)^\beta)\\
&=&(\alpha-\beta)\left(\begin{array}{cc}-1+\alpha+\beta-C_{11}{}^1&-C_{12}{}^1\\-C_{12}{}^1&-C_{22}{}^1\end{array}\right)\,.
\end{eqnarray*}
This implies $C_{12}{}^1=C_{22}{}^1=0$. We impose these relations and set $\mathfrak{Q}_\mu ((x^1)^\alpha)=0$ to obtain the final two relations.
\end{proof}

\section{Homogeneous surfaces with $\dim\{\mathfrak{K}(\mathcal{M})\}=4$}\label{S3}

Results in \cite{BGG16} show that a surface has  $\dim\{\mathfrak{K}(\mathcal{M})\}=4$ if and only if $\mathcal{M}$ is 
strongly projectively flat and recurrent with Ricci tensor of rank one. Moreover, there exist representatives of 
Type~$\mathcal{A}$ for all affine isomorphic classes, i.e.
any surface of Type~$\mathcal{B}$ with $\dim\{\mathfrak{K}(\mathcal{M})\}=4$ is locally isomorphic to a Type~$\mathcal{A}$ surface
with $\dim\{\mathfrak{K}(\mathcal{M})\}=4$. We treat the $\mathcal{A}$ and the $\mathcal{B}$ geometries separately since they
give rise to different local structures.

\subsection{Type~$\mathcal{A}$ surface models with $\dim\{\mathfrak{K}(\mathcal{M})\}=4$}
In this section, we shall assume that $\mathcal{M}$ is a homogeneous affine surface 
of Type~$\mathcal{A}$; $\dim\{\mathfrak{K}(\mathcal{M})\}=4$ if and only if $\operatorname{Rank}\{\rho\}=1$.
We impose this condition hence forth and make a linear change of coordinates to ensure $\rho=\rho_{22}dx^2\otimes dx^2$.

\begin{definition}\label{D3.1}
Define the following  Type~$\mathcal{A}$ surface models with $\rho=\rho_{22}dx^2\otimes dx^2$:
\medbreak\quad $\mathcal{M}_1$:
$\Gamma_{11}{}^1=-1$, $\Gamma_{11}{}^2=0$, $\Gamma_{12}{}^1=1$, $\Gamma_{12}{}^2=0$, 
$\Gamma_{22}{}^1=0$, $\Gamma_{22}{}^2=2$, $\rho_{22}=1$.
\smallbreak\quad $\mathcal{M}_2^c$:
$\Gamma_{11}{}^1=-1$, $\Gamma_{11}{}^2=0$, 
$\Gamma_{12}{}^1=c$, $\Gamma_{12}{}^2=0$, $\Gamma_{22}{}^1=0$,
$\Gamma_{22}{}^2=1+2c$,\smallbreak\qquad\quad\   $\rho_{22}=c^2+c\ne0$.
\smallbreak\quad $\mathcal{M}_3^c$: $\Gamma_{11}{}^1=0$, $\Gamma_{11}{}^2=0$,
$\Gamma_{12}{}^1=c$, $\Gamma_{12}{}^2=0$, 
$\Gamma_{22}{}^1=0$,
$\Gamma_{22}{}^2=1+2c$,\smallbreak\qquad\quad\  $\rho_{22}=c^2+c\ne0$.
\smallbreak\quad $\mathcal{M}_4^c$: $\Gamma_{11}{}^1=0$, 
$\Gamma_{11}{}^2=0$, $\Gamma_{12}{}^1=1$,
$\Gamma_{12}{}^2=0$, $\Gamma_{22}{}^1=c$, $\Gamma_{22}{}^2=2$, $\rho_{22}=1$. 
\smallbreak\quad $\mathcal{M}_5^c:\Gamma_{11}{}^1=-1$, 
$\Gamma_{11}{}^2=0$, $\Gamma_{12}{}^1=c$,
$\Gamma_{12}{}^2=0$,
$\Gamma_{22}{}^1=-1$, $\Gamma_{22}{}^2=2c$,\smallbreak\qquad\quad\    $\rho_{22}=1+c^2$. 
\end{definition}

\begin{remark}
\rm
The models $\mathcal{M}_2^c$ and $\mathcal{M}_3^c$ are affine isomorphic and the models $\mathcal{M}_1$ and $\mathcal{M}_4^c$ are affine isomorphic for any value of $c$.
	On the other hand, no surface in one family of Definition~\ref{D3.1} is linearly isomorphic to a surface in another family \cite{BGG16}. 
	There are, however, some linear isomorphisms amongst the surfaces in each family. 
	For example, one can rescale $x^1$ to see that $\mathcal{M}_4^c$ and $\mathcal{M}_4^{\tilde c}$ are
	linearly isomorphic if $c\ne0$ and $\tilde c\ne0$. Furthermore, $\mathcal{M}_5^c$ and $\mathcal{M}_5^{-c}$ are linearly isomorphic (see \cite{RBGGPV18}).
\end{remark}

We have the following classification result \cite{BGG16}:
\begin{theorem}
Let $\mathcal{M}$ be a homogeneous Type~$\mathcal{A}$ surface model with $\dim\{\mathfrak{K}(\mathcal{M})\}=4$. 
Then $\mathcal{M}$ is affine isomorphic to $\mathcal{M}_1$, a $\mathcal{M}_2^c$ model or  a $\mathcal{M}_5^c$ model.
\end{theorem}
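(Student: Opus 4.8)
The plan is to turn the classification into the enumeration of the orbits of a residual linear group acting on an explicit algebraic variety. Because every Christoffel symbol of a Type~$\mathcal{A}$ model is constant, the curvature is purely quadratic and the Ricci tensor is
$$
\rho_{jk}=\Gamma_{jk}{}^{m}\Gamma_{\ell m}{}^{\ell}-\Gamma_{\ell k}{}^{m}\Gamma_{jm}{}^{\ell},
$$
a fixed homogeneous quadratic map of the six parameters $\Gamma_{11}{}^{1},\Gamma_{11}{}^{2},\Gamma_{12}{}^{1},\Gamma_{12}{}^{2},\Gamma_{22}{}^{1},\Gamma_{22}{}^{2}$. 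We have already used the equivalence $\dim\{\mathfrak{K}(\mathcal{M})\}=4\Leftrightarrow\operatorname{Rank}\{\rho\}=1$ and the accompanying linear change of coordinates to normalize $\rho=\rho_{22}\,dx^{2}\otimes dx^{2}$ with $\rho_{22}\ne0$. Thus the models under consideration correspond exactly to the points of the affine variety in $\mathbb{R}^{6}$ cut out by $\rho_{11}=\rho_{12}=\rho_{21}=0$ together with the open condition $\rho_{22}\ne0$.

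First I would record the residual symmetry. A transformation $T\in\operatorname{GL}(2,\mathbb{R})$ carries one normalized model to another precisely when $T^{*}\rho$ is again a multiple of $dx^{2}\otimes dx^{2}$, i.e.\ when $T$ preserves $\ker\{\rho\}=\operatorname{span}\{\partial_{x^{1}}\}$; these $T$ form a $3$-dimensional subgroup $\mathcal{G}\subset\operatorname{GL}(2,\mathbb{R})$ (those for which $\partial_{x^{1}}$ is an eigenvector), generated by rescalings of $x^{1}$, rescalings of $x^{2}$, and the shear adding a multiple of $x^{1}$ to $x^{2}$. Two normalized models related by an element of $\mathcal{G}$ are affine isomorphic. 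The converse \emph{fails} in this setting, which is exactly why the final list must have fewer entries than the number of $\mathcal{G}$-orbits (cf.\ the Remark following Definition~\ref{D3.1}).

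The core of the argument is then to solve $\rho_{11}=\rho_{12}=\rho_{21}=0$ on the open set $\rho_{22}\ne0$ and to enumerate the $\mathcal{G}$-orbits. I would first combine a scaling and the shear in $\mathcal{G}$ with the equations $\rho_{11}=0$ and $\rho_{21}=0$ to reduce to the case $\Gamma_{11}{}^{2}=\Gamma_{12}{}^{2}=0$, which is the common feature of all five models of Definition~\ref{D3.1}; the surviving equations then involve only $\Gamma_{11}{}^{1},\Gamma_{12}{}^{1},\Gamma_{22}{}^{1},\Gamma_{22}{}^{2}$ and factor. Branching on whether $\Gamma_{22}{}^{1}=0$ and on whether $\Gamma_{11}{}^{1}=0$, and using the leftover rescalings of $x^{1}$ and $x^{2}$ to bring a nonzero $\Gamma_{22}{}^{1}$ to $-1$ and to normalize $\Gamma_{11}{}^{1}\in\{0,-1\}$, one checks that each branch is linearly isomorphic to precisely one of $\mathcal{M}_{1}$, $\mathcal{M}_{2}^{c}$, $\mathcal{M}_{3}^{c}$, $\mathcal{M}_{4}^{c}$, $\mathcal{M}_{5}^{c}$, with $c$ a genuine modulus within each family. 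Finally, the affine (generally non-linear) isomorphisms $\mathcal{M}_{2}^{c}\cong\mathcal{M}_{3}^{c}$ and $\mathcal{M}_{1}\cong\mathcal{M}_{4}^{c}$ recorded in the Remark (see \cite{BGG16,BGG16a}) collapse the five families to the three asserted ones.

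I expect the main obstacle to be the case analysis of the second step: although the quadratic system is small, it branches several times, and at each stage one must verify that no solution is dropped and that the residual $\mathcal{G}$-action really lands in one of the listed normal forms without over-identifying the modulus $c$. The secondary point requiring genuine (if short) computation is exhibiting the two non-linear affine isomorphisms merging $\mathcal{M}_{3}^{c}$ with $\mathcal{M}_{2}^{c}$ and $\mathcal{M}_{4}^{c}$ with $\mathcal{M}_{1}$; it is precisely these maps that make the statement an assertion about affine, and not merely linear, isomorphism.
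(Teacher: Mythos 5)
The paper does not prove this statement; it is quoted from \cite{BGG16}, so there is no in-text argument to measure yours against, and I am assessing the proposal on its own. Your overall plan --- normalize $\rho=\rho_{22}\,dx^2\otimes dx^2$, solve the rank-one equations on the Christoffel symbols, classify orbits of the residual subgroup $\mathcal{G}\subset\operatorname{GL}(2,\mathbb{R})$, and then merge families by non-linear affine maps --- is viable and is essentially the route of \cite{BGG16}. But three steps are either wrong as stated or missing. First, the residual group and the first reduction are misdescribed. The stabilizer of the normalization is $\{(x^1,x^2)\mapsto(\alpha x^1+\beta x^2,\delta x^2)\}$; its shear adds a multiple of $x^2$ to $x^1$, whereas the map $(x^1,x^2)\mapsto(x^1,x^2+kx^1)$ you describe does not preserve $\rho\propto dx^2\otimes dx^2$ and is inconsistent with your own characterization of $\mathcal{G}$ as the maps fixing $\operatorname{span}\{\partial_{x^1}\}$. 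Under the correct $\mathcal{G}$ one has $\Gamma_{11}{}^2\mapsto(\delta/\alpha^2)\,\Gamma_{11}{}^2$, so no group element can produce $\Gamma_{11}{}^2=0$: the reduction to $\Gamma_{11}{}^2=\Gamma_{12}{}^2=0$ must come purely from the equations. It does --- from $\rho_{11}=\Gamma_{11}{}^1\Gamma_{12}{}^2-\Gamma_{11}{}^2\Gamma_{12}{}^1+\Gamma_{11}{}^2\Gamma_{22}{}^2-(\Gamma_{12}{}^2)^2$, $\rho_{12}=\Gamma_{12}{}^1\Gamma_{12}{}^2-\Gamma_{11}{}^2\Gamma_{22}{}^1$ and $\rho_{22}=\Gamma_{22}{}^1(\Gamma_{11}{}^1-\Gamma_{12}{}^2)+\Gamma_{12}{}^1(\Gamma_{22}{}^2-\Gamma_{12}{}^1)$ one checks that $\rho_{11}=\rho_{12}=0$ together with $(\Gamma_{11}{}^2,\Gamma_{12}{}^2)\ne(0,0)$ forces $\rho_{22}=0$ --- but this is a statement about the variety, not about the group action, and your sketch attributes it to the wrong mechanism.

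Second, the case analysis is only asserted, and it is more delicate than ``each branch is linearly isomorphic to precisely one of the five models.'' Once $\Gamma_{11}{}^1$ is normalized to $-1$ the sign of $\alpha$ is spent, so for $\Gamma_{22}{}^1\ne0$ the diagonal subgroup only rescales $\Gamma_{22}{}^1$ by a positive factor, and the normal form $(\Gamma_{11}{}^1,\Gamma_{12}{}^1,\Gamma_{22}{}^1,\Gamma_{22}{}^2)=(-1,c,+1,2c)$ appears; it is not on your list, and identifying it with an $\mathcal{M}_2^{c'}$ requires a further shear with $\beta=\pm\alpha$ that simultaneously annihilates $\Gamma_{22}{}^1$. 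Missing this either adds a spurious sixth family or, worse, a purported new affine class. Third, the non-linear isomorphisms $\mathcal{M}_3^c\cong\mathcal{M}_2^c$ and $\mathcal{M}_4^c\cong\mathcal{M}_1$ carry the entire ``affine rather than linear'' content of the theorem and are nowhere exhibited. A cleaner way to close the last two gaps is the one the paper itself uses in the proof of Lemma~\ref{L3.4}: by Theorem~3.8 of \cite{BGG16} the pair $(\alpha_X,\varepsilon_X)$ with $\alpha_X=\nabla\rho(X,X;X)^2\rho(X,X)^{-3}$ is a complete affine invariant for rank-one Type~$\mathcal{A}$ models; in your normalization, taking $X=\partial_{x^2}$ gives $\alpha_X=4(\Gamma_{22}{}^2)^2/\rho_{22}$, which equals $16$ on $\mathcal{M}_1$, $16+4(c^2+c)^{-1}$ on $\mathcal{M}_2^c$ and $16-16(1+c^2)^{-1}$ on $\mathcal{M}_5^c$. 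These images exhaust all admissible values $\{\alpha\ge0,\ \varepsilon=+1\}\cup\{\alpha\le0,\ \varepsilon=-1\}$, and the theorem follows with no orbit analysis at all.
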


Since we assume $\rho_s\neq 0$,
one has $1\in E(\mu,\nabla)$ if and only if $\mu=0$. Moreover, $1\in E(0,\nabla)$ for any affine surface and thus $\dim\{E( 0,\nabla)\}\geq 1$.

\begin{lemma}\label{L3.4}
Let $\mathcal{M}$ be a  surface with $\dim\{\mathfrak{K}(\mathcal{M})\}=4$. Then $\dim\{E(0,\nabla)\}\geq 2$ if and only if one of the following possibilities pertains:
\begin{enumerate}
\item $\mathcal{M}$ is affine isomorphic to $\mathcal{M}_5^{\pm\frac{1}{2}}$, and $E(0,\nabla)=\operatorname{Span}\{1,e^{x^1}\}$.
\smallbreak
\item $\mathcal{M}$ is affine isomorphic to $\mathcal{M}_5^0$, or to $\mathcal{M}_2^{-\frac{1}{2}}$, and
$E(0,\nabla)=\operatorname{Span}\{1,x^1\}$. 
\end{enumerate}
\end{lemma}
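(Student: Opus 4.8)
The plan is to reduce to the classification of homogeneous Type~$\mathcal{A}$ surfaces with $\dim\{\mathfrak{K}(\mathcal{M})\}=4$ and then to solve $\mathcal{H}f=0$ on each normal form. Since $\dim\{\mathfrak{K}(\mathcal{M})\}=4$ forces $\mathcal{M}$ to be non-flat, strongly projectively flat and recurrent with $\operatorname{Rank}\{\rho\}=1$ and, up to affine isomorphism, of Type~$\mathcal{A}$, the classification recalled above shows that $\mathcal{M}$ is affine isomorphic to $\mathcal{M}_1$, to some $\mathcal{M}_2^c$, or to some $\mathcal{M}_5^c$. An affine isomorphism $\Psi$ satisfies $\Psi^*\{E(0,\nabla^2)\}=E(0,\nabla^1)$, so $\dim\{E(0,\nabla)\}$ and the isomorphism type of $E(0,\nabla)$ are affine invariants; it therefore suffices to compute $E(0,\nabla)$ on each normal form and then transport the answer along the relevant affine isomorphisms, including the non-linear ones available when $\dim\{\mathfrak{K}(\mathcal{M})\}=4$, to obtain the normalized bases $\{1,e^{x^1}\}$ and $\{1,x^1\}$. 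Because $\mathcal{M}$ is non-flat and strongly projectively flat, Theorem~\ref{T1.2} gives $\dim\{E(0,\nabla)\}\neq3$; as $1\in E(0,\nabla)$ always, this leaves $\dim\{E(0,\nabla)\}\in\{1,2\}$, so the claim $\dim\{E(0,\nabla)\}\ge2$ amounts to the existence of a single non-constant solution of $\mathcal{H}f=0$.

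To do the computation, observe that for $\mu=0$ the affine quasi-Einstein equation is just $\nabla^2f=0$, i.e.\ the three scalar equations $\partial_1^2f=\Gamma_{11}{}^k\partial_kf$, $\partial_1\partial_2f=\Gamma_{12}{}^k\partial_kf$, $\partial_2^2f=\Gamma_{22}{}^k\partial_kf$ with constant Christoffel symbols. For each normal form I would first integrate the scalar equation involving only $\partial_1f$ (or only $\partial_2f$): it is a linear ODE, so its general solution separates as $f=\varphi(x^2)e^{\lambda x^1}+\psi(x^2)$, and substituting this into the remaining two equations forces $\varphi$ and $\psi$ to satisfy linear ODEs and kills all but one of the integration constants. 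One always recovers $1\in E(0,\nabla)$, and the computation exhibits at most one further solution. Alternatively, Theorem~\ref{T2.5}(1) provides a spanning set $e^{\alpha_1x^1+\alpha_2x^2}p(x^1,x^2)$ with $p$ polynomial, and one matches coefficients; or one exploits that $E(0,\nabla)$ is a module over the $4$-dimensional Lie algebra $\mathfrak{K}(\mathcal{M})$ (Theorem~\ref{T1.1}), as in the proof of Theorem~\ref{T2.6}. For the isolated model $\mathcal{M}_1$ and for the exceptional members of the two families this is a short direct calculation yielding exactly the spaces stated in the lemma.

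The main obstacle is the two one-parameter families $\mathcal{M}_2^c$ and $\mathcal{M}_5^c$, where the solution space depends on the parameter. Running the integration above with $c$ kept symbolic, the exponent of the candidate second solution is a root of an indicial/characteristic equation depending on $c$, and that candidate actually satisfies all three equations only when the root obeys a compatibility relation; this relation fails for generic $c$ — forcing $E(0,\nabla)=\operatorname{Span}\{1\}$ — and holds only at the resonant values $c\in\{0,\pm\frac12\}$ for $\mathcal{M}_5^c$ and $c=-\frac12$ for $\mathcal{M}_2^c$. When the surviving exponent is nonzero the second solution is an exponential, which after normalizing coordinates gives the basis $\{1,e^{x^1}\}$ of clause~(1); when that exponent collides with the exponent $0$ of the constant, the second solution degenerates to a linear function, giving the basis $\{1,x^1\}$ of clause~(2). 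It remains to check that the surfaces appearing in each clause are mutually affine isomorphic to the listed representatives (using Lemma~\ref{L7.1} and the explicit affine isomorphisms in the remarks of Section~\ref{S3}) and to record the bases; assembling the case analysis then produces the asserted equivalence.
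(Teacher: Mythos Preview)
There is a genuine gap in your proposal. You assert that running the Hessian equations on the normal forms $\mathcal{M}_2^c$ and $\mathcal{M}_5^c$ with symbolic $c$ produces a ``compatibility relation'' that fails for generic $c$ and singles out $c\in\{0,\pm\tfrac12\}$ (resp.\ $c=-\tfrac12$). This is not what happens. On $\mathcal{M}_5^c$ the function $f=e^{2cx^2}$ solves $\mathcal{H}f=0$ for \emph{every} $c$: with $\partial_1f=0$ and $\partial_2f=2cf$ one has $H_{11}f=0-(-1)\cdot0-0\cdot2cf=0$, $H_{12}f=0-c\cdot0-0\cdot2cf=0$, and $H_{22}f=4c^2f-(-1)\cdot0-2c\cdot2cf=0$. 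Likewise $e^{2x^2}\in E(0,\nabla)$ on $\mathcal{M}_1$ and $e^{(2c+1)x^2}\in E(0,\nabla)$ on $\mathcal{M}_2^c$ for all admissible $c$. Indeed, the paper's own Case~1--Case~5 analysis immediately following the lemma confirms $\dim\{E(0,\nabla)\}=2$ on every one of these models: for $\mu=0$ the quadratic in $a_2$ always has two roots, one of them $0$. So your case split ``generic $c\Rightarrow\dim=1$'' never materializes, and the outline as written cannot produce the stated if-and-only-if.

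The paper's argument is structurally different and does not compute on the normal forms at all. It begins with an abstract non-constant $f\in E(0,\nabla)$, invokes the identity $R_{ij}(df)=0$ (a parallel $1$-form is annihilated by the curvature operator; Lemma~4.1 of \cite{BGG16}) to make a linear change of coordinates so that $f=f(x^1)$, normalizes $f$ to $e^{x^1}$ or to $x^1$, reads off $\Gamma_{11}{}^1,\Gamma_{12}{}^1,\Gamma_{22}{}^1$ from the three scalar equations $\mathcal{H}f=0$, and then evaluates the moduli invariant $\alpha_X=\nabla\rho(X,X;X)^2\rho(X,X)^{-3}$ (resp.\ observes $\nabla\rho=0$) on the resulting family to identify the surface among the list in Definition~\ref{D3.1}. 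The curvature constraint $R(df)=0$ and the invariant $\alpha_X$ are precisely the ingredients your approach omits; without them you have no mechanism to single out the special parameter values.
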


\begin{remark}\label{R3.5}
\rm
Among the surfaces given in Lemma~\ref{L3.4}, only $\mathcal{M}_2^{-\frac{1}{2}}$ ($\cong\mathcal{M}_3^{-\frac{1}{2}}$) is also Type~$\mathcal{B}$, which is the only symmetric Type~$\mathcal{B}$ surface \cite{sym}. Thus a Type~$\mathcal{B}$ model with $\dim\{\mathfrak{K}(\mathcal{M})\}=4$ has $\dim\{E(0,\nabla)\}\geq 2$ if and only if up to  linear equivalence:
$C_{12}{}^1=0$, $C_{22}{}^1=0$, $C_{22}{}^2=0$, and $C_{11}{}^1=-1$. In this case 
we have that $E(0,\nabla)=\operatorname{Span}\{ 1, \log(x^1)\}$.
\end{remark}

\begin{proof}  
Theorem~\ref{T2.5} shows that, for any Type~$\mathcal{A}$ model, $E(\mu,\nabla)$ is spanned by elements of the form 
$f(\vec x)=p(\vec x)e^{\vec a\cdot\vec x}$.
First note that, since the coordinate vector fields are affine Killing vector fields and
$(\partial_{x^i}-a_i)f=(\partial_{x^i}p)e^{\vec a\cdot\vec x}$, one has $(\partial_{x^i}p)e^{\vec a\cdot\vec x}\in E(\mu,\nabla) $ for $i=1,2$. 
Suppose there exists a non-constant function $f\in E(0,\nabla)$, i.e. $\dim\{E(0,\nabla)\}\geq 2$. 
By Lemma~4.1 of \cite{BGG16}, $R_{ij}(df)=0$.
This implies that $df$ belongs to the kernel of the curvature operator. Consequently after a suitable linear change of coordinates, we have
$f=f(x^1)$ for any $f\in E(0,\nabla)$. Suppose first that $e^{a_1x^1}\in E(0,\nabla)$  for $a_1\ne0$. 
Since $\dim\{E(0,\nabla)\}\ne3$, $a_1$ is real. By rescaling coordinates appropriately, we can assume $a_1=1$.  
Equation~(\ref{E1.a}) implies
$\Gamma_{11}{}^1=1$, $\Gamma_{12}{}^1=0$, and $\Gamma_{22}{}^1=0$.
A direct calculation shows that
\begin{eqnarray*}
&&\rho=(\Gamma_{12}{}^2(1-\Gamma_{12}{}^2)+\Gamma_{11}{}^2\Gamma_{22}{}^2)\, dx^1\otimes dx^1,\\
&&\nabla\rho=-2(\Gamma_{12}{}^2(1-\Gamma_{12}{}^2)+\Gamma_{11}{}^2\Gamma_{22}{}^2)\, dx^1\otimes dx^1\otimes dx^1\,.
\end{eqnarray*}

The affine equivalence classes of Type~$\mathcal{A}$ surfaces with Ricci tensor of rank one are parametrized by two invariants (see Theorem 3.8 in \cite{BGG16}):
$$
\alpha_X(\mathcal{M})=\nabla\rho(X,X;X)^2\rho(X,X)^{-3}\quad \mbox{and}\quad
\varepsilon_X(\mathcal{M})=\operatorname{Sign}\{\rho(X,X)\}=\pm 1.
$$
Now a direct calculation shows that
$\alpha_X(\mathcal{M})= 4\rho(X,X)^{-1}$, which is only possible if $\mathcal{M}$ is affine isomorphic to $\mathcal{M}_5^c$ with $c=\pm\frac{1}{2}$ as in Assertion~(1). 
A direct computation then shows $E(0,\nabla)=\operatorname{Span}\{1,e^{x^1}\}$.
 
Suppose next that $x^1$ solves Equation~(\ref{E1.a}). We then obtain
$\Gamma_{11}{}^1=0$, $\Gamma_{12}{}^1=0$, and $\Gamma_{22}{}^1=0$  and thus $\nabla\rho=0$. Hence Theorem 3.8 in \cite{BGG16} shows that  $\mathcal{M}$ is affine isomorphic to $\mathcal{M}_5^0$, or to  $\mathcal{M}_2^{c}$ with $c=-\frac{1}{2}$ as in Assertion~(2).  Again, a direct computation then implies $E(0,\nabla)=\operatorname{Span}\{1,x^1\}$. 
\end{proof}

We now assume $\mu\ne0$ and that $f(x^1,x^2):=e^{a_1x^1+a_2x^2}$ defines an element of $E(\mu,\nabla)$.
Since $1\notin E(\mu,\nabla)$, $(a_1,a_2)\ne(0,0)$.
We shall examine what values of $(a_1,a_2)$ are possible and also what linear expressions $(b_1x^1+b_2x^2)f$ are possible; there
are no quadratic expressions if $\mu\neq-1$ by Theorem \ref{T2.5} and Theorem~\ref{T1.1} since $\operatorname{Rank}\{\rho\}=1$
implies $\mathcal{M}$ is not flat. In each case, a direct computation exhibits
3 linearly independent elements of $E(-1,\nabla)$ so $E(-1,\nabla)$ is spanned by these elements for dimensional reasons; the case $\mu=-1$ is exceptional
and is dealt with separately. In what follows we examine the families of Definition~\ref{D3.1} seriatim; they provide rich examples of solutions. 
We set
$$
f(x^1,x^2):=e^{a_1x^1+a_2x^2}\text{ and }\mathfrak{Q}_\mu(f):=\mathcal{H}f-\mu\, f\rho_s\,.
$$
 
\subsection*{Case 1. The manifold $\mathcal{M}_1$.}
We compute:
$$\begin{array}{ll}
\mathfrak{Q}_{\mu,11} (f)=a_1 (a_1+1)f,&\mathfrak{Q}_{\mu,12}(f)= a_1 (a_2-1)f,\\[0.05in]
\mathfrak{Q}_{\mu,22}(f)=\{a_2^2-2 a_2-\mu \}f\,.
\end{array}$$

\begin{enumerate}
\item $E(-1,\nabla)=\operatorname{Span}\{e^{-x^1+x^2},e^{x^2},x^2e^{x^2}\}$.
\item Let $\mu\ne-1$. Setting $\mathfrak{Q}_{\mu,22}(f)=0$ yields a quadratic relation for $\mu$ with associated
discriminant $4+4\mu\ne0$. If $a_2=1\pm \sqrt{1+\mu}$, then $\mathfrak{Q}_{\mu,22}(f)=0$. Since $\mu\neq -1$, $a_2\neq 1$.
Consequently $a_1=0$. There
are no solutions of the form $p(\vec x)f(\vec x)$ for $\operatorname{deg}\{p\}\geq 1$ so
$$E(\mu,\nabla)=\operatorname{Span}\{e^{(1- \sqrt{1+\mu})x^2},e^{(1+ \sqrt{1+\mu})x^2}\}\,.$$
\end{enumerate}

\subsection*{Case 2. The manifold $\mathcal{M}_2^c$.} We have $c^2+c\ne0$ to ensure $\rho\ne0$. We compute
\begin{eqnarray*}
&&\mathfrak{Q}_{\mu,11}(f)=a_1 (a_1+1) f,\quad \mathfrak{Q}_{\mu,12}(f)= a_1 (a_2-c)f,\\
&&\mathfrak{Q}_{\mu,22}(f)=\{a_2^2-(2 c+1) a_2-c (c+1) \mu\}f\,.
\end{eqnarray*}
\begin{enumerate}
\item $E(-1,\nabla)=\operatorname{Span}\{e^{cx^2},e^{(1+c)x^2},e^{-x^1+cx^2}\}$.
\item  Let $\mu\ne-1$. If  $a_2=c$, then $\mathfrak{Q}_{\mu,22}f=c(1+c)(1+\mu)f\ne0$. Consequently,
$a_2\ne c$ so $a_1=0$. Setting $\mathfrak{Q}_{\mu,22}(f)= 0$ yields a quadratic relation for $a_2$; the critical eigenvalue $\mu_0:=-\frac{(1+2c)^2}{4c(1+c)}$ annihilates the discriminant.
\begin{enumerate}
\smallbreak\item If $\mu\ne\mu_0$,  the relation $\mathfrak{Q}_{\mu,22}(f)=0$ has two distinct solutions $a_2=s_i$, $i=1,2$, and
$E(\mu,\nabla)=\operatorname{Span}\{e^{s_1x^2},e^{s_2x^2}\}$; there are no solutions $p(\vec x)f(\vec x)$ for $\operatorname{deg}\{p\}\geq 1$.
\smallbreak\item If $\mu=\mu_0$, then $a_2=c+\frac12$ and
$E(\mu,\nabla)=\operatorname{Span}\{e^{a_2x^2},x^2e^{a_2x^2}\}$. 
\end{enumerate}\end{enumerate}

\subsection*{Case 3. The manifold $\mathcal{M}_3^c$.} This model is non-linearly locally isomorphic to $\mathcal{M}_2^c$. Thus the
dimension of $E(\mu,\nabla)$ is unchanged, although the local form of the functions can change. We have
\begin{eqnarray*}
&&\mathfrak{Q}_{\mu,11}(f)=a_1^2f,\qquad\mathfrak{Q}_{\mu,12}(f)=a_1(a_2-c)f,\\
&&\mathfrak{Q}_{\mu,22}(f)=(a_2^2-a_2(1+2c)-c(1+c)\mu)f\,.
\end{eqnarray*}
\begin{enumerate}
\item $E(-1,\nabla)=\operatorname{Span}\{e^{cx^2},x^2e^{cx^2},e^{(1+c)x^2}\}$.
\item Let $\mu\ne-1$.  We have $a_1=0$.  The quadratic relation for $a_2$ obtained by setting $\mathfrak{Q}_{\mu,22}(f)$ is the
same as for $\mathcal{M}_2^c$. The remainder of the analysis is the same. Let $\mu_0:=-\frac{(1+2c)^2}{4c(1+c)}$ be the critical eigenvalue annihilating the discriminant.
\begin{enumerate}
\smallbreak\item If $\mu\ne\mu_0$,  the relation $\mathfrak{Q}_{\mu,22}(f)=0$ has two distinct solutions $a_2=s_i$, $i=1,2$, and
$E(\mu,\nabla)=\operatorname{Span}\{e^{s_1x^2},e^{s_2x^2}\}$.
\smallbreak\item If $\mu=\mu_0$, then $a_2=c+\frac12$ and
$E(\mu,\nabla)=\operatorname{Span}\{e^{a_2x^2},x^2e^{a_2x^2}\}$. 
\end{enumerate}\end{enumerate}

\subsection*{Case 4. The manifold $\mathcal{M}_4^c$.} These models are all non-linearly isomorphic to $\mathcal{M}_1$ and thus
$\dim\{E(\mu,\nabla)\}$ is unchanged. We compute
\begin{eqnarray*}
&&\mathfrak{Q}_{\mu,11}f=a_1^2f,\qquad\mathfrak{Q}_{\mu,12}f=a_1(a_2-1)f,\\
&&\mathfrak{Q}_{\mu,22}f=(a_2^2-2a_2-a_1c-\mu)f\,.
\end{eqnarray*}
\begin{enumerate}
\item $E(-1,\nabla)=\operatorname{Span}\{e^{x^2},x^2e^{x^2},(2x^1+c(x^2)^2)e^{x^2}\}$. 
\item Let $\mu\ne-1$. We conclude $a_1=0$ and we have $a_2^2-2a_2-\mu=0$. The discriminant of this relation is
$4+4\mu\ne0$. Thus there are two distinct solutions $a_2=1\pm \sqrt{1+\mu}$; there are no linear solutions $p(\vec x)f(\vec x)$ with $\operatorname{deg}\{p\}\geq 1$ and
$E(\mu,\nabla)=\operatorname{Span}\{e^{(1- \sqrt{1+\mu})x^2},e^{(1+ \sqrt{1+\mu})x^2}\}$.
\end{enumerate}

The manifolds $\mathcal{M}_4^c$ for $c\ne0$ are the only affine surfaces with $\operatorname{Rank}\{\rho\}=1$
which admit an element of $f\in E(-1,\nabla)$ where $f(\vec x)=p(\vec x)e^{\vec a\cdot\vec x}$ and $p(\vec x)$ is quadratic.

\subsection*{Case 5. The manifold $\mathcal{M}_5^c$.} This model is not Type~$\mathcal{B}$. We obtain
\begin{eqnarray*}
&&\mathfrak{Q}_{\mu,11}f=a_1(1+a_1)f,\qquad\mathfrak{Q}_{\mu,12}(f)=a_1(a_2-c)f,\\
&&\mathfrak{Q}_{\mu,22}(f)=(a_2^2 - 2 a_2 c+a_1 -(1 + c^2) \mu)f\,.
\end{eqnarray*}
\begin{enumerate}
\item $E(-1,\nabla)=\operatorname{Span}\{e^{-x^1+cx^2},e^{(c+\sqrt{-1})x^2},e^{(c-\sqrt{-1})x^2}\}$.
\item Let $\mu\ne-1$. If $a_1=-1$, then $a_2=c$ and $\mathfrak{Q}_{\mu,22}(f)=-(1+c^2)(1+\mu)f\ne0$. Thus $a_1\ne-1$ so $a_1=0$ and
we have $a_2^2-2a_2c-(1+c^2)\mu=0$. Setting the discriminant to zero yields $\mu_0=-\frac{c^2}{1+c^2}$. If $\mu=\mu_0$,
then  $a_2=c$, and $E(\mu_0,\nabla)=\operatorname{Span}\{e^{cx^2},x^2e^{cx^2}\}$. If $\mu\ne\mu_0$, we obtain two solutions
$a_2=s_1$ and $a_2=s_2$ for $s_1\ne s_2$. There are no other solutions to the equation and $E(\mu,\nabla)=\operatorname{Span}\{e^{s_1x^2},e^{s_2x^2}\}$.
\end{enumerate}

\subsection{Type~$\mathcal{B}$ surfaces with $\mathfrak{K}(\mathcal{M})=4$}

Results of \cite{BGG16} show that a Type~$\mathcal{B}$ surface model satisfies 
$\dim\{\mathfrak{K}(\mathcal{M})\}=4$ if andonly if it is also of
Type~$\mathcal{A}$. And furthermore, that either of these
two conditions is equivalent to the condition $\Gamma_{12}{}^1=0$, $\Gamma_{22}{}^1=0$, and $\Gamma_{22}{}^2=0$. Although
we have studied the solution space of Type~$\mathcal{A}$ geometries with $\dim\{\mathfrak{K}(\mathcal{M})\}=4$,
we shall also examine the Type~$\mathcal{B}$ geometries since we are interested in the exact form of the solutions.

\begin{definition}\label{D3.6}
Define the following Type~$\mathcal{B}$ surface models on $\mathbb{R}^+\times\mathbb{R}$:
$$
\begin{array}{ll}
{\mathcal Z}_1^\kappa:&
C_{11}{}^1=2\kappa,\, C_{11}{}^2=1,\, C_{12}{}^1=0,\, C_{12}{}^2=\kappa, \,
C_{22}{}^1=0,\, C_{22}{}^2=0.
\\
\noalign{\medskip}
{\mathcal Z}_2^{\kappa,\theta}:&
C_{11}{}^1=2\kappa+\theta-1,\, C_{11}{}^2=0,\, C_{12}{}^1=0,\, C_{12}{}^2=\kappa, \,
C_{22}{}^1=0,\, C_{22}{}^2=0.
\\
\noalign{\medskip}
{\mathcal Z}_3^\kappa: &
C_{11}{}^1=2\kappa-1,\, C_{11}{}^2=0,\, C_{12}{}^1=0,\, C_{12}{}^2=\kappa, \,
C_{22}{}^1=0,\, C_{22}{}^2=0.
\end{array}
$$
These geometries satisfy $\rho=(x^1)^{-2}\tilde\rho_{11}dx^1\otimes dx^1$.
\end{definition}

\begin{remark}\rm
Surfaces ${\mathcal Z}_1^\kappa$ satisfy
$\tilde\rho_{11}=\kappa(1+\kappa)$ and $\alpha_X({\mathcal Z}_1^\kappa)=\frac{4(2\kappa+1)^2}{\kappa(\kappa+1)}$, while surfaces ${\mathcal Z}_2^{\kappa,\theta}$ satisfy $\tilde\rho_{11}=\kappa(\kappa+\theta)$ and $\alpha_X({\mathcal Z}_2^{\kappa,\theta})=\frac{4(2\kappa+\theta)^2}{\kappa(\kappa+\theta)}$. Note that  ${\mathcal Z}_1^\kappa$ and $\mathcal{Z}_2^{\kappa,1}$ are affine isomorphic.  Furthermore ${\mathcal Z}_1^\kappa$ and $\mathcal{Z}_2^{\kappa,\theta}$ are affine isomorphic to $\mathcal{M}_2^c\cong\mathcal{M}_3^c$ for suitable $c$.
The surfaces ${\mathcal Z}_3^\kappa$ satisfy $\tilde\rho_{11}=\kappa^2$ and $\alpha_X({\mathcal Z}_3^\kappa)=16$. Hence ${\mathcal Z}_3^\kappa$ is affine isomorphic to $\mathcal{M}_1\cong\mathcal{M}_4^c$.
These affine isomorphisms are not linear (see Remark 3.16 in~\cite{BGG16}).
\end{remark}

We have the following classification result (which clarifies the statement of Theorem 3.11 in \cite{BGG16}):

\begin{theorem}\label{T3.8}
Let $\mathcal{M}$ be a Type~$\mathcal{B}$ surface model with $\dim\{\mathfrak{K}(\mathcal{M})\}=4$. Then one of the following holds
\begin{enumerate}
\item $\mathcal{M}$ is linearly isomorphic to $\mathcal{Z}_1^\kappa$, and\newline
$E(-1,\nabla)=\operatorname{Span}(x^1)^\kappa\{1, x^1, x^2+x^1\log x^1\}$. Moreover
\begin{enumerate}
\item If $\frac{1}{4}+\tilde{\rho}_{11}(1+\mu)\neq 0$, then $E(\mu,\nabla)=\operatorname{Span}\{ (x^1)^{\alpha_+}, (x^1)^{\alpha_-} \}$,
where $\alpha_\pm =\frac{1}{2}+\kappa\pm \sqrt{\frac{1}{4}+\tilde{\rho}_{11}(1+\mu)}$.
\item If $\frac{1}{4}+\tilde{\rho}_{11}(1+\mu)=0$, then
$\mu=-\frac{1}{4}(1+4\tilde{\rho}_{11}) \tilde{\rho}_{11}{}^{-1}$ and\newline
$E(\mu,\nabla)=\operatorname{Span}(x^1)^{\kappa+\frac{1}{2}}\{ 1, \log x^1 \}$.
\end{enumerate}

\item $\mathcal{M}$ is linearly isomorphic to $\mathcal{Z}_2^{\kappa,\theta}$, and 
$E(-1,\nabla)=\operatorname{Span}(x^1)^\kappa\{1, x^2,  (x^1)^{\theta} \}$. Moreover
\begin{enumerate}
\item If $(2\kappa+\theta)^2+4\mu\tilde{\rho}_{11}\neq 0$, then $E(\mu,\nabla)=\operatorname{Span}\{ (x^1)^{\alpha_+}, (x^1)^{\alpha_-} \}$,
where $\alpha_\pm =\frac{1}{2}\left(2\kappa+\theta\pm \sqrt{(2\kappa+\theta)^2+4\mu\tilde{\rho}_{11}}\right)$
\item If $(2\kappa+\theta)^2+4\mu\tilde{\rho}_{11}\neq 0$, then $\mu=-\frac{1}{4}(2\kappa+\theta)^2\tilde{\rho}_{11}{}^{-1}$ and\newline
$E(\mu,\nabla)=\operatorname{Span}(x^1)^{\kappa+\frac{1}{2}\theta}\{ 1, \log x^1 \}$.
\end{enumerate}

\item $\mathcal{M}$ is linearly isomorphic to $\mathcal{Z}_3^\kappa$, and 
$E(-1,\nabla)=\operatorname{Span}(x^1)^\kappa\{1, x^2,  \log x^1 \}$. Moreover
$E(\mu,\nabla)=\operatorname{Span}\{ (x^1)^{\alpha_+}, (x^1)^{\alpha_-} \}$,
where  $\alpha_\pm =\kappa\pm \sqrt{\tilde{\rho}_{11}(1+\mu)}$.
\end{enumerate}
\end{theorem}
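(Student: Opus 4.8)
The plan is to treat each of the three model families $\mathcal{Z}_1^\kappa$, $\mathcal{Z}_2^{\kappa,\theta}$, $\mathcal{Z}_3^\kappa$ separately, exactly parallel to the Type~$\mathcal{A}$ analysis of Cases~1--5 above, but now using the structural result of Theorem~\ref{T2.6} rather than Theorem~\ref{T2.5}~(1). Recall that these models all satisfy $\Gamma_{12}{}^1=\Gamma_{22}{}^1=\Gamma_{22}{}^2=0$ and $\rho=(x^1)^{-2}\tilde\rho_{11}\,dx^1\otimes dx^1$, so the affine quasi-Einstein operator $\mathfrak{Q}_\mu$ has a particularly simple form: the $(1,2)$ and $(2,2)$ components of $\mathcal{H}f$ are the only ones involving $\rho_s$ nontrivially, while the $(2,2)$ component of $\rho_s$ vanishes. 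First I would record $\mathfrak{Q}_{\mu,ij}(f)$ explicitly for each family when $f=(x^1)^\alpha$; setting these to zero produces the indicial equation $\alpha^2-\alpha(1+C_{11}{}^1)-C_{12}{}^2(1+C_{11}{}^1-C_{12}{}^2)\mu=0$ from Theorem~\ref{T2.6}~(3), whose roots, after substituting the relevant $C_{ij}{}^k$ from Definition~\ref{D3.6} and the value of $\tilde\rho_{11}$ from the Remark, give precisely the stated $\alpha_\pm$.

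Next, to pin down $E(-1,\nabla)$, I would invoke Theorem~\ref{T1.2}: since each $\mathcal{Z}$-model is non-flat and strongly projectively flat (being of Type~$\mathcal{B}$ and also of Type~$\mathcal{A}$ with recurrent rank-one Ricci tensor, it is in particular strongly projectively flat), we have $\dim\{E(-1,\nabla)\}=3$. It then suffices to exhibit three linearly independent solutions to $\mathfrak{Q}_{-1}(f)=0$ and check they span. For $\mathcal{Z}_1^\kappa$ the discriminant term $\frac14+\tilde\rho_{11}(1+\mu)$ degenerates at $\mu=-1$ to $\frac14$, so $\alpha_\pm=\kappa,\kappa+1$ are distinct, giving $(x^1)^\kappa$ and $(x^1)^{\kappa+1}$; the third solution $(x^1)^\kappa(x^2+x^1\log x^1)$ is exactly the Assertion~(1c) shape from Theorem~\ref{T2.6}, and a direct substitution verifies it. For $\mathcal{Z}_2^{\kappa,\theta}$ at $\mu=-1$ the roots are $\kappa,\kappa+\theta$ and the third solution $(x^1)^\kappa x^2$ has the Assertion~(1b-ii) shape. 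For $\mathcal{Z}_3^\kappa$ at $\mu=-1$ one gets a repeated root $\alpha=\kappa$, so the solution set is $(x^1)^\kappa\{1,\log x^1\}$ plus the extra $(x^1)^\kappa x^2$, matching Assertion~(1d-iii) combined with the linear-in-$x^2$ piece.

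For $\mu\ne-1$, I would use Theorem~\ref{T1.2} again: $\dim\{E(\mu,\nabla)\}\le 2$ since the surfaces are non-flat, so it is enough to produce two independent solutions. If the discriminant $\frac14+\tilde\rho_{11}(1+\mu)$ (resp.\ $(2\kappa+\theta)^2+4\mu\tilde\rho_{11}$, resp.\ $\tilde\rho_{11}(1+\mu)$) is nonzero, the two roots $\alpha_\pm$ are distinct and $\{(x^1)^{\alpha_+},(x^1)^{\alpha_-}\}$ are independent; I must also argue there are no solutions of the shape $(x^1)^\alpha(x^2+\cdots)$ or $(x^1)^\alpha\log x^1$ for $\mu\ne-1$, which follows from Theorem~\ref{T2.6}~(1): Assertions~(1a), (1b-ii), (1c), (1d-iii), (1d-v) all force $\mu=-1$, and the surviving possibilities (1b-i), (1d-i,ii,iv) either reduce the count or require checking the linear/logarithmic coefficient can be absorbed---a short computation with the explicit $\mathfrak{Q}_\mu$ shows it cannot appear unless $\mu=-1$. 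When the discriminant vanishes, the indicial equation has a double root, and in the classical Frobenius manner $(x^1)^{\alpha_0}\log x^1$ joins $(x^1)^{\alpha_0}$; substituting directly confirms $\mathfrak{Q}_\mu((x^1)^{\alpha_0}\log x^1)=0$ in each case, giving the stated $\operatorname{Span}(x^1)^{\alpha_0}\{1,\log x^1\}$.

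The main obstacle I anticipate is the bookkeeping in the $\mu\ne-1$ case for $\mathcal{Z}_1^\kappa$ and $\mathcal{Z}_2^{\kappa,\theta}$: one must be sure that no mixed solution involving $x^2$ or $\log x^1$ sneaks in, and the cleanest route is to observe that the $x^2$-dependence is governed by the affine Killing field $Y=\partial_{x^2}$ acting on $E(\mu,\nabla)$, so if any $f$ with $\partial_{x^2}f\ne0$ existed then $Yf$ would be a nonzero $x^2$-independent solution and $f-(\text{correction})$ would contradict either the dimension bound or Theorem~\ref{T2.6}'s constraint that $c_1,c_{12}$-terms force $\mu=-1$. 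The other minor care point is matching signs and the factor conventions in $\alpha_\pm$ against the $\tilde\rho_{11}$ values $\kappa(1+\kappa)$, $\kappa(\kappa+\theta)$, $\kappa^2$---purely algebraic, but worth doing once carefully so the three bracketed radicals in the theorem statement come out verbatim.
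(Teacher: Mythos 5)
Your proposal is correct and follows essentially the same route as the paper: one verifies by direct substitution that the listed functions solve Equation~(\ref{E1.a}) with the stated $\mu$, and then concludes that they span via the dimension bounds of Theorem~\ref{T1.2} ($\dim\{E(-1,\nabla)\}=3$ since these surfaces are strongly projectively flat, and $\dim\{E(\mu,\nabla)\}\le 2$ for $\mu\ne-1$ since they are non-flat). The only cosmetic differences are that the paper imports the count $\dim\{E(\mu,\nabla)\}=2$ for $\mu\ne-1$ from the earlier Type~$\mathcal{A}$ analysis via the affine isomorphisms $\mathcal{Z}_i^\kappa\cong\mathcal{M}_j^c$ whereas you rederive it from the indicial equation, and that your extra effort to exclude ``mixed'' solutions for $\mu\ne-1$ is already subsumed by the dimension bound once two independent power (or power-times-log) solutions are exhibited.
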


\begin{proof} 
The fact that $\mathcal{M}$ is linearly isomorphic to one of the $\mathcal{Z}_i ^\kappa$ was shown in~\cite{BGG16}.
We make a direct computation to show that
the functions in Theorem~\ref{T3.8} satisfy Equation~(\ref{E1.a}) with the indicated eigenvalue $\mu$ under
the given conditions. Furthermore, the listed functions are linearly independent given the imposed relations.
By Theorem~\ref{T1.2}, if $\mathcal{M}$ is Type~$\mathcal{A}$, then
$\dim\{E(-1,\nabla)\}=3$ and by the analysis of the families in Definition~\ref{D3.6} we know that  \
$\dim\{E(\mu,\nabla)\}=2$ for $\mu\ne-1$. The fact that the given functions form a basis now follows.
\end{proof}

\section{Homogeneous surfaces with $\dim\{\mathfrak{K}(\mathcal{M})\}=3$}\label{S4}

A pseudo-Riemannian surface is of non-zero constant sectional curvature if and only if its isometry group is $3$-dimensional. Hence $\dim\{\mathfrak{K}(\mathcal{M})\}=3$ for any non-flat Type~$\mathcal{C}$ model. On the other hand there are  homogeneous Type~$\mathcal{B}$ models with $\dim\{\mathfrak{K}(\mathcal{M})\}=3$. 
In this section, we complete the study of the space of solutions to the affine quasi-Einstein equation for those affine surfaces with  $\dim\{\mathfrak{K}(\mathcal{M})\}=3$ by considering both types separately.

\subsection{Homogeneous models of Type~$\mathcal{B}$ with $\boldsymbol{\dim\{\mathfrak{K}(\mathcal{M})\}=3}$}

\begin{definition}\label{D4.1}\rm Define the following Type~$\mathcal{B}$ structures for $c\ge0$:
\smallbreak$\mathcal{N}_1^\pm:
C_{11}{}^1=-\frac32$, $C_{11}{}^2=0$, $C_{12}{}^1=0$, 
$C_{12}{}^2=-\frac12$, $C_{22}{}^1=\mp\frac12$,
$C_{22}{}^2=0$.
\smallbreak $\mathcal{N}_2^c:C_{11}{}^1=-\frac32$, 
$C_{11}{}^2=0$, $C_{12}{}^1=1$, $C_{12}{}^2=-\frac12$, 
$C_{22}{}^1=\phantom{A.}c$, $C_{22}{}^2=2$.
\smallbreak $\mathcal{N}_3:C_{11}{}^1=-1$, $C_{11}{}^2=0$, $C_{12}{}^1=0$, $C_{12}{}^2=-1$, $C_{22}{}^1=\phantom{}-1$, $C_{22}{}^2=0$.
\smallbreak $\mathcal{N}_4:C_{11}{}^1=-1$, $C_{11}{}^2=0$, $C_{12}{}^1=0$, $C_{12}{}^2=-1$, $C_{22}{}^1=\phantom{Aa}1$,
$C_{22}{}^2=0$.
\medbreak\noindent
\end{definition}

\begin{remark}\rm
We note that $\mathcal{N}_3$ is the Lorentzian hyperbolic plane; $\nabla$ is the Levi-Civita connection of the Lorentzian
metric $(x^1)^{-2}\{(dx^1)^2-(dx^2)^2\}$. We also note that $\mathcal{N}_4$ is the hyperbolic plane; $\nabla$ is the Levi-Civita connection
of the Riemannian metric $\{(x^1)^{-2}\{(dx^1)^2+(dx^2)^2\}$.
Since $\dim\{\mathfrak{K}(\mathcal{M})\}=3$, by Lemma~\ref{L2.4}, affine isomorphic and linear isomorphic are equivalent notions in this setting. 
\end{remark}

The following result is obtained in \cite{BGG16a}.

\begin{theorem}
Let $\mathcal{M}$ be a Type~$\mathcal{B}$ model with $\dim{\{\mathfrak{K}(\mathcal{M})\}}=3$.
Then $\mathcal{M}$ is linearly isomorphic to $\mathcal{N}^{\pm}_1$, to $\mathcal{N}^{c}_2$,  to $\mathcal{N}_3$, or to $\mathcal{N}_4$. 
Moreover, $\mathcal{N}^{\pm}_1$ and $\mathcal{N}^{c}_2$ are not of Type~$\mathcal{C}$ while 
$\mathcal{N}_3$ and $\mathcal{N}_4$ are of Type~$\mathcal{C}$.
\end{theorem}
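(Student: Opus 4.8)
The plan is to establish the classification of Type~$\mathcal{B}$ models with $\dim\{\mathfrak{K}(\mathcal{M})\}=3$ by reducing to a finite normal-form computation within the moduli of Type~$\mathcal{B}$ structures. Since every Type~$\mathcal{B}$ geometry is encoded by the six constants $C_{ij}{}^k$, and since by Lemma~\ref{L2.4} affine isomorphism is the same as linear isomorphism for these models (as $\dim\{\mathfrak{K}(\mathcal{M})\}\leq3$), the classification becomes the study of the orbit space of the $C_{ij}{}^k$ under the linear transformations $\Psi(x^1,x^2)=(x^1, a_1^2 x^1 + a_2^2 x^2)$ with $a_2^2\neq0$. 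First I would recall from~\cite{BGG16} (cited just before the theorem) that the condition $\dim\{\mathfrak{K}(\mathcal{M})\}=3$ is equivalent to a set of explicit algebraic constraints on the $C_{ij}{}^k$ — namely those that exclude the Type~$\mathcal{A}$ overlap case $C_{12}{}^1=C_{22}{}^1=C_{22}{}^2=0$ (where the dimension jumps to~$4$ or the surface is flat) and also exclude the generic case $\dim=2$. The solution of the affine Killing equation $\mathcal{L}_X\nabla=0$ for Type~$\mathcal{B}$ connections is a linear ODE system in $X$, and one reads off exactly when its solution space is $3$-dimensional rather than $2$-dimensional.

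Next I would carry out the normal-form reduction. Using the residual $\operatorname{GL}$-action, one can normalize: for instance rescaling allows one to set a distinguished nonzero entry among $(C_{12}{}^1, C_{22}{}^1, C_{22}{}^2)$ to $\pm1$ or $1$, and the shear $x^2\mapsto a_1^2 x^1 + x^2$ can be used to kill $C_{11}{}^2$ (or another off-diagonal term). Tracking how each $C_{ij}{}^k$ transforms under $\Psi$ — which is a routine but slightly lengthy substitution into $\Gamma_{ij}{}^k=(x^1)^{-1}C_{ij}{}^k$ and the tensorial transformation law for connections — one splits into the cases according to which of $C_{12}{}^1$, $C_{22}{}^1$, $C_{22}{}^2$ vanish. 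The case analysis should produce precisely the families $\mathcal{N}_1^\pm$, $\mathcal{N}_2^c$, $\mathcal{N}_3$, $\mathcal{N}_4$; within each family one then verifies the remaining parameters ($c\geq0$, the sign $\pm$) are genuine moduli by checking an affine invariant (such as the invariants $\alpha_X$, $\varepsilon_X$ used elsewhere in the paper, or the rank and structure of $\rho$ together with $\nabla\rho$) distinguishes different parameter values and that the normalizations $c\geq0$ remove the remaining discrete ambiguity.

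For the second assertion — that $\mathcal{N}_1^\pm$ and $\mathcal{N}_2^c$ are not of Type~$\mathcal{C}$ while $\mathcal{N}_3$ and $\mathcal{N}_4$ are — I would invoke the criterion recalled in Remark~\ref{R2.1}: a Type~$\mathcal{B}$ surface is of Type~$\mathcal{C}$ if and only if it is flat or affine isomorphic to the model with non-zero Christoffel data $(C_{11}{}^1,C_{12}{}^2,C_{22}{}^1)=(-1,-1,\pm1)$. One observes directly that $\mathcal{N}_3$ has $(C_{11}{}^1,C_{12}{}^2,C_{22}{}^1)=(-1,-1,-1)$ and $\mathcal{N}_4$ has $(-1,-1,1)$, matching the two Type~$\mathcal{C}$ cases (the Lorentzian and Riemannian hyperbolic planes, as noted in the remark after Definition~\ref{D4.1}), whereas $\mathcal{N}_1^\pm$ and $\mathcal{N}_2^c$ have $C_{11}{}^1=-\frac32\neq-1$ and are non-flat (their Ricci tensor is nonzero), so they fail the criterion. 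Alternatively one checks Type~$\mathcal{C}$ directly: it is equivalent to $\rho$ being symmetric and $\nabla$ preserving a volume form proportional to a metric, and the relevant obstruction ($\rho_a\neq0$ or $\nabla\rho$ not of the Codazzi form of a metric connection) is nonzero for $\mathcal{N}_1^\pm$, $\mathcal{N}_2^c$.

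The main obstacle I anticipate is the bookkeeping in the normal-form reduction: the transformation law for the $C_{ij}{}^k$ under the shear-and-scale group is nonlinear in the group parameters, and several subcases arise depending on degeneracies (e.g. when normalizing $C_{12}{}^1$ is impossible because it already vanishes, one must fall back on $C_{22}{}^1$ or $C_{22}{}^2$), so organizing the case tree so that it is exhaustive and non-overlapping is the delicate part. Fortunately, since the bulk of this was already carried out in~\cite{BGG16} and~\cite{BGG16a}, the proof in this paper can legitimately cite that reduction and focus on confirming the explicit normal forms and the Type~$\mathcal{C}$ dichotomy, which is why the proof given here can afford to be brief.
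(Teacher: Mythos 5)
The paper offers no proof of this statement: it simply records that the classification was obtained in \cite{BGG16a}. Your overall strategy (reduce to the shear-and-scale normal form via Lemma~\ref{L2.4}, cite \cite{BGG16} for the algebraic characterization of $\dim\{\mathfrak{K}(\mathcal{M})\}=3$, then settle the Type~$\mathcal{C}$ dichotomy by invariants) is the natural route and is consistent with what the cited reference does, so deferring the case analysis to the literature is legitimate here.

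There is, however, a genuine gap in the one part you do argue in detail, namely that $\mathcal{N}_1^\pm$ is not of Type~$\mathcal{C}$. Your primary argument --- that $C_{11}{}^1=-\tfrac32\neq-1$ so the criterion of Remark~\ref{R2.1} fails --- compares coordinate expressions, whereas the criterion is stated up to affine (equivalently, by Lemma~\ref{L2.4}, linear) isomorphism, and $C_{11}{}^1$ is not invariant under the admissible maps: under $\Psi(x^1,x^2)=(x^1,\alpha x^1+\beta x^2)$ one has $\tilde C_{11}{}^1=C_{11}{}^1-2(\alpha/\beta)C_{12}{}^1+(\alpha/\beta)^2C_{22}{}^1$, which for $\mathcal{N}_1^-$ equals $-\tfrac32+\tfrac12(\alpha/\beta)^2$ and can be shifted to $-1$. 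Your fallback obstruction also fails for this family: since $\rho_a=\tfrac12(x^1)^{-2}(C_{12}{}^1+C_{22}{}^2)\,dx^1\wedge dx^2$ and $C_{12}{}^1=C_{22}{}^2=0$ for $\mathcal{N}_1^\pm$, the Ricci tensor of $\mathcal{N}_1^\pm$ is symmetric, so $\rho_a\neq0$ is not the relevant obstruction. The invariant that actually does the job is the rank of $\rho$: a direct computation gives $\rho(\mathcal{N}_1^\pm)=\pm(x^1)^{-2}dx^2\otimes dx^2$, which has rank one, whereas a non-flat Type~$\mathcal{C}$ surface has $\rho=c\,g$ of rank two. (Your $\rho_a\neq0$ argument is fine for $\mathcal{N}_2^c$, where $C_{12}{}^1+C_{22}{}^2=3$, and your identification of $\mathcal{N}_3$, $\mathcal{N}_4$ with the two hyperbolic models of Remark~\ref{R2.1} is correct.)
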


The following result exhibits a  basis for the space of solutions of the affine quasi-Einstein equation in this setting.
It is obtained by direct computation.

\begin{theorem}\label{T4.4}
Let $\mathcal{M}$ be a Type~$\mathcal{B}$ model with $\dim{\{\mathfrak{K}(\mathcal{M})\}}=3$, then 
$E(\mu,\nabla)={0}$ except in the following cases:
\begin{enumerate}
\item $E(0,\nabla)=\operatorname{Span}\{1\}$.
\item $\mathcal{M}$ is $\mathcal{N}_{3}$, $\mu=-1$, and $E(-1,\nabla)=(x^1)^{-1}\operatorname{Span}\left\{ 1, x^2,(x^2)^2+ (x^1)^2\right\}$.
\item $\mathcal{M}$ is $\mathcal{N}_{4}$, $\mu=-1$, and $E(-1,\nabla)=(x^1)^{-1}\operatorname{Span}\left\{ 1, x^2,(x^2)^2- (x^1)^2\right\}$.
\item $\mathcal{M}$ is $\mathcal{N}_1^{\pm}$, $\mu=-\frac14$, and $E(-\frac14,\nabla)=(x^1)^{\frac{1}{2}}\operatorname{Span}\{1, x^2\}$ .
\end{enumerate}
\end{theorem}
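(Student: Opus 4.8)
The plan is to split the four model families according to whether $\mathcal{M}$ is of Type~$\mathcal{C}$, since the two cases call for rather different tools. For the Type~$\mathcal{C}$ models the Obata equation together with Theorems~\ref{T1.2} and~\ref{T1.3} does the work, whereas for $\mathcal{N}_1^\pm$ and $\mathcal{N}_2^c$ one feeds the explicit Christoffel symbols of Definition~\ref{D4.1} into the normal forms of Theorem~\ref{T2.6}.

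For the Type~$\mathcal{C}$ models $\mathcal{N}_3$ and $\mathcal{N}_4$, the connection $\nabla$ is the Levi-Civita connection of a metric $g$ of constant curvature $c\neq0$ (the Lorentzian, resp. the Riemannian, hyperbolic plane), so $\rho_s=\rho=cg$ has rank two and $\nabla\rho=0$; hence $\mathcal{M}$ is strongly projectively flat. As noted in Section~\ref{S2.2}, Theorem~\ref{T1.3} then gives $E(0,\nabla)=\operatorname{Span}\{1\}$ and $E(\mu,\nabla)=0$ for $\mu\notin\{0,-1\}$, while Theorem~\ref{T1.2} gives $\dim\{E(-1,\nabla)\}=3$; and for $\mu=-1$ Equation~\eqref{E1.a} becomes the Obata equation $\mathcal{H}f+cfg=0$. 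I would substitute the three functions of Assertions~(2) and~(3) into that equation using $\Gamma_{ij}{}^k=(x^1)^{-1}C_{ij}{}^k$, verify by a short computation that each solves it (the sign of the quadratic term being dictated by the signature of $g$), note that the three are linearly independent, and conclude that they span $E(-1,\nabla)$. Alternatively, since these surfaces are also Type~$\mathcal{B}$ with $C_{22}{}^1\neq0$, Theorem~\ref{T2.6}~(1) forces the normal form~(1a), which produces the same basis.

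For $\mathcal{N}_1^\pm$ and $\mathcal{N}_2^c$, non-flat Type~$\mathcal{B}$ surfaces, I would first compute $\rho$ and $\nabla\rho$ directly from Definition~\ref{D4.1}: for $\mathcal{N}_2^c$ one finds $\rho$ non-symmetric, and for $\mathcal{N}_1^\pm$ one finds $\rho$ of rank one with $\nabla\rho$ not totally symmetric, so neither surface is strongly projectively flat. By Theorem~\ref{T1.2} no eigenspace is then three-dimensional, which discards the normal forms~(1a),~(1c),~(1b-ii),~(1d-iii) and~(1d-v) of Theorem~\ref{T2.6}~(1). Since $C_{12}{}^1\neq0$ for $\mathcal{N}_2^c$ and $C_{22}{}^1\neq0$ for $\mathcal{N}_1^\pm$, Theorem~\ref{T2.6}~(3) forbids $E(\mu,\nabla)$ from containing two distinct pure powers $(x^1)^{\alpha_1},(x^1)^{\alpha_2}$, which also eliminates~(1d-iv). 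The only survivors are $f=(x^1)^\alpha$, $f=(x^1)^\alpha\log(x^1)$ (form~(1d-ii)), or $f=(x^1)^\alpha\{x^2+c_1x^1\}$ together with its companion $(x^1)^\alpha=\partial_{x^2}f$ (form~(1b-i)).

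It remains to insert these shapes into $\mathcal{H}f=\mu f\rho_s$ using the explicit $\Gamma_{ij}{}^k$. One computes $\rho_{s,11}=0$ for both $\mathcal{N}_1^\pm$ and $\mathcal{N}_2^c$, so for an $x^1$-dependent $f$ the $(1,1)$-component reads $f''-\Gamma_{11}{}^1f'=0$, an Euler equation with distinct indicial roots; this rules out form~(1d-ii) and shows that any such $f$ lies in $\operatorname{Span}\{1,(x^1)^{-1/2}\}$. Imposing the remaining two components then forces, for $\mathcal{N}_2^c$, $f$ to be constant, so $E(\mu,\nabla)=0$ unless $\mu=0$; and for $\mathcal{N}_1^\pm$ it forces either $\mu=0$ with $f$ constant, or the distinguished eigenvalue $\mu=-\tfrac14$. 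At $\mu=-\tfrac14$ one checks that the coefficient $c_1$ in form~(1b-i) must vanish, because the companion pure power $(x^1)^{\alpha+1}$ does not lie in $E(-\tfrac14,\nabla)$, and that the two functions of Assertion~(4) solve the equation, so $E(-\tfrac14,\nabla)$ equals their span. In every case $1\in E(0,\nabla)$ and $\rho_s\neq0$ force $E(0,\nabla)=\operatorname{Span}\{1\}$, which is Assertion~(1). The main obstacle is the organized bookkeeping of this case split --- in particular, checking that for $\mathcal{N}_2^c$ the three components of the Hessian equation are jointly inconsistent for every $\mu\neq0$ and every $c\geq0$, so no nonconstant solution survives.
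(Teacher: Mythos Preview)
The paper's proof of this theorem is literally the single sentence ``It is obtained by direct computation.'' Your proposal is therefore far more detailed than what the paper supplies: you organize the four families by whether they are also Type~$\mathcal{C}$, invoke Theorems~\ref{T1.2}--\ref{T1.3} to handle $\mathcal{N}_3$ and $\mathcal{N}_4$ structurally, and for $\mathcal{N}_1^\pm$, $\mathcal{N}_2^c$ you use Theorem~\ref{T2.6} to prune the possible normal forms before computing. This is a genuinely different and more conceptual route; it buys you an explanation of \emph{why} the exceptional eigenvalues are what they are, whereas the paper simply asserts that a computation confirms the list.

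Two small points are worth tightening. First, your argument that $c_1=0$ in form~(1b-i) for $\mathcal{N}_1^\pm$ is not quite right: the companion of $f=(x^1)^\alpha(x^2+c_1x^1)$ in Theorem~\ref{T2.6}~(1b-i) is $(x^1)^\alpha$, not $(x^1)^{\alpha+1}$, so the fact that $(x^1)^{\alpha+1}\notin E$ does not by itself force $c_1=0$. You can fix this either by a direct substitution, or more elegantly by noting that $x^2\mapsto -x^2$ is an affine symmetry of $\mathcal{N}_1^\pm$, so $(x^1)^{1/2}(-x^2+c_1x^1)$ also lies in $E(-\tfrac14,\nabla)$; adding gives $2c_1(x^1)^{3/2}\in E$, and Theorem~\ref{T2.6}~(3) with $C_{22}{}^1\ne0$ then forces $c_1=0$. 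Second, your blanket claim $\rho_s\ne0$ for $\mathcal{N}_2^c$ fails at $c=\tfrac12$: by Theorem~\ref{T5.5}, $\mathcal{N}_2^{1/2}$ is linearly isomorphic to $\mathcal{P}^+_{0,3/\sqrt2}$, which has $\rho_s=0$. In that degenerate case the quasi-Einstein equation collapses to $\mathcal{H}f=0$ for every $\mu$, and Theorem~\ref{T5.5}~(3) gives $E(\mu,\nabla)=\operatorname{Span}\{1\}$; this is a separate (easy) branch you should isolate rather than fold into the $\rho_s\ne0$ argument.
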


\subsection{Homogeneous models of Type~$\mathcal{C}$}

The surfaces $\mathcal{N}_3$ and $\mathcal{N}_4$ in Definition~\ref{D4.1} correspond
 to the Riemannian and Lorentzian hyperbolic spaces. 
 These manifolds are both  Type~$\mathcal{B}$ and  Type~$\mathcal{C}$. There is one more surface of  
 Type~$\mathcal{C}$ which is not of  Type~$\mathcal{B}$, this is the affine surface defined by the Levi-Civita 
 connection of the sphere $\mathbb{S}^2$.
Since any non-flat Type~$\mathcal{C}$ surface is strongly projectively flat with Ricci tensor of rank two, Theorem \ref{T1.3} shows that 
$E(0,\nabla)=\operatorname{Span}\{ 1\}$, and  $E(\mu,\nabla)=0$ for $\mu\neq -1,0$.

Define the warped product metrics $g_1(x^1,x^2)= dx^1\otimes dx^1+  \cos^2x^1 dx^2\otimes dx^2$ and  $g_2^\pm(x^1,x^2)=\pm dx^1\otimes dx^1+  e^{2 x^1} dx^2\otimes dx^2$, which correspond to the sphere and the hyperbolic spaces in Riemannian (+) and Lorentzian(-) signatures. The Christoffel symbols associated to the corresponding Levi-Civita connections are as follows:

\begin{definition}\label{D4.5} Define the following Type~$\mathcal{C}$ structures:
$$
\begin{array}{l}
\mathcal{S}^2:\, \Gamma_{11}{}^1=0, \Gamma_{11}{}^2=0,\Gamma_{12}{}^1=0,\Gamma_{12}{}^2=-\tan x^1, \Gamma_{22}{}^1=\cos x^1\sin x^1,\Gamma_{22}{}^2=0.
\\
\mathcal{H}^2_\pm:\, \Gamma_{11}{}^1=0, \Gamma_{11}{}^2=0, \Gamma_{12}{}^1=0, \Gamma_{12}{}^2=1, \Gamma_{22}{}^1=\mp e^{2 x^1}, \Gamma_{22}{}^2=0.
\end{array}
$$
\end{definition}

\begin{remark}\rm
If $\mathcal{M}$ is non-flat and of Type~$\mathcal{C}$ then $\dim\{\mathfrak{K}(\mathcal{M})\}=3$, and $\mathcal{M}$ is 
locally affine isomorphic to $\mathcal{S}^2$ or $\mathcal{H}^2_\pm$. The models
 $\mathcal{S}^2$ and $\mathcal{H}^2_\pm$ are not affine isomorphic.
The models $\mathcal{H}^2_\pm$ are both Type~$\mathcal{C}$ and Type~$\mathcal{B}$. Moreover, they are 
locally affine isomorphic to $\mathcal{N}_3$ and $\mathcal{N}_4$, but they are not linearly isomorphic.
\end{remark}

The following theorem is a consequence of Theorem~\ref{T1.3} and a  direct computation using the three connections of Definition~\ref{D4.5}.

\begin{theorem}\label{T4.7}
Let $\mathcal{M}$ be a Type~$\mathcal{C}$ model. Then  one of the following holds:
\begin{enumerate}
\item $\mathcal{M}$ is affine isomorphic to $\mathcal{S}^2$ and\\ $E(-1,\nabla)=\operatorname{Span}\{\sin x^1, \cos x^1\cos x^2, \cos x^1 \sin x^2\}$.   
\item $\mathcal{M}$ is affine isomorphic to $\mathcal{H}^2_\pm$  and\\ $E(-1,\nabla)=\operatorname{Span}\{e^{x^1}, x^2 e^{x^1}, e^{-x^1}\pm (x^2)^2 e^{ x^1}\}$.
\end{enumerate}
Furthermore  $E(0,\nabla)=\operatorname{Span}\{1\}$ and $E(\mu,\nabla)=0$ for any $\mu\neq -1,0$.
\end{theorem}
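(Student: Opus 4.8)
The plan is to establish Theorem~\ref{T4.7} by combining the structural results already available with an explicit solution of Equation~(\ref{E1.a}) for the three model connections in Definition~\ref{D4.5}. First I would dispatch the cases $\mu\ne-1$ immediately: since every non-flat Type~$\mathcal{C}$ surface is the Levi-Civita connection of a metric of constant sectional curvature, it is strongly projectively flat and has Ricci tensor of rank two (as $\rho^g=cg$ with $c\ne0$), so Theorem~\ref{T1.3} gives $\dim\{E(0,\nabla)\}=1$ with $E(0,\nabla)=\operatorname{Span}\{1\}$ and $\dim\{E(\mu,\nabla)\}=0$ for $\mu\notin\{-1,0\}$. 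This reduces the entire problem to identifying $E(-1,\nabla)$ for the three representatives $\mathcal{S}^2$ and $\mathcal{H}^2_\pm$, and by the remark preceding the theorem these three are, up to affine isomorphism, all the Type~$\mathcal{C}$ models.

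For the eigenvalue $\mu=-1$, the key point is that Equation~(\ref{E1.a}) becomes Obata's equation $\mathcal{H}f+cfg=0$, as already noted in Section~\ref{S2.2}. By Theorem~\ref{T1.2}, a non-flat strongly projectively flat surface has $\dim\{E(-1,\nabla)\}=3$; hence for each model it suffices to exhibit three linearly independent solutions, and they automatically span. So the approach is purely verificational: for $\mathcal{S}^2$, plug $f\in\{\sin x^1,\ \cos x^1\cos x^2,\ \cos x^1\sin x^2\}$ into $\mathcal{H}f=-f\rho_s$ using the Christoffel symbols $\Gamma_{12}{}^2=-\tan x^1$, $\Gamma_{22}{}^1=\cos x^1\sin x^1$ and the Ricci tensor $\rho=g_1$; for $\mathcal{H}^2_\pm$, plug $f\in\{e^{x^1},\ x^2e^{x^1},\ e^{-x^1}\pm(x^2)^2e^{x^1}\}$ into the analogous equation with $\Gamma_{12}{}^2=1$, $\Gamma_{22}{}^1=\mp e^{2x^1}$ and $\rho=\mp g_2^\pm$ (so that $\rho_s$ is the appropriate constant-curvature Ricci tensor). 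Linear independence of each triple is evident by inspection in the respective coordinate ranges.

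The computations are entirely routine once the Hessian components $\nabla^2f_{ij}=\partial_{x^i}\partial_{x^j}f-\Gamma_{ij}{}^k\partial_{x^k}f$ are written out, so there is no genuine obstacle; the only thing requiring care is bookkeeping the sign conventions, namely making sure that for each model $\rho_s$ is computed from the stated connection and that the correct sectional curvature sign enters (positive for $\mathcal{S}^2$, negative for $\mathcal{H}^2_\pm$, with the Lorentzian case $\mathcal{H}^2_-$ carrying the extra sign from $\pm dx^1\otimes dx^1$). I would present the verification for $\mathcal{S}^2$ in a line or two, indicate that $\mathcal{H}^2_\pm$ is handled identically, and invoke Theorem~\ref{T1.2} to conclude that the exhibited solutions form a basis of $E(-1,\nabla)$, which together with the first paragraph completes the proof.
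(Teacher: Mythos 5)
Your proposal is correct and follows essentially the same route as the paper: the cases $\mu\neq-1$ are dispatched by Theorem~\ref{T1.3}, and the case $\mu=-1$ is a direct verification for the three connections of Definition~\ref{D4.5}, with Theorem~\ref{T1.2} (strong projective flatness forcing $\dim\{E(-1,\nabla)\}=3$) guaranteeing that the three exhibited independent solutions span. Your sign bookkeeping ($\rho=g_1$ for $\mathcal{S}^2$ and $\rho=\mp g_2^\pm$ for $\mathcal{H}^2_\pm$) is also correct.
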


\begin{remark}\rm
In the case of the unit sphere $\mathbb{S}^2$, the eigenfunctions corresponding to the first eigenvalue of the Laplacian are given by the restriction to $\mathbb{S}^2$ of harmonic polynomials of degree one in $\mathbb{R}^3$ (see, for example Corollary 4.49 in \cite{GHL}). Hence
$E(-1,\nabla)$ is generated by the restriction to $\mathbb{S}^2$ of the functions $\{ x,y,z\}$ given by the usual coordinates in $\mathbb{R}^3$.  Theorem \ref{T4.7}~(1) provides the local expressions of these solutions based on a warped product description of the sphere.
\end{remark}

\section{Homogeneous surfaces with $\dim\{\mathfrak{K}(\mathcal{M})\}=2$}\label{S5}

Type~$\mathcal{A}$ homogeneous surfaces with $\dim\{\mathfrak{K}(\mathcal{M})\}=2$ are determined by the fact that the Ricci tensor is of rank two (cf. Theorem 3.4 in \cite{BGG16}). Since the Ricci tensor is symmetric, it defines a flat metric on $\mathcal{M}$. This metric may be of signature $(2,0)$, $(1,1)$ or $(0,2)$, which are the cases we consider in Section 5.1. The situation in the Type~$\mathcal{B}$ setting is more subtle. First of all, the Ricci tensor is not necessarily symmetric and there are examples with skew-symmetric Ricci tensor. Indeed all but one non-flat homogeneous surfaces with $\rho_s=0$  have $\dim\{\mathfrak{K}(\mathcal{M})\}=2$ (cf. Theorem~\ref{T5.5}).
Moreover, the symmetric part of the Ricci tensor may have rank two
or rank one as in Theorem \ref{T5.10}.

\subsection{Type~$\mathcal{A}$ surfaces with $\boldsymbol{\dim\{\mathfrak{K}(\mathcal{M})\}=2}$}
We begin our study by recalling the following classification result:

\begin{theorem}{\rm\cite{BGG16a}}
Let $\mathcal{M}$ be a Type~$\mathcal{A}$ surface with $\operatorname{Rank}\{\rho\}=2$. Fix the signature $(p,q)\in\{(2,0),(1,1),(0,2)\}$ of $\rho$. Define
$\tilde\rho_{ij}:=\Gamma_{ik}{}^l\Gamma_{jl}{}^k$,
$\psi:=\operatorname{Tr}_\rho\{\tilde\rho\}=\rho^{ij}\tilde\rho_{ij}$,
$\Psi:=\det(\tilde\rho)/\det(\rho)$. Then $\psi$ and $\Psi$ are affine invariants and the following conditions are equivalent:
\begin{enumerate}
\item $\mathcal{M}_1$ is affine isomorphic to $\mathcal{M}_2$.
\item $\mathcal{M}_1$ is linearly isomorphic to $\mathcal{M}_2$.
\item $(\psi,\Psi)(\mathcal{M}_1)=(\psi,\Psi)(\mathcal{M}_2)$.
\end{enumerate}
\end{theorem}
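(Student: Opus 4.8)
The plan is to reduce the whole statement to linear algebra on the $2$-dimensional space of cotangent vectors, exploiting that for a Type~$\mathcal{A}$ model both $\rho$ and $\tilde\rho$ behave like honest tensors once one restricts to \emph{linear} changes of coordinates. First I would record two preliminary facts. Symmetry of $\tilde\rho_{ij}=\Gamma_{ik}{}^l\Gamma_{jl}{}^k$ is immediate after relabelling $k\leftrightarrow l$, and symmetry of $\rho$ follows from the formula $\rho_{jl}=\Gamma_{km}{}^k\Gamma_{jl}{}^m-\Gamma_{jm}{}^k\Gamma_{kl}{}^m$ (valid for constant Christoffel symbols) by the same relabelling. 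Since $\operatorname{Rank}\{\rho\}=2$, the symmetric tensor $\rho$ is a pseudo-Riemannian metric of the prescribed signature $(p,q)$, so $A:=\rho^{-1}\tilde\rho$ is a well-defined endomorphism field with characteristic polynomial $t^2-\psi\,t+\Psi$, and it is $\rho$-self-adjoint because $\rho$ and $\tilde\rho$ are both symmetric.

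Next I would dispose of the easy implications. Under $T\in\operatorname{GL}(2,\mathbb{R})$ the Christoffel symbols transform with no inhomogeneous term, so a short computation shows $\tilde\rho$ transforms as a $(0,2)$-tensor just like $\rho$; hence $A$ changes by conjugation and $\psi=\operatorname{Tr}\{A\}$, $\Psi=\det\{A\}$ are linear invariants, which is the implication $(2)\Rightarrow(3)$. Because a Type~$\mathcal{A}$ surface with $\operatorname{Rank}\{\rho\}=2$ has $\dim\{\mathfrak{K}(\mathcal{M})\}=2\le 3$, Lemma~\ref{L7.1} gives $(1)\Leftrightarrow(2)$; combining the two we obtain $(1)\Rightarrow(3)$ and simultaneously the assertion that $\psi$ and $\Psi$ are affine invariants. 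Everything therefore comes down to $(3)\Rightarrow(2)$: that $(\psi,\Psi)$ together with the signature is a \emph{complete} linear invariant.

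For $(3)\Rightarrow(2)$ I would argue by explicit normal forms, treating the three signatures $(2,0),(1,1),(0,2)$ in parallel. In each signature a single linear transformation brings $\rho$ to the fixed model $\rho_0=\varepsilon_1\,dx^1\otimes dx^1+\varepsilon_2\,dx^2\otimes dx^2$ with $\varepsilon_i=\pm1$; the residual freedom is then the one-dimensional stabilizer $H=\{T:T^t\rho_0T=\rho_0\}$. The first step is to solve the system $\rho[\Gamma]=\rho_0$ for the six constants $\Gamma_{ij}{}^k$ and to check that on the resulting family the tensor $\tilde\rho[\Gamma]$ recovers $\Gamma$ uniquely; equivalently, that the quadratic map $\Gamma\mapsto(\rho[\Gamma],\tilde\rho[\Gamma])$ is injective on the rank-two locus. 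Granting this, suppose $\mathcal{M}_1$ and $\mathcal{M}_2$ have the same signature and the same $(\psi,\Psi)$; after normalizing both metrics to $\rho_0$, the $\rho_0$-self-adjoint operators $A_1,A_2$ have equal characteristic polynomial, hence are conjugate by some $T\in H$, and transporting $\Gamma_1$ by that $T$ (which fixes $\rho_0$ and carries $\tilde\rho_1$ to $\tilde\rho_2$) together with the injectivity just established forces $\Gamma_2=\Gamma_1^{\,T}$, i.e. $\mathcal{M}_1$ and $\mathcal{M}_2$ are linearly isomorphic.

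The step I expect to be the main obstacle is twofold. Recovering $\Gamma$ from $(\rho[\Gamma],\tilde\rho[\Gamma])$ requires actually solving the quadratic Ricci equations and inverting them, which is where the rank-two hypothesis really enters and where the case analysis lives. Moreover a $\rho_0$-self-adjoint operator with a repeated eigenvalue need not be a multiple of the identity---in Lorentzian signature it may be a genuine Jordan block with the same characteristic polynomial---so when $A$ has a double eigenvalue one must control which Segre types actually occur for a Type~$\mathcal{A}$ rank-two geometry and verify that $(\psi,\Psi)$ still separates the linear-isomorphism classes there; tracking these degenerate configurations, together with the (disconnected) action of $H$, is what makes the argument lengthy rather than conceptually hard. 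It is also worth recording along the way which pairs $(\psi,\Psi)$ are actually realised, although that is not needed for the stated equivalence.
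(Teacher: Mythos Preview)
The paper does not supply its own proof of this theorem: it is quoted verbatim from \cite{BGG16a} and used as input for the subsequent analysis. So there is nothing in the present paper to compare your argument against line by line. That said, the surrounding material in Section~\ref{S5} (the explicit families $\mathcal{M}_{2,0}^{\mathbf{u},\mathbf{v}}$, $\mathcal{M}_{0,2}^{\mathbf{u},\mathbf{v}}$, $\mathcal{M}_{1,1}^{\mathbf{u},\mathbf{v}}$, $\widetilde{\mathcal{M}}_{1,1}^{\mathbf{u},\mathbf{v}}$, $\widehat{\mathcal{M}}_{1,1}^{\mathbf{u},\mathbf{v},\mathbf{w}}$ and the accompanying pictures of the $(\psi,\Psi)$-image) strongly indicates that the proof in \cite{BGG16a} is by explicit parametrization: one writes down, signature by signature, a two-parameter family of normal forms on which $(\psi,\Psi)$ is computed directly and shown to separate linear classes. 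Your outline is conceptually cleaner but not the route taken there.

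On the substance of your proposal, the reduction to $(3)\Rightarrow(2)$ via Lemma~\ref{L7.1} and the tensoriality of $\rho$, $\tilde\rho$ under $\operatorname{GL}(2,\mathbb{R})$ are fine. The two steps you flag as ``the main obstacle'' are, however, genuine gaps rather than mere bookkeeping. First, the assertion that $\Gamma\mapsto(\rho[\Gamma],\tilde\rho[\Gamma])$ is injective on the rank-two locus is precisely the content of the theorem you are trying to prove: six quadratic equations in six unknowns will typically have several solutions, and nothing in your sketch rules out two non-linearly-equivalent $\Gamma$'s sharing both $\rho$ and $\tilde\rho$. Second, in the indefinite case two $\rho_0$-self-adjoint operators with the same characteristic polynomial need not be $H$-conjugate (the nilpotent Jordan block versus the zero operator is the standard counterexample), so equality of $(\psi,\Psi)$ does not by itself force equality of the $H$-orbit of $\tilde\rho$. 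Both issues are exactly what the explicit normal-form analysis in \cite{BGG16a} is designed to handle; without either carrying that out or supplying an independent argument for the injectivity and the Segre-type control, your sketch does not close.
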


\begin{remark}\rm
We show the image of $(\psi,\Psi)$ below; the region with red horizontal lines on the far right is the moduli space models with
positive definite Ricci tensor, the region with blue vertical lines on far left is the moduli space of models with negative definite Ricci tensor, and the central region between in white corresponds to
models with indefinite Ricci tensor.
\medbreak\begin{center}{\includegraphics[height=3.5cm,keepaspectratio=true]{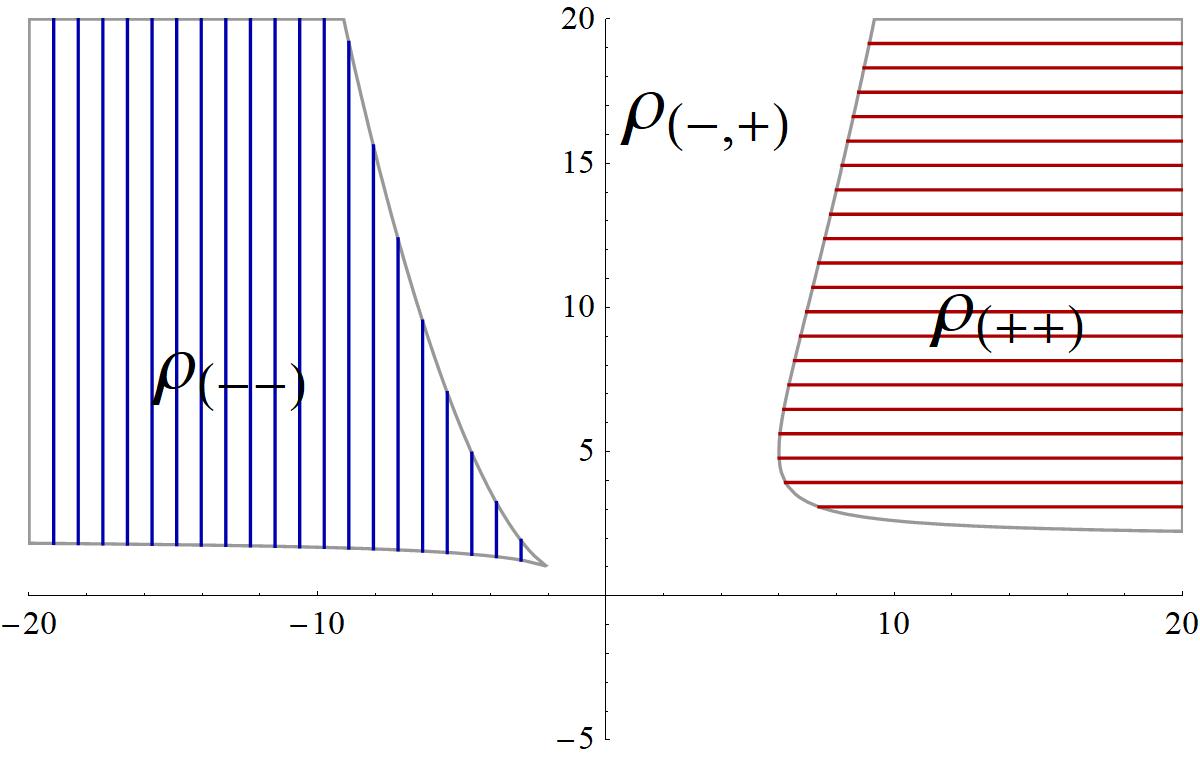}}
\captionof{figure}{Image of $(\psi,\Psi)$ for the negative definite (blue), indefinite (white) and positive definite (red) moduli spaces.}
\end{center}
\medbreak\noindent
Note that although $(\psi,\Psi)$ is 1-1 on each of the 3 cases separately, the images intersect along smooth curves.
This does not mean that $(\psi,\Psi)$ is not 1-1
on their respective domains.
\end{remark}

In what follows, $p_1(\vec x)=b_1x^1+b_2x^2$ for $(b_1,b_2)\ne(0,0)$ will denote a non-trivial linear polynomial $p_2(\vec x)=b_1x^1+b_2x^2+b_{11} (x^1)^2+2b_{12}x^1x^2+b_{22}(x^2)^2$ for $(b_{11},b_{12},b_{22})\ne(0,0,0)$ will denote a non-trivial
quadratic polynomial,  $L$ will be the line through $(7,10)$ with slope 4 and $\beta_i(\vec x)=b_{1,i}x^1+b_{2,i}x^2$ are to be distinct where
$(b_{i,1},b_{i,2})\in\mathbb{R}^2-\{(0,0)\}$. By Theorem \ref{T1.3} the only case for consideration is $\mu=-1$.
The main result of this section is the following:

\begin{theorem}
Let $\mathcal{M}$ be a Type~$\mathcal{A}$ surface model with $\dim\{\mathfrak{K}(\mathcal{M})\}=2$. Then $\dim\{E(-1,\nabla)\}=3$ and one of the following holds
\begin{enumerate}
\item If $(\psi,\Psi)\notin L$, then
 $E(-1,\nabla)=\operatorname{Span}\{e^{\vec\beta_1\cdot\vec x},e^{\vec\beta_2\cdot\vec x},e^{\vec\beta_3\cdot\vec x}\}$.
\item If $(\psi,\Psi)\in L-\{7,10\}$, then  $E(-1,\nabla)=\operatorname{Span}\{e^{\vec\beta_1\cdot\vec x},p_1(\vec x)e^{\vec\beta_1\cdot\vec x},e^{\vec\beta_2\cdot\vec x}\}$.
\item If $(\psi,\Psi)=(7,10)$, then $E(-1,\nabla)=e^{\vec\beta_1\cdot\vec x}\operatorname{Span}\{1,p_1(\vec x),p_2(\vec x)\}$.
\end{enumerate}
\end{theorem}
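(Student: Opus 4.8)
The plan is to exploit the module structure of $E(-1,\nabla)$ over the commuting coordinate vector fields $\partial_{x^1},\partial_{x^2}$ (Theorem~\ref{T1.1}) together with the quasi-polynomial normal form of Theorem~\ref{T2.5}~(1), and to reduce the three alternatives to a count of distinct ``exponents''. By the preceding discussion only the eigenvalue $\mu=-1$ needs to be analysed, and since $\mathcal{M}$ is strongly projectively flat with $\operatorname{Rank}\{\rho\}=2$, Theorem~\ref{T1.2} gives $\dim\{E(-1,\nabla)\}=3$; so it suffices to exhibit the three possible shapes of this $3$-dimensional space. As $\rho$ is symmetric and non-degenerate, a linear change of coordinates -- admissible for Type~$\mathcal{A}$ models and under which $\psi$ and $\Psi$ are invariant -- normalises $\rho=\operatorname{diag}(\varepsilon_1,\varepsilon_2)$ with $\varepsilon_i=\pm1$; the three remaining free parameters among the $\Gamma_{ij}{}^k$ then carry the two-dimensional moduli recorded by $(\psi,\Psi)$, and in these coordinates $\psi$ and $\Psi$ are explicit polynomials in $\varepsilon_1,\varepsilon_2$ and the $\Gamma_{ij}{}^k$.

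Next I would introduce the exponent system. Substituting $f=e^{\alpha_1x^1+\alpha_2x^2}$ into Equation~(\ref{E1.a}) with $\mu=-1$ yields
\begin{equation}
\alpha_i\alpha_j-\Gamma_{ij}{}^k\alpha_k+\rho_{ij}=0,\qquad (i,j)\in\{(1,1),(1,2),(2,2)\}.\tag{$\ast$}
\end{equation}
By Theorem~\ref{T2.5}~(1) every element of $E_{\mathbb{C}}(-1,\nabla)$ is a finite sum of terms $e^{\vec\alpha\cdot\vec x}p(\vec x)$ with $p$ polynomial, and $\partial_{x^i}(e^{\vec\alpha\cdot\vec x}p)=e^{\vec\alpha\cdot\vec x}(\alpha_i p+\partial_{x^i}p)$, so $E_{\mathbb{C}}(-1,\nabla)$ splits into joint generalised eigenspaces $E_{\vec\alpha}$ indexed by the solutions $\vec\alpha$ of $(\ast)$, with $\sum_{\vec\alpha}\dim E_{\vec\alpha}=3$. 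Hence $(\ast)$ has exactly $3$, $2$ or $1$ distinct solutions, and these are precisely the situations of Assertions~(1), (2) and (3). To identify the polynomial prefactors I would prove two elementary linear-algebra facts: on a joint generalised eigenspace, $\partial_{x^1}-\alpha_1$ and $\partial_{x^2}-\alpha_2$ act as commuting nilpotents whose common kernel is $1$-dimensional, spanned by $e^{\vec\alpha\cdot\vec x}$ (by Theorem~\ref{T1.1}), so the eigenspace is a single Jordan chain; consequently an $E_{\vec\alpha}$ of dimension $2$ has prefactors $\{1,p_1\}$ with $\deg p_1=1$, and one of dimension $3$ has prefactors $\{1,p_1,p_2\}$ with $\deg p_1=1$, $\deg p_2=2$. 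The competing shape $e^{\vec\alpha\cdot\vec x}\{1,x^1,x^2\}$ is ruled out because it would force $\rho_s=0$, contradicting $\operatorname{Rank}\{\rho\}=2$. The reality statements for the $\beta_i$ and for the coefficients then follow by conjugating solutions, exactly as in the proof of Theorem~\ref{T2.6}~(2).

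Finally I would carry out the elimination in $(\ast)$. Writing $Q_{ij}$ for the left-hand sides, one solves $Q_{12}=0$ and $Q_{22}=0$ for $\alpha_1$ as a rational function of $\alpha_2$ and equates the results to obtain a cubic $R(\alpha_2)=0$ with coefficients polynomial in the $\Gamma_{ij}{}^k$; using the normalisation $\rho=\operatorname{diag}(\varepsilon_1,\varepsilon_2)$ one checks that $Q_{11}=0$ is then a consequence, so (after the routine care with the degenerate subcases $\Gamma_{22}{}^1=0$ or a root $\alpha_2=\Gamma_{12}{}^1$) the distinct solutions of $(\ast)$ are in bijection with the distinct roots of $R$. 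A computation with $\varepsilon_1,\varepsilon_2$ kept symbolic -- which handles the signatures $(2,0)$, $(1,1)$, $(0,2)$ uniformly -- shows that the discriminant of $R$ is, up to a nonvanishing factor, an affine function of $(\psi,\Psi)$ whose zero locus is exactly the line $L$ through $(7,10)$ of slope $4$, and that $R$ has a triple root precisely when $(\psi,\Psi)=(7,10)$. Feeding this back: off $L$ there are three distinct exponents, giving Assertion~(1); on $L\setminus\{(7,10)\}$ there are two, the repeated one contributing a degree-one prefactor, giving Assertion~(2); at $(7,10)$ there is a single exponent, contributing prefactors of degrees $1$ and $2$, giving Assertion~(3).

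The step I expect to be the main obstacle is this last one: producing the cubic $R$ in a usable form, confirming that $Q_{11}$ is genuinely redundant once $\rho$ is normalised, and -- above all -- verifying that its repeated-root locus is the specific affine line $L$ through $(7,10)$ with slope $4$, with the triple-root degeneracy exactly at $(7,10)$, all done uniformly over the three signature types and across the degenerate subcases. If a uniform computation proves unwieldy, a safe fallback is to run the same analysis on the explicit normal forms of Type~$\mathcal{A}$ surfaces with $\operatorname{Rank}\{\rho\}=2$ provided in \cite{BGG16a}: compute $E(-1,\nabla)$ family by family and match the outcome against $(\psi,\Psi)$, the Jordan-chain lemmas above then supplying the uniform description~(1)--(3).
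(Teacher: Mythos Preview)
Your structural framing---decomposing $E_{\mathbb{C}}(-1,\nabla)$ into joint generalised eigenspaces for $\partial_{x^1},\partial_{x^2}$ and reading off the polynomial prefactors from the Jordan structure---is correct and is a genuinely cleaner organisation than what the paper does. The paper never isolates this module-theoretic picture; instead it proceeds entirely by explicit computation. Its proof runs in the opposite order from yours: it first \emph{exhibits} concrete Type~$\mathcal{A}$ models realising Assertions~(2) and~(3) at every point of $L$ (one parametrised family for $L\setminus\{(7,10)\}$, and two separate models at $(7,10)$ for the positive-definite and indefinite signatures), and only then turns to showing that Assertion~(1) holds off $L$. For the latter it does exactly your fallback: it takes the explicit parametrisations $\mathcal{M}_{2,0}^{\mathbf{u},\mathbf{v}}$, $\mathcal{M}_{0,2}^{\mathbf{u},\mathbf{v}}$, $\mathcal{M}_{1,1}^{\mathbf{u},\mathbf{v}}$, $\widetilde{\mathcal{M}}_{1,1}^{\mathbf{u},\mathbf{v}}$, $\widehat{\mathcal{M}}_{1,1}^{\mathbf{u},\mathbf{v},\mathbf{w}}$ from \cite{BGG16a}, solves $(\ast)$ in each, and checks that repeated exponents occur precisely when $(\psi,\Psi)\in L$. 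What you gain with the Jordan-chain lemma is a uniform explanation of \emph{why} the three shapes are the only possibilities; what the paper gains is that it never has to prove your two delicate claims (that $Q_{11}=0$ is redundant after normalising $\rho$, and that the discriminant of the cubic is affine in $(\psi,\Psi)$ up to a unit).

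Two small points. First, your claim that the configuration $e^{\vec\alpha\cdot\vec x}\{1,x^1,x^2\}$ forces $\rho_s=0$ is not quite right: it forces $\Gamma_{ij}{}^k=\alpha_i\delta_j^k+\alpha_j\delta_i^k$, whence $\rho=\alpha\otimes\alpha$ has rank at most one---still a contradiction with $\operatorname{Rank}\{\rho\}=2$, but not $\rho_s=0$. Second, the redundancy of $Q_{11}$ is not as automatic as you suggest. In the paper's parametrisations all three equations are genuinely used (e.g.\ in $\mathcal{M}_{0,2}^{\mathbf{u},\mathbf{v}}$ the equation $Q_{11}=0$ factors as $a_1\in\{\mathbf{u},1/\mathbf{u}\}$ and is not a consequence of the other two); the compatibility you need comes from strong projective flatness, but making this explicit is exactly the computation you flag as the obstacle. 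Your fallback is therefore not merely a safety net but is in fact what the paper does, and you should expect to carry it out signature by signature rather than uniformly.
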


\begin{remark}\rm We refer to Figure~1. The line $L$ does not intersect the moduli space of negative definite
Ricci tensors. The line $L$ intersects the moduli space of indefinite Ricci tensors in the ray $(7,10)+t(1,4)$ for $t\le0$ which points down and to the
left. The line $L$ intersects the moduli space of positive definite Ricci tensors in the ray $(7,10)+t(1,4)$ for $t\ge0$ which points up and to the right.
The right boundary of the moduli space of indefinite Ricci tensors and the left boundary of the moduli space of positive definite
Ricci tensors is given by the curve $\sigma_+(t):=(4 t^2+\frac{1}{t^2}+2,4 t^4+4 t^2+2)$.
The point $(7,10)$, which corresponds to $t=1$ is an inflect point of this curve (see Figure~2). For $0<t<1$, the curve is convex to the right and for $1<t$, the
curve is convex to the left. The red curve is the boundary between the moduli space of indefinite and the moduli space of positive definite
Ricci tensors and the black line is the singular locus. We have adjusted the scale when showing the positive definite moduli space to show
the divergence of the two curves.
\medbreak\centerline{\includegraphics[height=4.75cm,keepaspectratio=true]{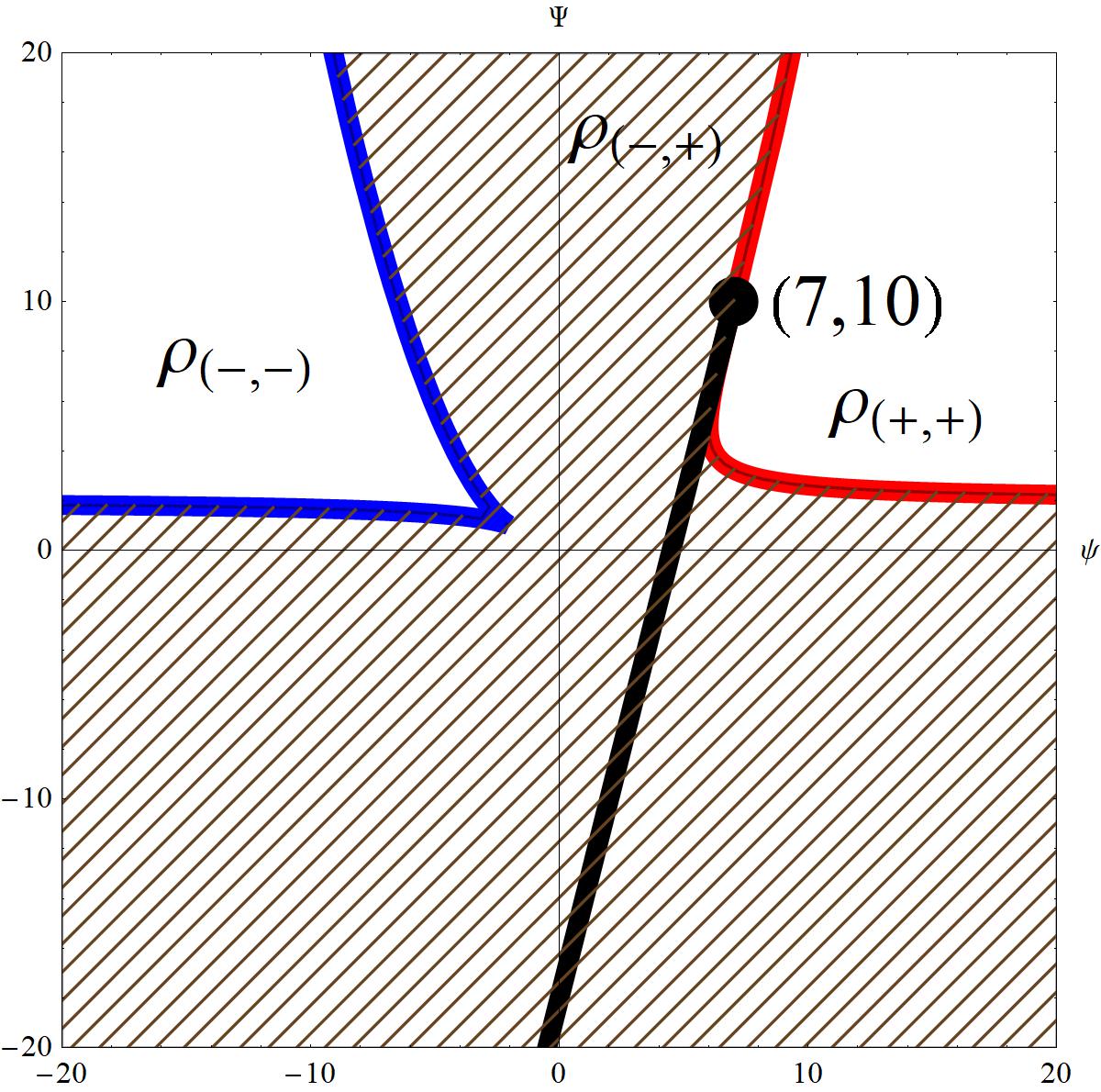}\qquad
\includegraphics[height=4.5cm,keepaspectratio=true]{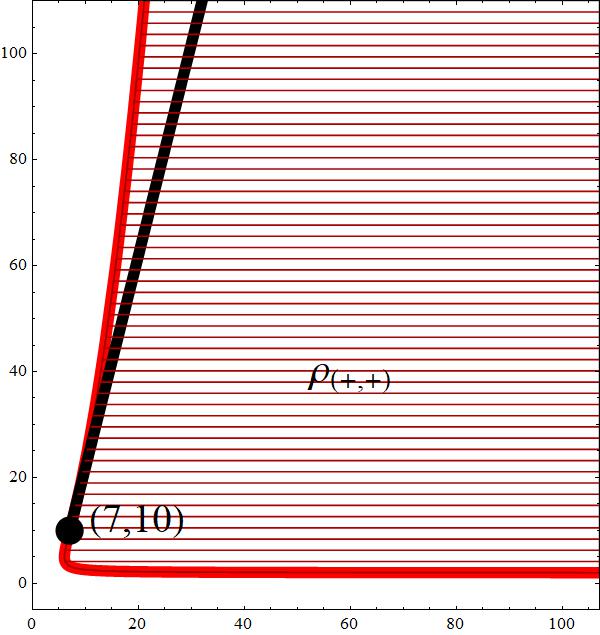}}
\captionof{figure}{ The singular locus for $\rho$ indefinite (left) and  for $\rho$ positive definite (right).}
\end{remark}

\begin{proof} We divide the proof into several parts.

\subsubsection*{Step 1. The line $L$} We show the assertions of the Theorem hold for the line $L$ as follows.
\begin{enumerate}
\item Take
$\Gamma_{11}{}^1=\Gamma_{22}{}^1=1-{c}$, $ \Gamma_{11}{}^2=-{c}$, to
obtain $(\psi,\Psi)=(7-\frac1{c},10-\frac4{c})$  and parametrize $L-(7,10)$. We compute
$\rho=\left(\begin{array}{cc}-c&c\\ c&1-c\end{array}\right)$. This is positive definite if ${c}<0$ and negative definite if ${c}>0$. We have that
$E(-1, \nabla)=\operatorname{Span}\{e^{x^1},e^{x^2},e^{x^2}(x^1+x^2(-1+\frac1{c}))\}$. Thus if $\mathcal{M}\in L-(7,10)$, the conclusion
of Assertion~(2) holds.
\item Take $\Gamma_{11}{}^1=2$, $\Gamma_{12}{}^2=\Gamma_{22}{}^1=1$, and $\Gamma_{11}{}^2=\Gamma_{12}{}^1=\Gamma_{22}{}^2=0$,
to obtain $(\psi,\Psi)=(7,10)$, $\rho=\operatorname{id}>0$, and
$E(-1,\nabla)=e^{x^1}\operatorname{Span}\{1,x^2,(2x^1+(x^2)^2)\}$. Thus if $(\psi,\Psi)=(7,10)$ and $\rho>0$, the conclusion of Assertion~(3) holds.
\item Take $\Gamma_{11}{}^1=\Gamma_{12}{}^1=\Gamma_{12}{}^2=\Gamma_{22}{}^2=-3$ and $\Gamma_{11}{}^2=\Gamma_{22}{}^1=1$ to
obtain $(\psi,\Psi)=(7,10)$, 
$E(-1,\nabla)=e^{-2(x^1+x^2)}\operatorname{Span}\{1,x^1-x^2,2x^1+(x^1-x^2)^2\}$,
and $\rho=8dx^1\otimes dx^2+8dx^2\otimes dx^1$. Thus if $(\psi,\Psi)=(7,10)$ and $\rho$
is indefinite the conclusion of Assertion~(3) holds.
\end{enumerate}

We complete the proof by showing that if $(\psi,\Psi)\notin L$, then Assertion~(1) holds. We examine the 3 moduli spaces separately;
 by Theorem \ref{T1.3} the only case for consideration is $\mu=-1$. In a particular case if $(\psi,\Psi)\in L$, we stop the analysis.

\subsubsection*{Step 2. The Ricci tensor is positive definite}
For $\mathbf{u}>0$ and $\mathbf{v}\ge0$, let
$$
\mathcal{M}_{0,2}^{\mathbf{u},\mathbf{v}}:
\Gamma_{11}{}^1=\textstyle \mathbf{u}+\frac{1}{\mathbf{u}},\ 
\Gamma_{11}{}^2=0,\ \Gamma_{12}{}^1=0,\ \Gamma_{12}{}^2=\mathbf{u},\ 
\Gamma_{22}{}^1=\mathbf{u},\ \Gamma_{22}{}^2=\mathbf{v}\,.
$$
By \cite{BGG16a}, if $\rho$ is positive definite, then $\mathcal{M}$ is linearly isomorphic to $\mathcal{M}_{0,2}^{\mathbf{u},\mathbf{v}}$.
Again, this parametrization is not 1-1. Let $f=e^{a_1x^1+a_2x^2}$.
Setting $\mathfrak{Q}_{-1} (f)=0$ yields
$$
\mathbf{u}a_1^2-a_1(1+\mathbf{u}^2)+ \mathbf{u}=0,\quad a_2(a_1-\mathbf{u})=0,\quad a_2^2-a_1\mathbf{u}-a_2\mathbf{v}+1=0\,.
$$
A direct computation shows:
\begin{enumerate}
\item If $\mathbf{u}=1$ and $\mathbf{v}=0$, one obtains $(\psi,\Psi)=(7,10)$.
\item If $\mathbf{u}=1$ and $\mathbf{v}\ne0$, then $E(-1,\nabla)=\operatorname{Span}\{e^{x^1},(x^2-x^1\mathbf{v})e^{x^1},e^{x^1+x^2\mathbf{v}}\}$. One obtains
$(\psi,\Psi)=(7,10)+\mathbf{v}^2(1,4)\in L$.
\item If $\mathbf{u}\ne1$ and $4\mathbf{u}^2+\mathbf{v}^2\ne4$, let ${\mathbf{s}}_\pm:=(\mathbf{v}\pm\sqrt{4\mathbf{u}^2+\mathbf{v}^2-4})/2$. Then\smallbreak 
$E(-1,\nabla)=\operatorname{Span}\{e^{\frac{x^1}{\mathbf{u}}},e^{\mathbf{u}x^1+{\mathbf{s}}_+x^2},e^{\mathbf{u}x^1+{\mathbf{s}}_-x^2}\}$.
\item If $\mathbf{u}\ne1$ and $4\mathbf{u}^2+\mathbf{v}^2=4$, we set $\mathbf{u}=\cos {\mathbf{t}}$ and $ \mathbf{v}=2 \sin {\mathbf{t}}$
 and obtain $(\psi,\Psi)=(7,10)+(\tan^2 \mathbf{t})(1,4)\in L$.
\end{enumerate}

\subsubsection*{Step 3. The Ricci tensor is negative definite} For $\mathbf{u}>0$ and $\mathbf{v}\ge0$, let
$$
\mathcal{M}_{2,0}^{\mathbf{u},\mathbf{v}}:\ \Gamma_{11}{}^1=\textstyle \mathbf{u}-\frac{1}{\mathbf{u}},\ \Gamma_{11}{}^2=0,\ 
\Gamma_{12}{}^1=0,\ \Gamma_{12}{}^2=\mathbf{u},\ 
\Gamma_{22}{}^1=\mathbf{u},\ \Gamma_{22}{}^2=\mathbf{v}\,.
$$
Again, this parametrization is not 1-1 but, by \cite{BGG16a}, if 
$\rho$ is negative definite, then $\mathcal{M}$ is linearly isomorphic to $\mathcal{M}_{2,0}^{\mathbf{u},\mathbf{v}}$.
Setting $\mathfrak{Q}_{-1} (f)=0$ yields
$$
 \mathbf{u}a_1^2+a_1(1-\mathbf{u}^2)- \mathbf{u}=0,\quad a_2(a_1-\mathbf{u})=0,\quad a_2^2-a_1\mathbf{u}-a_2\mathbf{v}-1=0\,.
$$
If $a_2=0$, then $a_1\mathbf{u}=-1$. 
If $a_2\ne0$, then $a_1=\mathbf{u}\ne0$ and the third equation above determines the possible values of $a_2$ to get
$$
E(-1, \nabla)=\operatorname{Span}\{e^{-\frac{x^1}{\mathbf{u}}},  e^{\mathbf{u}x^1+\frac12x^2(\mathbf{v}+\sqrt{4+4\mathbf{u}^2+\mathbf{v}^2})},e^{\mathbf{u}x^1+\frac12x^2(\mathbf{v}-\sqrt{4+4\mathbf{u}^2+\mathbf{v}^2})}\}\,.
$$

\subsubsection*{Step 4. The Ricci tensor is indefinite}

We must deal with several parametrizations in this setting.  Let
\medbreak
$\mathcal{M}_{1,1}^{\mathbf{u},\mathbf{v}}:=
\Gamma_{11}{}^1=\Gamma_{12}{}^2=\mathbf{u},\ \Gamma_{11}{}^2=
\Gamma_{22}{}^1=\sqrt{\mathbf{u}\mathbf{v}-1},\ \Gamma_{12}{}^1=\Gamma_{22}{}^2=\mathbf{v},\ \mathbf{u}\mathbf{v}>1$.
\medbreak
$\widetilde{\mathcal{M}}_{1,1}^{\mathbf{u},\mathbf{v}}:=
\Gamma_{11}{}^1=\Gamma_{12}{}^2=\mathbf{u},\ \Gamma_{11}{}^2=-\Gamma_{22}{}^1 =\sqrt{1-\mathbf{u}\mathbf{v}},\ 
\Gamma_{12}{}^1=\Gamma_{22}{}^2=\mathbf{v}$, $\mathbf{u}\mathbf{v}<1$.
\medbreak
$\widehat{\mathcal{M}}_{1,1}^{\mathbf{u},\mathbf{v},\mathbf{w}}:=
 \Gamma_{11}{}^1=\Gamma_{12}{}^2=\mathbf{u},\ \Gamma_{11}{}^2=0,\
 \Gamma_{22}{}^1=\mathbf{w},\ \Gamma_{12}{}^1=\Gamma_{22}{}^2=\mathbf{v}, \mathbf{u v} =1.
$
\medbreak\noindent The Ricci tensor then takes the form $\rho=\left(\begin{array}{ll}0&1\\1&0\end{array}\right)$. We study cases depending on the value of $\mathbf{u}\mathbf{v}$.

\medbreak\centerline{
\includegraphics[height=4.5cm,keepaspectratio=true]{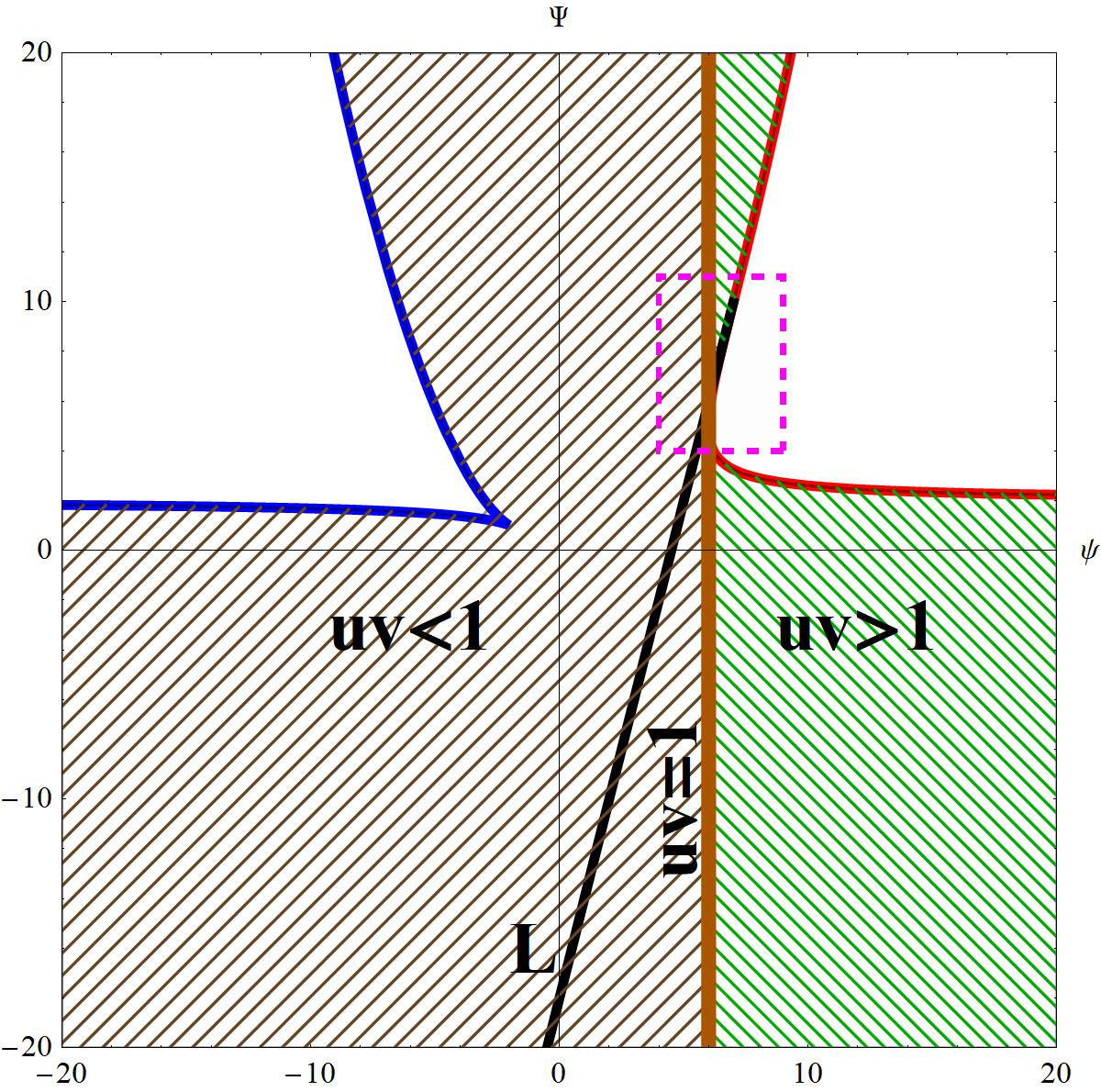}
\qquad \includegraphics[height=4.5cm,keepaspectratio=true]{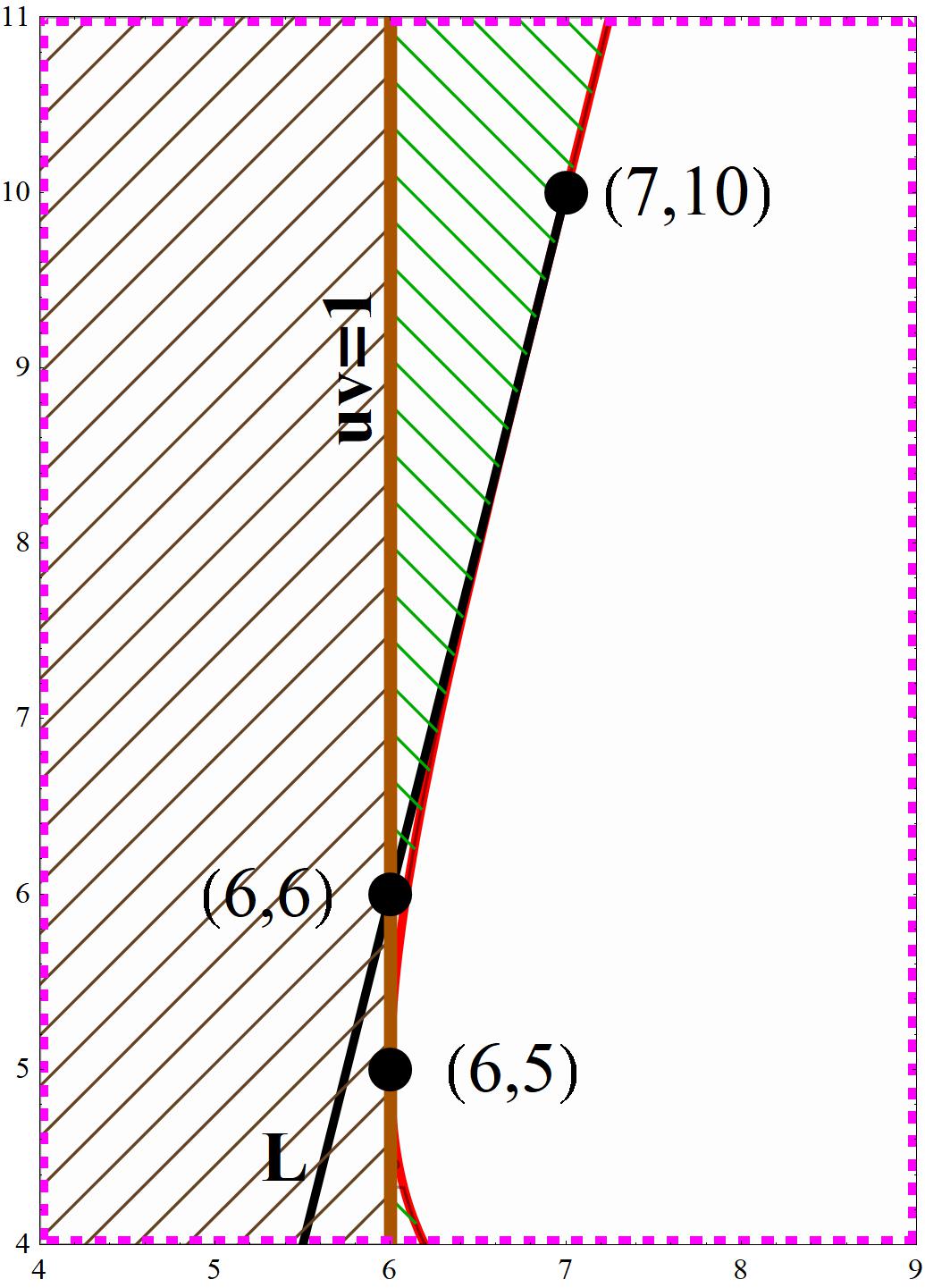}
}
\captionof{figure}{ The singular locus for $\rho$ indefinite is divided into $3$ regions corresponding to $\mathbf{u}\mathbf{v}<1$, $\mathbf{u}\mathbf{v}=1$ and $\mathbf{u}\mathbf{v}>1$.}

\subsubsection*{Case 4.1. The parametrization $\mathcal{M}_{1,1}^{\mathbf{u},\mathbf{v}}$ for $\mathbf{u}\mathbf{v}>1$}  
We set $\mathfrak{Q}_{-1} (f)=0$ to obtain an equation
$a_1^2-a_1\mathbf{u}-a_2\sqrt{\mathbf{u}\mathbf{v}-1}=0$ which determines $a_2$. Substituting this value yields a real
equation 
\begin{equation}\label{E5.a}
0=(a_1)^3-2 (a_1)^2 \mathbf{u}+a_1 \left(\mathbf{u}^2-\mathbf{v} \sqrt{\mathbf{u} \mathbf{v}-1}\right)+\sqrt{\mathbf{u} \mathbf{v}-1}\,.
\end{equation}
Generically, Equation~(\ref{E5.a}) has 3 distinct roots which will give rise to 3 different exponentials spanning $E(-1,\nabla)$ and Assertion~(1) will hold.
Equation~(\ref{E5.a}) has
a triple root when $\mathbf{u}=\mathbf{v}=-\frac3{2\sqrt2}$ and one obtains $(\psi,\Psi)=(7,10)$.
Finally, we examine when this equation has a double root.  We set $\mathbf{u}={\mathbf{t}}\sqrt{\mathbf{s}^2+1}$ and $\mathbf{v}=\frac{1}{\mathbf{t}}\sqrt{\mathbf{s}^2+1}$ for $\mathbf{s}>0$. 
This yields
$$
\Gamma_{11}{}^{1}=\Gamma_{12}{}^2={\mathbf{t}}\sqrt{1+\mathbf{s}^2},\ \Gamma_{12}{}^1=\Gamma_{22}{}^2=\frac{1}{\mathbf{t}}\sqrt{1+\mathbf{s}^2},
\Gamma_{11}{}^2=\Gamma_{22}{}^1=\mathbf{s}\,.
$$
Thereby, Equation  \eqref{E5.a} becomes
 \[a_1^3- a_1^2 2 \mathbf{t} \sqrt{\mathbf{s}^2+1} + a_1 \left(\mathbf{s}^2 t^2-\frac{\mathbf{s} \sqrt{\mathbf{s}^2+1}}{\mathbf{t}}+\mathbf{t}^2\right)+\mathbf{s}=0.\]

We suppose $c_2$ is a double root and $c_1$ is a single root. Therefore, the coefficients in $(a_1-c_1)(a_1-c_2)^2$ and in the above equation must be the same and thus we obtain the relations
$\mathbf{s}=-c_1(c_2)^2$, ${\mathbf{t}}=\frac{c_1+2c_2}{2\sqrt{1+(c_1)^2(c_2)^4}}$, and $c_1=\frac{2c_2}{1+8(c_2)^6}$. In this case,
$(\psi,\Psi)=(6,6)+\frac{32(c_2)^6}{(1+8(c_2)^6)^2}(1,4)\in L$ 
corresponds to the segment with endpoints $(6,6)$ and $(7,10)$. 

\subsubsection*{Case 4.2. The parametrization $\widetilde{\mathcal{M}}_{1,1}^{\mathbf{u},\mathbf{v}}$ for $\mathbf{u}\mathbf{v}<1$} 
We set $\mathfrak{Q}_{-1} (f)=0$ to obtain an equation
$(a_1)^2-a_1\mathbf{u}-a_2\sqrt{1-\mathbf{u}\mathbf{v}}=0$ which determines $a_2$. Substituting again we only have one more relation 
\begin{equation}\label{E5.b}
0=(a_1)^3-2(a_1)^2\mathbf{u}+a_1(\mathbf{u}^2-\mathbf{v}\sqrt{1-\mathbf{u}\mathbf{v}})+\sqrt{1-\mathbf{u}\mathbf{v}}\,.
\end{equation}
A calculation shows this equation does not have a triple root. 
We make the substitution $\mathbf{u}={\mathbf{t}}\sqrt{1-\mathbf{s}^2}$ and $\mathbf{v}=\frac{1}{\mathbf{t}}\sqrt{1-\mathbf{s}^2}$  for $\mathbf{s}>0$. Then, we use the same argument as in Case 4.2 to show that Equation~(\ref{E5.b}) has a root $c_2$ with multiplicity $2$
and a root $c_1$ with multiplicity $1$. In this case,
$\mathbf{s}=-c_1(c_2)^2$, ${\mathbf{t}}=(c_1+2c_2)/(2\sqrt{1-(c_1)^2(c_2)^4})$, and $c_1=2c_2/(1-8(c_2)^6)$. We may then compute
$(\psi,\Psi)=(6,6)-\frac{32(c_2)^6}{(1-8(c_2)^6)^2}(1,4)$ which lies on the line $L$ and corresponds to the half-line  starting at $(6,6)$.
\subsubsection*{Case 4.3. The locus $\mathbf{u}\mathbf{v}=1$}
Extending the parametrizations $\mathcal{M}_{1,1}^{\mathbf{u},\mathbf{v}}$ and $\widetilde{\mathcal{M}}_{1,1}^{\mathbf{u},\mathbf{v}}$ for $\mathbf{u}\mathbf{v}=1$ leads to $(\psi,\Psi)=(6,5)$ (see \cite{BGG16a}), which does not suffice for our purposes. Therefore we consider the slightly more general parametrization
$\widehat{\mathcal{M}}_{1,1}^{\mathbf{u},\mathbf{v},\mathbf{w}}$ for $\mathbf{u v} =1$ given by
$$
\Gamma_{11}{}^1=\Gamma_{12}{}^2=\mathbf{u},\ \Gamma_{11}{}^2=0,\
\Gamma_{22}{}^1=\mathbf{w},\ \Gamma_{12}{}^1=\Gamma_{22}{}^2=\frac{1}{\mathbf{u}}.
$$
\noindent
The image by $(\psi,\Psi)$ of this parametrization is $(6,5)- 4\mathbf{u}^3\mathbf{w}\,(0,4)$, so $\Gamma_{22}{}^1=\mathbf{w}$ parametrizes the vertical ray $x=6$ (see Figure 3).  

Setting $f=e^{a_1x^1+a_2x^2}$ and $\mathfrak{Q}_{-1} (f)=0$ yields
$$
\textstyle a_1(a_1-\mathbf{u})=0,\quad a_1(a_2-\frac{1}{\mathbf{u}})-a_2\mathbf{u}+1=0,\quad a_2(a_2-\frac{1}{\mathbf{u}})-a_1 \mathbf{w} =0\,.
$$
From the first equation either $a_1=0$ or $a_1=\mathbf{u}$. 
If $a_1=0$, then $a_2=\frac{1}{\mathbf u}$. If $a_1=\mathbf u$, then the remaining equation is $a_2^2- a_2 \frac{1}{\mathbf{u}}-\mathbf{u w} =0$. The discriminant vanishes if $\mathbf{w}=\frac{-1}{4\mathbf{u}^3}$, in 
which case $(\psi,\Psi)=(6,6)\in L$. Otherwise, there are two different solutions 
$s_\pm=\frac{1\pm \sqrt{1 + 4 \mathbf{u}^3 \mathbf{w}}}{2 \mathbf{u}}$, so
$E(-1,\nabla)=\operatorname{Span}\{ e^{x^2/\mathbf{u}}, e^{\mathbf{u}x^1+s_+ x^2}, e^{\mathbf{u}x^1+s_- x^2}\}$.\hfill\ 
\end{proof}

\subsection{Type~$\mathcal{B}$ surface models with $\boldsymbol{\dim\{\mathfrak{K}(\mathcal{M})\}=2}$}

First of all, observe that the Ricci tensor of a Type~$\mathcal{B}$ surface is not symmetric in the generic situation. Indeed, the Ricci tensor may be skew-symmetric. 
Define the following Type~$\mathcal{B}$ structures:
$$
\begin{array}{ll}
\mathcal{Q}_c:&
C_{11}{}^1=0\,,\,\, C_{11}{}^2=c\,,\,\, C_{12}{}^1=1\,,\,\, C_{12}{}^2=0\,,\,\, C_{22}{}^1=0\,,\,\, C_{22}{}^2=1, \,\,\,\, c\in\mathbb{R}.
\\ 
\noalign{\medskip}
\mathcal{P}^\pm_{0,c}:&
C_{11}{}^1=1\mp c^2,\, C_{11}{}^2=c,\, C_{12}{}^1=0,\, C_{12}{}^2=\mp c^2,\ C_{22}{}^1=\pm 1,\, C_{22}{}^2=\pm 2c,
\\ 
\noalign{\medskip}
&c>0.
\end{array}
$$ 
Then $\rho_s=0$ and thus the quasi-Einstein equation \eqref{E1.a} reduces to the Yamabe equation $\mathcal{H}f=0$, so $E(\mu,\nabla)=E(0,\nabla)$.
The following result summarizes the situation in this setting (see Lemma 4.6 and Theorem 4.12 in \cite{BGG16}).

\begin{theorem} \label{T5.5}
Let $\mathcal{M}$ be a Type~$\mathcal{B}$ surface with $\rho_s=0$. Then $\mathcal{M}$ is linearly isomorphic to a $\mathcal{Q}_c$ or a $\mathcal{P}^\pm_{0,c}$ surface. Moreover,
\begin{enumerate}
\item $\dim\{\mathfrak{K}(\mathcal{P}^+_{0,3/\sqrt{2}})\}=3$, and $\mathcal{P}^+_{0,3/\sqrt{2}}$ is linearly isomorphic to $\mathcal{N}_2^\frac 12$.
\item $\dim\{\mathfrak{K}(\mathcal{Q}_{c})\}=2$, and $\dim\{\mathfrak{K }(\mathcal{P}^\pm_{0,c})\}=2$, otherwise.
\item $E(\mu,\nabla)=E(0,\nabla)=\operatorname{Span}\{ 1\}$.
\end{enumerate}
\end{theorem}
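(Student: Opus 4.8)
The plan: Assertions~(1) and~(2) are exactly the content of Lemma~4.6 and Theorem~4.12 of \cite{BGG16} — the classification of the Type~$\mathcal{B}$ surfaces with $\rho_s=0$ into the families $\mathcal{Q}_c$ and $\mathcal{P}^\pm_{0,c}$, the linear isomorphism $\mathcal{P}^+_{0,3/\sqrt{2}}\cong\mathcal{N}_2^{1/2}$, and the computation of $\dim\{\mathfrak{K}(\mathcal{M})\}$ — so I would simply cite these and concentrate the argument on Assertion~(3).

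For Assertion~(3) the crucial observation is that the hypothesis $\rho_s=0$ makes the right-hand side $\mu f\rho_s$ of the affine quasi-Einstein equation~\eqref{E1.a} vanish identically. Hence for every $\mu\in\mathbb{R}$ that equation is just $\mathcal{H}f=0$, and therefore $E(\mu,\nabla)=E(0,\nabla)=\ker\{\mathcal{H}\}$, independently of $\mu$. Since $\mathcal{H}1=0$ we have $\operatorname{Span}\{1\}\subseteq E(0,\nabla)$, so it remains only to prove $\dim\{E(0,\nabla)\}=1$.

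To get the upper bound I would invoke Theorem~\ref{T1.2}. First, a direct computation of the Ricci tensor of $\mathcal{Q}_c$ and $\mathcal{P}^\pm_{0,c}$ from their Christoffel symbols shows $\rho_a\neq0$; in particular $\rho\neq0$, so none of these surfaces is flat. Now, since $E(0,\nabla)=E(1,\nabla)$ and $\mathcal{M}$ is non-flat, Theorem~\ref{T1.2} gives $\dim\{E(1,\nabla)\}\neq3$, so $\dim\{E(-1,\nabla)\}=\dim\{E(0,\nabla)\}\le2$; and non-flatness forces $\dim\{E(-1,\nabla)\}\neq2$ by the same theorem. Hence $\dim\{E(-1,\nabla)\}\le1$, so $E(-1,\nabla)=\operatorname{Span}\{1\}$, and since all the eigenspaces coincide, $E(\mu,\nabla)=E(0,\nabla)=\operatorname{Span}\{1\}$ for every $\mu$. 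Alternatively, $\rho=\rho_a$ is skew-symmetric, hence not a symmetric $2$-tensor, so $\mathcal{M}$ is not strongly projectively flat, and Theorem~\ref{T1.2} then directly yields $\dim\{E(-1,\nabla)\}\notin\{2,3\}$. I do not expect a real obstacle here: once the reduction ``$\rho_s=0$ $\Rightarrow$ $E(\mu,\nabla)$ is independent of $\mu$'' is in hand, Assertion~(3) is a two-line deduction from Theorem~\ref{T1.2}. The only point demanding care — because Assertion~(3) would fail for a flat member — is confirming that every surface in the families $\mathcal{Q}_c$ and $\mathcal{P}^\pm_{0,c}$ is non-flat, i.e. $\rho_a\neq0$, which is the one explicit computation the proof really needs.
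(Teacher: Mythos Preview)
Your proposal is correct and follows the paper's own treatment. The paper gives no independent proof of Theorem~\ref{T5.5}: the paragraph preceding it already records the reduction $E(\mu,\nabla)=E(0,\nabla)$ when $\rho_s=0$, and the theorem is then stated as a summary ``(see Lemma~4.6 and Theorem~4.12 in \cite{BGG16})''. You take the same route for Assertions~(1) and~(2) and make the same reduction for~(3). Where you go slightly further than the paper is in giving a self-contained argument, via Theorem~\ref{T1.2}, that $\dim\{E(0,\nabla)\}=1$; the paper simply defers this to \cite{BGG16}. Your argument is sound: the non-flatness check you flag is immediate from the formula $\rho_a=\tfrac12(x^1)^{-2}(C_{12}{}^1+C_{22}{}^2)\,dx^1\wedge dx^2$ (recorded in the proof of Theorem~\ref{T5.7}), which gives $\rho_a\neq0$ for every $\mathcal{Q}_c$ and for every $\mathcal{P}^\pm_{0,c}$ with $c>0$.
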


 In Section~\ref{S5.2.1}, we discuss Yamabe solitons for generic Type~$\mathcal{B}$ surfaces with $\dim\{\mathfrak{K}(\mathcal{M})\}=2$. These
correspond to the case $\mu=0$ and are intimately related with affine gradient Ricci solitons.  In Section~\ref{S5.2.2}, we study projectively flat Type~$\mathcal{B}$ surfaces. In Section~\ref{S5.2.3},
we use these results to complete the description of the solutions to Equation~(\ref{E1.a}) for affine surfaces of Type~$\mathcal{B}$.

\subsubsection{Type~$\mathcal{B}$ Yamabe solitons}\label{S5.2.1} 
Yamabe solitons are defined by $\mathcal{H}f=0$, i.e. $\mu=0$. By Lemma~4.1 of \cite{BGG16}, $R_{ij}(df)=0$, so 
 $df$ belongs to the kernel of the curvature operator.
This provides examples of Type~$\mathcal{B}$ surfaces with $\dim\{\mathfrak{K}(\mathcal{M})\}=2$ where the Ricci tensor is of rank one.

\begin{theorem}\label{T5.6}
Let $\mathcal{M}$ be a Type~$\mathcal{B}$ surface with $\dim\{\mathfrak{K}(\mathcal{M})\}=2$. 
We have $E(0,\nabla)=\operatorname{Span}\{1\}$ except in the following cases where we
also require $\tilde\rho:=(x^1)^2\rho$ to be non-zero.
\begin{enumerate}
\item $C_{11}{}^1=-1$, $C_{12}{}^1=0$, $C_{22}{}^1=0$, $C_{22}{}^2\neq 0$, $E(0,\nabla)=\operatorname{Span}\{1,\log(x^1)\}$.
\smallbreak $\tilde\rho_{11}=-(C_{12}{}^2)^2+C_{12}{}^2+C_{11}{}^2 C_{22}{}^2$, 
$\tilde\rho_{12}=C_{22}{}^2$, $\tilde\rho_{21}=\tilde\rho_{22}=0$.
\smallbreak\item $(C_{11}{}^2,C_{12}{}^2,C_{22}{}^2)=(0,0,0)$, $E(0,\nabla)=\operatorname{Span}\{1,x^2\}$,
\smallbreak $\tilde\rho_{11}=0$, $\tilde\rho_{12}=0$, $\tilde\rho_{21}=-C_{12}{}^1$, $\tilde\rho_{22}=(C_{11}{}^1-1) C_{22}{}^1-(C_{12}{}^1)^2$.
\smallbreak\item
$(C_{11}{}^2,C_{12}{}^2,C_{22}{}^2)=-c_1(C_{11}{}^1,C_{12}{}^1,C_{22}{}^1)$, $E(0,\nabla)=\operatorname{Span}\{1,x^2+c_1x^1\}$,
\smallbreak 
$\tilde\rho_{21}= c_1 C_{11}{}^1C_{22}{}^1 -C_{12}{}^1(1+c_1 C_{12}{}^1)$, \  $\tilde\rho_{11}=c_1\tilde\rho_{21}$,
\smallbreak 
$\tilde\rho_{22}=C_{22}{}^1(C_{11}{}^1-1)-(C_{12}{}^1)^2,\ \tilde\rho_{12}=c_1\tilde\rho_{22}$.
\smallbreak\item $C_{12}{}^1=0$,
$C_{22}{}^1=0$, $C_{11}{}^1\ne-1$, $C_{22}{}^2\neq 0$, $E(0,\nabla)=\operatorname{Span}\{1,(x^1)^{C_{11}{}^1+1}\}$,
\smallbreak $\tilde\rho_{11}=-(C_{12}{}^2)^2+C_{11}{}^1 C_{12}{}^2+C_{12}{}^2
+C_{11}{}^2 C_{22}{}^2$, $\tilde\rho_{12}=C_{22}{}^2 $, $\tilde\rho_{21}=\tilde\rho_{22}=0$.
\end{enumerate}
\end{theorem}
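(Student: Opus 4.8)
The plan is to exploit the structure of Type~$\mathcal{B}$ surfaces together with the general constraint that Yamabe solitons come from the kernel of the curvature operator. Since $\mu=0$, the equation is $\mathcal{H}f=0$, and by Lemma~4.1 of \cite{BGG16} any non-constant $f\in E(0,\nabla)$ satisfies $R_{ij}(df)=0$, so $df$ lies pointwise in $\ker R$. First I would observe that since $1\in E(0,\nabla)$ always, the content is in determining when $\dim\{E(0,\nabla)\}\geq 2$. By Theorem~\ref{T1.1}, $\dim\{E(0,\nabla)\}\le 3$; but by Theorem~\ref{T1.2}, $\dim=3$ forces $\mu=-1$, which is excluded here, so $\dim\{E(0,\nabla)\}\in\{1,2\}$ and we are looking for exactly one extra solution.

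Next I would bring in Theorem~\ref{T2.6}, which catalogs the possible forms of solutions on a non-flat Type~$\mathcal{B}$ model. For $\dim\{E(0,\nabla)\}=2$, the relevant cases are (1b-i) giving $f=(x^1)^\alpha\{x^2+c_1x^1\}$ with $E=\operatorname{Span}\{f,(x^1)^\alpha\}$, and (1d-ii)/(1d-iv) giving spans of the form $\{(x^1)^\alpha,(x^1)^\alpha\log x^1\}$ or $\{(x^1)^\alpha,(x^1)^\beta\}$. Since we also need the constant function $1=(x^1)^0$ in the span, in the two-power case one exponent must be $0$; in the $\log$ case we need $\alpha=0$ so $E=\operatorname{Span}\{1,\log x^1\}$; and in the linear case the extra solution combined with $1$ forces $\alpha=0$, giving $\operatorname{Span}\{1,x^2+c_1x^1\}$. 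This already produces the four candidate forms in the statement: $\{1,\log x^1\}$, $\{1,x^2\}$ (the $c_1=0$ degenerate linear case), $\{1,x^2+c_1x^1\}$, and $\{1,(x^1)^{C_{11}{}^1+1}\}$ (the two-power case with $\beta=C_{11}{}^1+1\ne 0$).

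The main work is then to translate ``$f$ solves $\mathcal{H}f=0$'' into explicit conditions on the $C_{ij}{}^k$ for each candidate $f$, and conversely to verify that these conditions are sufficient. For $f=\log x^1$: substituting into $\mathcal{H}f=0$ with $\Gamma_{ij}{}^k=(x^1)^{-1}C_{ij}{}^k$ gives, component by component, $\partial_{x^i}\partial_{x^j}\log x^1 - (x^1)^{-1}C_{ij}{}^k\,\partial_{x^k}\log x^1=0$; since $\partial_{x^1}\log x^1=(x^1)^{-1}$ and $\partial_{x^2}\log x^1=0$, this reads $-\delta_{i1}\delta_{j1}(x^1)^{-2}-(x^1)^{-2}C_{ij}{}^1=0$, i.e. $C_{11}{}^1=-1$, $C_{12}{}^1=0$, $C_{22}{}^1=0$, exactly as in Assertion~(1); the condition $C_{22}{}^2\ne0$ together with $\tilde\rho\ne0$ is then extracted from a direct Ricci computation, and I would present the resulting $\tilde\rho_{ij}$ values as stated. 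The analogous substitutions handle the other three cases: $f=x^2$ forces $C_{11}{}^2=C_{12}{}^2=C_{22}{}^2=0$; $f=x^2+c_1x^1$ forces $(C_{11}{}^2,C_{12}{}^2,C_{22}{}^2)=-c_1(C_{11}{}^1,C_{12}{}^1,C_{22}{}^1)$; and $f=(x^1)^{C_{11}{}^1+1}$ forces $C_{12}{}^1=C_{22}{}^1=0$ with $C_{11}{}^1\ne-1$. In each instance a short computation of $\rho$ using $\rho_{ij}=\partial_{x^1}\Gamma_{i2}{}^2-\partial_{x^2}\Gamma_{i1}{}^2+\Gamma_{1k}{}^k\Gamma_{ij}{}^{\cdot}-\cdots$ (or simply quoting the standard Type~$\mathcal{B}$ Ricci formulas) yields the listed $\tilde\rho_{ij}$, and one checks $\tilde\rho\ne0$ rules out the flat case, ensuring these are genuinely the only non-flat occurrences.

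The step I expect to be the main obstacle is the \emph{completeness} argument — showing these four families exhaust all Type~$\mathcal{B}$ surfaces with $\dim\{\mathfrak{K}(\mathcal{M})\}=2$ and $\dim\{E(0,\nabla)\}=2$, with no overlap gaps. The subtlety is that Theorem~\ref{T2.6} is stated for \emph{non-flat} models and organizes solutions by their leading form, but one must check that requiring $1$ to lie in the two-dimensional span genuinely forces the normalizations above in every branch, and that cases (1a), (1c) cannot contribute (they force $\mu=-1$). I would dispatch this by noting that $\mu=0\ne-1$ immediately kills (1a), (1c), (1b-ii), (1d-iii), (1d-v), leaving precisely (1b-i) with $\alpha=0$, (1d-ii) with one exponent $0$, and (1d-iv) with $\alpha=0$ — and then the case $c_1=0$ versus $c_1\ne0$ in (1b-i) splits off Assertion~(2) from Assertion~(3), while the sign/value of $C_{11}{}^1+1$ distinguishes Assertion~(1)'s $\log$ solution from Assertion~(4)'s power solution. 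A final cross-check that each stated coefficient condition is also sufficient (plug back in) closes the argument.
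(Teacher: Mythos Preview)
Your overall strategy matches the paper's proof: bound $\dim\{E(0,\nabla)\}\le 2$ via Theorem~\ref{T1.2}, invoke Theorem~\ref{T2.6} to list the possible shapes of a second solution, and then read off the Christoffel constraints by direct substitution. The case enumeration (ruling out the $\mu=-1$ branches, forcing $\alpha=0$ so that $1$ sits in the span, splitting $c_1=0$ from $c_1\ne0$) is exactly how the paper organizes things.

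There is, however, a genuine gap in your justification of the side condition $C_{22}{}^2\ne0$ in Assertions~(1) and~(4). You write that this ``is then extracted from a direct Ricci computation'' together with $\tilde\rho\ne0$. That is not correct: in case~(4), for instance, with $C_{12}{}^1=C_{22}{}^1=0$ one has $\tilde\rho_{11}=-(C_{12}{}^2)^2+C_{11}{}^1C_{12}{}^2+C_{12}{}^2+C_{11}{}^2C_{22}{}^2$, which can perfectly well be nonzero when $C_{22}{}^2=0$. Non-flatness alone does \emph{not} force $C_{22}{}^2\ne0$. The actual source of this condition is the hypothesis $\dim\{\mathfrak{K}(\mathcal{M})\}=2$, which you never use. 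If $C_{12}{}^1=C_{22}{}^1=C_{22}{}^2=0$ then by Remark~\ref{R2.1} the surface is simultaneously of Type~$\mathcal{A}$, and a non-flat Type~$\mathcal{B}$ surface that is also Type~$\mathcal{A}$ has $\dim\{\mathfrak{K}(\mathcal{M})\}=4$, contradicting the hypothesis. The paper makes exactly this argument, and the same mechanism is what rules out $\alpha\ne0$ in the linear case (your shortcut ``$1$ must lie in the span so $\alpha=0$'' happens to work there, but the paper instead computes that $\alpha\ne0$ forces $(C_{12}{}^1,C_{22}{}^1,C_{22}{}^2)=(0,0,0)$ and then invokes $\dim\{\mathfrak{K}\}\ne4$). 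You should insert this Type~$\mathcal{A}$/Killing-dimension argument explicitly; otherwise the conditions $C_{22}{}^2\ne0$ in (1) and (4) are unmotivated.
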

\begin{proof} 
Let $\mathcal{M}$ be non-flat. Then  $1\leq\dim\{E(0,\nabla)\}\leq 2$ by Theorem \ref{T1.2}, since $1\in E(0,\nabla)$. We use the different possibilities in Theorem~\ref{T2.6} to investigate the existence of an additional solution of $\mathcal{H}f=0$. 
Since $\dim\{E(0,\nabla)\}\leq 2$ certain possibilities of Theorem~\ref{T2.6}~(1)  do not appear.

Let $f(x^1,x^2)=(x^1)^\alpha\{c_0+c_1\log(x^1)\}\in E(0,\nabla)$ with $c_1\neq 0$. Then a direct calculation shows
$C_{12}{}^1=0$, $C_{22}{}^1=0$, $\alpha=1+C_{11}{}^1$ and $C_{11}{}^1=-1$. Furthermore, $C_{22}{}^2\neq 0$ since otherwise $\mathcal{M}$ is both Type~$\mathcal{A}$ and Type~$\mathcal{B}$ and  $\dim\{\mathfrak{K}(\mathcal{M})\}=4$. This proves Assertion~(1).

Next we suppose $f(x^1,x^2)=(x^1)^\alpha\{x^2+c_1 x^1\}\in E(0,\nabla)$. If $\alpha\neq 0$, then $(C_{12}{}^1,C_{22}{}^1,C_{22}{}^2)=(0,0,0)$ and $\mathcal{M}$ is also of Type~$\mathcal{A}$, which is not possible. Hence 
$f(x^1,x^2)=x^2+c_1 x^1$. If $c_1=0$, then one obtains
$(C_{11}{}^2,C_{12}{}^2,C_{22}{}^2)=(0,0,0)$, which proves Assertion~(2).
If $c_1\neq 0$, then necessarily $(C_{ij}{}^2)$ and $(C_{ij}{}^1)$ are linearly dependent; this gives rise to the conditions of Assertion~(3). 

Finally, let $f(x^1,x^2)=(x^1)^\alpha\in E(0,\nabla)$ with $\alpha\neq 0$. Then 
Theorem~\ref{T2.6}~(3) shows that $C_{12}{}^1=0$, $C_{22}{}^1=0$, $C_{11}{}^1\neq -1$ and $\alpha=1+C_{11}{}^1$. 
Furthermore, $C_{22}{}^2\neq 0$ since otherwise $\mathcal{M}$ is both Type~$\mathcal{A}$ and Type~$\mathcal{B}$ and hence $\dim\{\mathfrak{K}(\mathcal{M})\}=4$. This proves Assertion~(4).
\end{proof}

\subsubsection{Strongly projectively flat Type~$\mathcal{B}$ surfaces}\label{S5.2.2}
Any strongly projectively flat surface has $\dim\{ E(-1,\nabla)\}=3$.  Previous results show that, if $\dim\{\mathfrak{K}(\mathcal{M})\}\geq 3$, then $\dim\{ E(-1,\nabla)\}\geq 1$ if and only if $\mathcal{M}$ is strongly projectively flat. While Theorem~\ref{T1.2} rules out the case  $\dim\{ E(-1,\nabla)\}=2$, we  show the existence of Type~$\mathcal{B}$ surfaces with $\dim\{ E(-1,\nabla)\}=1$ in Theorem~\ref{T5.9}.

\begin{theorem}\label{T5.7}
Let $\mathcal{M}$ be a Type~$\mathcal{B}$ model with $\dim\{\mathfrak{K}(\mathcal{M})\}=2$. Then
$\mathcal{M}$ is strongly projectively flat if and only if it is
linearly isomorphic to a surface given by
$$
C_{11}{}^1=1+2c,\, C_{11}{}^2=0,\, C_{12}{}^1=0,\, C_{12}{}^2=c,\, C_{22}{}^1=\pm 1,\, C_{22}{}^2=0,
$$ 
where $c\notin\{-1,0\}$. Then
$
\rho=(x^1)^{-2}\{c(2+c)\, dx^1\otimes dx^1\pm c\, dx^2\otimes dx^2\},
$
and moreover $E(0,\nabla)=\operatorname{Span}\{1\}$,
$E(-1,\nabla)=(x^1)^c\operatorname{Span}\{1,x^2,(x^1)^2+(x^2)^2\}$,   and $E(\mu,\nabla)=0$ otherwise.
\end{theorem}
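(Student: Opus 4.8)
The plan is to establish the three assertions --- the classification up to linear isomorphism, the formula for $\rho$, and the eigenspace description --- in turn, using Theorem~\ref{T1.2}, Theorem~\ref{T2.6}, Theorem~\ref{T1.3} and Theorem~\ref{T5.6}. Note first that $\mathcal{M}$ is non-flat (a flat surface has $\dim\{\mathfrak{K}(\mathcal{M})\}=6$) and that, by Lemma~\ref{L2.4}, affine and linear isomorphism coincide here, so it is enough to normalize by the transformations $\Psi(x^1,x^2)=(x^1,ax^1+bx^2)$ with $b\ne0$.

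For the classification, I would use the equivalence from Theorem~\ref{T1.2} that, for our non-flat $\mathcal{M}$, being strongly projectively flat is the same as $\dim\{E(-1,\nabla)\}=3$. Granting this, Theorem~\ref{T2.6}~(1) confines $E(-1,\nabla)$ to one of the five three-dimensional shapes (1a), (1b-ii), (1c), (1d-iii), (1d-v). In the last four, $E(-1,\nabla)$ contains two distinct powers $(x^1)^{\alpha_1}\ne(x^1)^{\alpha_2}$; Theorem~\ref{T2.6}~(3) then forces $C_{12}{}^1=C_{22}{}^1=0$ and, as $\mu=-1\ne0$, also $C_{22}{}^2=0$, making $\mathcal{M}$ of Type~$\mathcal{A}$ and hence $\dim\{\mathfrak{K}(\mathcal{M})\}=4$ --- contrary to hypothesis. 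So only case (1a) survives, i.e. $E(-1,\nabla)=(x^1)^\alpha\operatorname{Span}\{1,\,x^2+c_{12}x^1,\,(x^2)^2+2c_{12}x^1x^2+c_{11}(x^1)^2\}$ with $\alpha,c_{12},c_{11}$ real. Choosing $\Psi$ with $a=-c_{12}$ removes the cross term and with $b^2=|c_{11}-c_{12}^2|$ normalizes the $(x^1)^2$-coefficient; the degenerate possibility $c_{11}-c_{12}^2=0$ is incompatible with $\mathcal{H}f=-f\rho_s$ (substituting $(x^1)^\alpha(x^2)^2$ leaves an unmatched summand $2(x^1)^\alpha$ in the $(2,2)$-component), so after normalization $E(-1,\nabla)$ has the form stated in the theorem. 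Substituting each of the three generators of $E(-1,\nabla)$ into $\mathcal{H}f=-f\rho_s$ and matching coefficients of monomials then determines every $C_{ij}{}^k$ in terms of $\alpha$; writing $c=\alpha$ yields the displayed family, with $c\notin\{-1,0\}$ since $c=0$ is flat and $c=-1$ gives a Type~$\mathcal{C}$ model with $\dim\{\mathfrak{K}(\mathcal{M})\}=3$ (Remark~\ref{R2.1}).

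The reverse implication, together with the formula for $\rho$, is a direct computation: from $\Gamma_{ij}{}^k=(x^1)^{-1}C_{ij}{}^k$ one obtains $\rho_{jk}=(x^1)^{-2}\tilde\rho_{jk}$ with $\tilde\rho_{jk}$ quadratic in the $C_{ij}{}^k$, which evaluates to $\rho=(x^1)^{-2}\{c(2+c)\,dx^1\otimes dx^1\pm c\,dx^2\otimes dx^2\}$; computing $\nabla\rho$ the same way shows it is totally symmetric, so $\mathcal{M}$ is strongly projectively flat. For the eigenspaces: $E(0,\nabla)=\operatorname{Span}\{1\}$ because the present family matches none of the four exceptional configurations of Theorem~\ref{T5.6} (each either clashes with the pattern of the $C_{ij}{}^1$ or forces $c=0$). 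For $\mu\notin\{0,-1\}$, when $c\ne-2$ the Ricci tensor has rank two and Theorem~\ref{T1.3} gives $E(\mu,\nabla)=0$; the single remaining value $c=-2$, where $\rho$ has rank one, is disposed of by hand --- Theorem~\ref{T2.6} limits any $f\in E(\mu,\nabla)$ to the finitely many listed shapes, and substituting each into $\mathcal{H}f=\mu f\rho_s$ forces $\mu\in\{0,-1\}$. Finally $\dim\{E(-1,\nabla)\}=3$ by strong projective flatness and Theorem~\ref{T1.2}, while a direct check shows the three functions listed solve $\mathcal{H}f=-f\rho_s$ and are independent; this completes the description.

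The step I expect to be the main obstacle is the passage, within the classification, from the abstract form of $E(-1,\nabla)$ to the explicit Christoffel symbols: this requires substituting each basis generator into $\mathcal{H}f=-f\rho_s$, carefully carrying the bookkeeping of the parameter $c$ and of the two sign choices (the sign in $C_{22}{}^1$ being tied to the sign of the quadratic generator), and disposing of the degenerate subcases; the converse computation and the eigenspace arguments for the explicit family are by comparison routine.
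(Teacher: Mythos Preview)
Your proposal is correct, and the route you take to the classification genuinely differs from the paper's. The paper proceeds via the classical characterization: $\mathcal{M}$ is strongly projectively flat if and only if both $\rho$ and $\nabla\rho$ are totally symmetric. For a Type~$\mathcal{B}$ model the condition $\rho_a=0$ reads $C_{12}{}^1=-C_{22}{}^2$; the symmetry of $\nabla\rho$ then yields two polynomial constraints (Equation~(\ref{E5.c}) in the paper), and a short case split on $C_{22}{}^1$ together with a shear produces the displayed family directly. Only afterwards does the paper verify the basis of $E(-1,\nabla)$ by testing $f=(x^1)^\alpha$.

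You instead start from Theorem~\ref{T1.2} and Theorem~\ref{T2.6}: knowing $\dim\{E(-1,\nabla)\}=3$, you rule out every three-dimensional shape containing two distinct pure powers via Theorem~\ref{T2.6}~(3) (which would force the model to be simultaneously Type~$\mathcal{A}$), leaving only the quadratic shape~(1a); a shear normalizes it, and substituting the normalized generators back into the equation recovers the Christoffel symbols. This is a legitimate and rather elegant use of the paper's own structural machinery; it trades the tensor computation of $\nabla\rho$ for the eigenspace bookkeeping you flag as the main obstacle. The paper's approach is computationally lighter at the classification step, but yours has the conceptual advantage that the basis for $E(-1,\nabla)$ is obtained simultaneously with the connection rather than verified afterwards. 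For the remaining eigenspaces ($\mu=0$ via Theorem~\ref{T5.6}, $\mu\notin\{0,-1\}$ via Theorem~\ref{T1.3}, and the rank-one case $c=-2$ by hand) your argument coincides with the paper's; note that in the $c=-2$ case it suffices, as you implicitly use, that every nonzero $E(\mu,\nabla)$ in Theorem~\ref{T2.6} contains some pure power $(x^1)^\alpha$.
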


\begin{remark}\rm
By Lemma~\ref{L7.1}, the only possible isomorphism between two surfaces in Theorem \ref{T5.7} is a shear
$T(x^1,x^2)=(x^1,ax^1+bx^2)$. Since $C_{22}{}^1=\pm1$, $b=\pm1$. Since $C_{22}{}^2=0$, we conclude $a=0$.
Since the map $(x^1,x^2)\rightarrow(x^1,-x^2)$ acts trivially on these structures, the structures of Theorem \ref{T5.7} are all inequivalent
for different values of $c\ne0$. In the cases $c=-1$, $\mathcal{M}$ corresponds to the hyperbolic plane and the Lorentzian analogue given by $\mathcal{N}_3$ and $\mathcal{N}_4$ (cf. Definition~\ref{D4.1}).
\end{remark}

\begin{proof} 

We recall that $\mathcal{M}$ is strongly projectively flat if and only if both $\rho$ and $\nabla\rho$ are totally symmetric \cite{NS}.
For a Type~$\mathcal{B}$ model one has $\rho_a=\frac{1}{2}(x^1)^{-2}(C_{12}^1 + C_{22}^2)\, dx^1\wedge dx^2$. Hence $\rho$ is symmetric if and only if  $C_{12}{}^1=-C_{22}{}^2$.
Next we examine $\nabla\rho$ and compute $(\nabla_{\partial_{x^1}} \rho)(\partial_{x^2},\partial_{x^i})-(\nabla_{\partial_{x^2}} \rho)(\partial_{x^1},\partial_{x^i})$, for $i=1,2$, to obtain the following constraints:
\begin{equation}\label{E5.c}
\begin{array}{l}
(2+C_{11}{}^1-2C_{12}{}^2)C_{22}{}^2-3C_{11}{}^2 C_{22}{}^1=0\,,\\
\noalign{\medskip}
(2-2C_{11}{}^1+4C_{12}{}^2)C_{22}{}^1+6 (C_{22}{}^2)^2=0\,.
\end{array}
\end{equation}
We  distinguish two cases. 
\subsubsection*{Case 1. $C_{22}{}^1=0$} We then obtain $C_{22}{}^2=0$. Since $C_{12}{}^1=-C_{22}{}^2$, we see 
$C_{12}{}^1=C_{22}{}^1=C_{22}{}^2=0$ and the surface is also Type~$\mathcal{A}$ by Remark~\ref{R2.1}.

\subsubsection*{Case 2. $C_{22}{}^1\ne0$} Let $\tilde x^1=x^1$ and $\tilde x^2=ax^1+x^2$ define a shear. We then obtain
$\tilde C_{12}{}^1=C_{12}{}^1-aC_{22}{}^1$. Thus by choosing $a$ appropriately, we assume that $C_{12}{}^1=0$. 
This implies $C_{22}{}^2=0$ so Equation~(\ref{E5.c}) yields  $C_{11}{}^2=0$ and $2-2C_{11}{}^1+4C_{12}{}^1=0$.
The relations in the coefficients $C_{ij}{}^k$ now follow  after rescaling the coordinates so that $C_{22}{}^1=\pm 1$. 
The converse follows
since the arguments are completely reversible.

Set $\mu=-1$ and take $f=(x^1)^\alpha$. We have
$$
\mathfrak{Q}_{\mu,22}=\pm (x^1)^{\alpha-2}(c-\alpha)\text{ and }
\mathfrak{Q}_{\mu,11}=(x^1)^{\alpha-2}(c-\alpha)(c+2-\alpha)\,.$$ Thus $\alpha=c$
and one computes
$(x^1)^c\operatorname{Span}\{1,x^2,(x^1)^2+(x^2)^2\}\subset E(-1,\nabla)$; equality now follows for dimensional reasons.

On the other hand $E(0,\nabla)=\operatorname{Span}\{1\}$  and $\dim\{ E(\mu,\nabla)\} =0$ ($\mu\neq 0,-1$) follow by Theorem \ref{T1.3} if $c\neq -2$. Setting $c=-2$ and $f=(x^1)^\alpha$, one has 
$$
\mathfrak{Q}_{\mu,11}=(x^1)^{\alpha-2}\alpha(\alpha+2)\,,
\quad
\mathfrak{Q}_{\mu,22}=-(x^1)^{\alpha-2}(\alpha-2\mu)\,,
$$
from where it follows that $\mu=-1$ and thus  $\dim\{ E(\mu,\nabla)\}=0$ otherwise.
\end{proof}

\begin{theorem}\label{T5.9}
Let $\mathcal{M}$ be a Type~$\mathcal{B}$ surface with $\dim\{\mathfrak{K}(\mathcal{M})\}=2$  which is not strongly projectively flat. Then
$\dim\{E(-1,\nabla)\}=1$ if and only if it is linearly isomorphic to one of the following surfaces
\begin{enumerate}
\item $C_{22}{}^1=0$, $C_{22}{}^2=C_{12}{}^1\neq 0$. In this case $E(-1,\nabla)=\operatorname{Span}\{(x^1)^{C_{12}{}^2} \}$. 
\item $C_{22}{}^1=\pm 1$, $C_{12}{}^1=0$,  $C_{22}{}^2=\pm 2C_{11}{}^2\neq 0$,
$C_{11}{}^1=1+2C_{12}{}^2\pm(C_{11}{}^2)^2$. In this case $E(-1,\nabla)=\operatorname{Span}\{(x^1)^{(C_{11}{}^2)^2+C_{12}{}^2} \}$. 
\end{enumerate} 
\end{theorem}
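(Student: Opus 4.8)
The plan is to reduce the problem to the existence of a single monomial solution $(x^1)^\alpha$ and then run an explicit case analysis on the structure constants $C_{ij}{}^k$. Since $\dim\{\mathfrak{K}(\mathcal{M})\}=2$ forces $\mathcal{M}$ to be non-flat, Theorem~\ref{T1.2} gives $\dim\{E(-1,\nabla)\}\in\{0,1,3\}$, the value $3$ occurring exactly when $\mathcal{M}$ is strongly projectively flat; so under our hypotheses only $\dim\{E(-1,\nabla)\}\in\{0,1\}$ is possible. I would then show $\dim\{E(-1,\nabla)\}=1$ if and only if $(x^1)^\alpha\in E(-1,\nabla)$ for some real $\alpha$: if $0\neq f\in E(-1,\nabla)$, then by Theorem~\ref{T2.5}~(2) and Theorem~\ref{T2.6}~(1) --- in every case of which other than~(1d-i) the space $E(-1,\nabla)$ has dimension $2$ or $3$ --- the only admissible possibility is~(1d-i), so $f$ is a scalar multiple of $(x^1)^\alpha$; and $\alpha$ must be real, as otherwise $(x^1)^{\bar\alpha}$ would be an independent second solution.

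Next I would make the equation $\mathfrak{Q}_{-1}((x^1)^\alpha)=0$ explicit. Computing the Hessian of $(x^1)^\alpha$ and using the Ricci tensor of a Type~$\mathcal{B}$ model (so $\rho=(x^1)^{-2}\tilde\rho$ with $\tilde\rho$ quadratic in the $C_{ij}{}^k$ and $\rho_a=\frac12(x^1)^{-2}(C_{12}{}^1+C_{22}{}^2)\,dx^1\wedge dx^2$, as in the proof of Theorem~\ref{T5.7}), the tensor equation collapses to three scalar equations: one quadratic in $\alpha$ from the $dx^1\otimes dx^1$ component, and two linear in $\alpha$ from the $dx^1\otimes dx^2$ and $dx^2\otimes dx^2$ components, in which $C_{12}{}^1$ and $C_{22}{}^1$ are the respective coefficients of $\alpha$.

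The implication $(\Leftarrow)$ is immediate: substitute the surfaces of~(1) and of~(2) into the three equations, verify that the stated exponent solves them, conclude $\dim\{E(-1,\nabla)\}\geq1$, and combine with $\dim\{E(-1,\nabla)\}\leq1$. For $(\Rightarrow)$ I would split on $C_{22}{}^1$. If $C_{22}{}^1\neq0$, a shear $x^2\mapsto x^2+\beta x^1$ --- a linear isomorphism of Type~$\mathcal{B}$ models --- normalizes $C_{12}{}^1=0$; the $dx^2\otimes dx^2$ equation then gives $\alpha=C_{11}{}^1-1-C_{12}{}^2$, the $dx^1\otimes dx^2$ equation gives $C_{22}{}^2=2C_{11}{}^2C_{22}{}^1$, and the $dx^1\otimes dx^1$ equation gives $C_{11}{}^1=1+2C_{12}{}^2+(C_{11}{}^2)^2C_{22}{}^1$; moreover $C_{11}{}^2\neq0$, for otherwise $C_{22}{}^2=0$ and $C_{11}{}^1=1+2C_{12}{}^2$, which is one of the connections of Theorem~\ref{T5.7} (or flat, or $\mathcal{N}_3$, or $\mathcal{N}_4$), all excluded by the hypotheses. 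Rescaling $x^2$ so that $C_{22}{}^1=\pm1$ puts $\mathcal{M}$ in the form~(2). If $C_{22}{}^1=0$ but $C_{12}{}^1\neq0$, the $dx^2\otimes dx^2$ equation forces $C_{22}{}^2=C_{12}{}^1$ and the $dx^1\otimes dx^2$ equation gives $\alpha=C_{12}{}^2$; then $C_{12}{}^1+C_{22}{}^2\neq0$, so $\rho$ is non-symmetric and $\mathcal{M}$ is not strongly projectively flat --- this is the form~(1). Finally, if $C_{22}{}^1=C_{12}{}^1=0$, the $dx^1\otimes dx^2$ equation forces $C_{22}{}^2=0$, so $\mathcal{M}$ is also of Type~$\mathcal{A}$ by Remark~\ref{R2.1}, necessarily with $\operatorname{Rank}\{\rho\}=2$ since $\dim\{\mathfrak{K}(\mathcal{M})\}=2$, whence $\dim\{E(-1,\nabla)\}=3$, a contradiction.

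The step I expect to be the real obstacle is the last case analysis: beyond the algebra of the three equations, one must keep track of the fact that the only linear isomorphisms available among Type~$\mathcal{B}$ models are shears and $x^2$-scalings, ensure these normalizations do not conflate distinct cases, and --- crucially --- recognize every degenerate sub-case, above all $C_{11}{}^2=0$ in the branch $C_{22}{}^1\neq0$, either as strongly projectively flat (via the criterion underlying Theorem~\ref{T5.7}) or as having $\dim\{\mathfrak{K}(\mathcal{M})\}>2$, so that it is ruled out by hypothesis rather than giving rise to a spurious family.
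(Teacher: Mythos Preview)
Your proof is correct and considerably more detailed than the paper's own argument, which consists of two sentences: the classification of linear equivalence classes is cited from~\cite{BGGV17b}, and the explicit generators are left as ``a standard calculation''. You have effectively reconstructed the argument that the reference presumably contains. The reduction via Theorem~\ref{T1.2} and Theorem~\ref{T2.6}~(1) to a single real monomial $(x^1)^\alpha$ is exactly right, and the trichotomy on $(C_{22}{}^1,C_{12}{}^1)$ is the natural way to solve the resulting system.

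Two minor remarks. First, in the branch $C_{22}{}^1=0$, $C_{12}{}^1\ne0$, once you obtain $C_{22}{}^2=C_{12}{}^1$ and $\alpha=C_{12}{}^2$, the $dx^1\otimes dx^1$ equation is automatically satisfied (one checks $\tilde\rho_{s,11}=C_{12}{}^2(1+C_{11}{}^1-C_{12}{}^2)$, whence $\alpha(\alpha-1-C_{11}{}^1)+\tilde\rho_{s,11}=0$ identically for $\alpha=C_{12}{}^2$); you implicitly rely on this but do not state it. Second, in the branch $C_{22}{}^1=C_{12}{}^1=0$, your chain ``Type~$\mathcal{A}$ with $\dim\{\mathfrak{K}\}=2$ $\Rightarrow$ $\operatorname{Rank}\{\rho\}=2$ $\Rightarrow$ strongly projectively flat $\Rightarrow$ $\dim\{E(-1,\nabla)\}=3$'' works, but the more direct route is to note that a non-flat Type~$\mathcal{B}$ model which is also Type~$\mathcal{A}$ already has $\dim\{\mathfrak{K}(\mathcal{M})\}=4$ (see Section~\ref{S3}.2), contradicting the hypothesis immediately.
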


\begin{proof}
The different equivalence classes of surfaces have been obtained in \cite{BGGV17b}. Now the expression of the solutions follows by a standard calculation.
\end{proof}

\subsubsection{The general case}\label{S5.2.3} 
We work modulo linear equivalence. Assume that $\rho_s\neq 0$ and that $\mu\notin\{-1,0\}$. Let $\tilde\rho=(x^1)^2\rho$.

\begin{theorem}\label{T5.10} 
Let $\mathcal{M}$ be a Type~$\mathcal{B}$ surface with $\rho_s\ne0$. Let $\mu\in \mathbb{R}-\{0,-1\}$. Then $\dim\{E(\mu,\nabla)\}\geq 1$ if and only if (up to linear equivalence) $\mathcal{M}$ is of the form
$$
\begin{array}{l}
C_{22}{}^1=\pm1,\,\, C_{12}{}^1=0,\,\, C_{22}{}^2=\pm 2C_{11}{}^2, \,\,\,\,\Delta:=-C_{11}{}^1+C_{12}{}^2+1\ne0,
\\
\noalign{\medskip}
\mu=\Delta^{-2}\{1+2 (C_{11}{}^2)^2-(C_{11}{}^1-C_{12}{}^2)^2+2 C_{12}{}^2\}\,.
\end{array}
$$
In this case $\operatorname{Span}\{(x^1)^{\mu \Delta}\}\subset E(\mu,\nabla)$ and 
$$
\tilde\rho=\left(
\begin{array}{cc}
2 (C_{11}{}^2)^2+(C_{11}{}^1-C_{12}{}^2+1) C_{12}{}^2 & C_{11}{}^2 \\
-C_{11}{}^2 & C_{11}{}^1-C_{12}{}^2-1 \\
\end{array}
\right)\,.
$$
Moreover, $\dim\{ E(\mu,\nabla)\}=2$ if and only if one of the following holds
\begin{enumerate}
\item 
$C_{11}{}^1=\mp8 c^2-\frac{5}{2}$, $C_{11}{}^2=c $, $C_{12}{}^1=0$,
$C_{12}{}^2=\frac{1}{2} \left(\mp 8 c^2-3\right)$, 
\smallbreak\noindent$C_{22}{}^1=\pm 1$, $C_{22}{}^2=2c$, $c\ne0$.
\\
In this case $\mu=-\frac{8 c^2+3}{8c^2+4}$, and $E(\mu,\nabla)=(x^1)^\alpha\operatorname{Span}\{ 1,x^2-2c x^1\}$, where  $\alpha=-4c^2-\frac{3}{2}$. Moreover  
$\tilde\rho=\left(\begin{array}{cc} 8 \left(2 c^4\pm c^2\right) & c  \\ -c  & -4 c^2\mp 2 \\\end{array}\right)$.

\smallbreak
\item $C_{11}{}^1=c,$ $C_{11}{}^2=0$, $C_{12}{}^1=0$, $C_{12}{}^2=c+1$, $C_{22}{}^1=\pm 1$, $C_{22}{}^2=0$ with $c\notin\{-3,-1\}$.
In this case $\mu=\frac{c+1}{2}$, and $E(\mu,\nabla)=(x^1)^{1+c}\operatorname{Span}\{1,x^2\}$. Moreover $\tilde\rho=\mp2dx^2\otimes dx^2$.
\end{enumerate}
\end{theorem}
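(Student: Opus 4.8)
The plan is to combine the structural classification of solutions in Theorem~\ref{T2.6} with a direct substitution of the resulting candidate functions into the affine quasi-Einstein operator $\mathfrak{Q}_\mu$. Since $\rho_s\neq0$ the surface $\mathcal{M}$ is non-flat, and as $\mu\neq-1$ Theorem~\ref{T1.2} gives $\dim\{E(\mu,\nabla)\}\le2$. Running through the list in Theorem~\ref{T2.6}~(1), every possibility except (1b-i), (1d-i), (1d-ii) and (1d-iv) forces $\mu=-1$; and each of these four forms contains the monomial $(x^1)^\alpha$. Hence the whole analysis can start from the single hypothesis that $(x^1)^\alpha\in E(\mu,\nabla)$ for some $\alpha$.

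First I would impose $\mathfrak{Q}_\mu((x^1)^\alpha)=0$. Using $\Gamma_{ij}{}^k=(x^1)^{-1}C_{ij}{}^k$ and writing $\rho_s=(x^1)^{-2}\tilde\rho_s$, the three independent components reduce, after cancelling the common factor $(x^1)^{\alpha-2}$, to $\alpha(\alpha-1-C_{11}{}^1)=\mu(\tilde\rho_s)_{11}$, $-\alpha C_{12}{}^1=\mu(\tilde\rho_s)_{12}$ and $-\alpha C_{22}{}^1=\mu(\tilde\rho_s)_{22}$. Substituting the explicit Ricci tensor of a Type~$\mathcal{B}$ surface (as computed in \cite{BGG16}) turns this into a polynomial system in the $C_{ij}{}^k$, $\alpha$ and $\mu$. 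From it I would extract: $C_{12}{}^1=0$ from the $(12)$-component; that $C_{22}{}^1=0$ cannot occur here, since together with $\mu\notin\{0,-1\}$ and $\rho_s\neq0$ it would force $\mathcal{M}$ to be also of Type~$\mathcal{A}$ (hence $\dim\{\mathfrak{K}(\mathcal{M})\}=4$, contrary to assumption) or to coincide with one of the geometries already treated; hence $C_{22}{}^1\neq0$ and a rescaling of $x^1$ normalizes $C_{22}{}^1=\pm1$. With these normalizations the vanishing of the $(12)$-component forces $C_{22}{}^2=\pm2C_{11}{}^2$, the $(22)$-component determines $\alpha=\mu\Delta$ with $\Delta=-C_{11}{}^1+C_{12}{}^2+1$, and the $(11)$-component produces the displayed value of $\mu$; here $\Delta\neq0$ is forced because $\Delta=0$ would give $\alpha=0$, so $1\in E(\mu,\nabla)$ and thus $\mu=0$, a contradiction. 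A short computation of $\tilde\rho$ under these constraints gives the stated matrix, and reversing the computation shows $(x^1)^{\mu\Delta}\in E(\mu,\nabla)$; this proves the equivalence in the first part of the statement.

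For the dimension-two assertion I would return to Theorem~\ref{T2.6}. Case (1d-iv) cannot occur, since by Theorem~\ref{T2.6}~(3) two distinct monomial solutions force $C_{22}{}^1=0$, contradicting $C_{22}{}^1=\pm1$. Case (1d-ii), where $(x^1)^\alpha\log(x^1)\in E(\mu,\nabla)$, is also ruled out: substituting this function into $\mathfrak{Q}_\mu$ and comparing the term free of $\log(x^1)$ in the $(22)$-component again forces $C_{22}{}^1=0$. Thus $\dim\{E(\mu,\nabla)\}=2$ only through case (1b-i), where $f=(x^1)^\alpha(x^2+c_1x^1)\in E(\mu,\nabla)$; plugging $f$ into $\mathfrak{Q}_\mu=0$ and combining with the constraints already obtained pins down $c_1$ and the remaining $C_{ij}{}^k$, splitting into exactly the two families~(1) (with $C_{22}{}^2\neq0$, i.e.\ $c_1=-2c$) and~(2) (with $C_{22}{}^2=0$, i.e.\ $c_1=0$); a direct verification, or the bound $\dim\le2$ from Theorem~\ref{T1.2}, then confirms that in both families the two displayed functions form a basis of $E(\mu,\nabla)$. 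The main obstacle I anticipate is not conceptual but organizational: keeping the case split inherited from Theorem~\ref{T2.6} exhaustive, and handling the polynomial bookkeeping carefully enough to exclude $C_{22}{}^1=0$ and to keep the ``$\pm$'' signs tying $C_{22}{}^1$, $C_{22}{}^2$ and $\tilde\rho$ mutually consistent without dropping a branch.
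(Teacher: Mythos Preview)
Your proposal follows the same strategy as the paper: reduce via Theorem~\ref{T2.6} to the monomial $(x^1)^\alpha$, substitute into $\mathfrak{Q}_\mu$, and solve the resulting algebraic system; the dimension-two part is likewise handled by substituting the candidate $f=(x^1)^\alpha(x^2+c_1x^1)$ (and its $\log$ variant) into $\mathfrak{Q}_\mu$. There are, however, two slips in your normalization step.

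First, $C_{12}{}^1=0$ is \emph{not} a consequence of the $(12)$-component. Before any normalization, $(\tilde\rho_s)_{12}$ depends on several of the $C_{ij}{}^k$, and the equation $-\alpha C_{12}{}^1=\mu(\tilde\rho_s)_{12}$ does not isolate $C_{12}{}^1$. The paper proceeds in the opposite order: once $C_{22}{}^1\neq0$ is established, it invokes the shear $(x^1,x^2)\mapsto(x^1,ax^1+x^2)$ (under which $\tilde C_{12}{}^1=C_{12}{}^1-aC_{22}{}^1$, cf.\ Lemma~2.8 of \cite{BGG16}) to \emph{set} $C_{12}{}^1=0$ as a linear-equivalence normalization, and only then does the $(12)$-component yield the genuine constraint $C_{22}{}^2=\pm2C_{11}{}^2$. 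Second, the normalization $C_{22}{}^1=\pm1$ comes from rescaling $x^2$, not $x^1$: Type~$\mathcal{B}$ linear isomorphisms are of the form $(x^1,x^2)\mapsto(x^1,ax^1+bx^2)$, so $x^1$ is rigid.

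Neither slip is fatal---both are repaired by inverting the order (normalize first via linear equivalence, then read off the algebraic constraints from $\mathfrak{Q}_\mu=0$)---but as written your derivation of $C_{12}{}^1=0$ is unjustified.
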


\begin{remark}\rm
None of the surfaces in Theorem~\ref{T5.10} may have  $\dim\{\mathfrak{K}(\mathcal{M})\}=4$ since $C_{22}{}^1=\pm1$ in all cases.
Moreover the affine surfaces in Theorem~\ref{T5.10}~(1) have $\dim\{\mathfrak{K}(\mathcal{M})\}=2$ since $\rho_a\neq 0$ and $\operatorname{Rank}\rho_s=2$, for all $c\neq 0$. 

The affine surfaces in Theorem~\ref{T5.10}~(2) have $\dim\{\mathfrak{K}(\mathcal{M})\}=2$ for all $c\notin\{-3,-1\}$ but the case $c=-\frac{3}{2}$. In this case $\mu=-\frac{1}{4}$ and $\mathcal{M}$ is linearly isomorphic to $\mathcal{N}_1^\pm$ in Definition \ref{D4.1} just considering the linear transformation $(u^1,u^2)=(x^1,\mp\sqrt{2}x^2)$ and Theorem~\ref{T5.10}~(2) reduces to Theorem~\ref{T4.4}~(4).
\end{remark}

\begin{proof} 
We assume first that $C_{22}{}^1=0$ and set $f=(x^1)^\alpha$ for $\alpha\neq 0$. Then 
$$
\mathfrak{Q}_{\mu,22}(f)=\mu(x^1)^{\alpha-2}C_{12}{}^1(C_{12}{}^1-C_{22}{}^2)\,.
$$
Setting $C_{12}{}^1=0$ gives $\mathfrak{Q}_{\mu,12}(f)=-\frac{1}{2}\mu C_{22}{}^2(x^1)^{\alpha-2}$. Consequently $\mathcal{M}$ is also
of Type~$\mathcal{A}$ since
$(C_{12}{}^1,C_{22}{}^1,C_{22}{}^2)=(0,0,0)$.

Assume  $C_{22}{}^1\neq 0$. A linear change of coordinates gives  $C_{22}{}^1=\pm 1$, $C_{12}{}^1=0$ (see Lemma 2.8 in \cite{BGG16}). Set $f=(x^1)^\alpha$ for $\alpha\neq 0$. Then 
$$
\begin{array}{l}
\mathfrak{Q}_{\mu,12}(f)=-\frac{1}{2}\mu (C_{22}{}^2\mp 2C_{11}{}^2)(x^1)^{\alpha-2},
\\
\noalign{\medskip}
\mathfrak{Q}_{\mu,22}(f)=\mp(x^1)^{\alpha-2}(\alpha+\mu(C_{11}{}^1-C_{12}{}^2-1)).
\end{array}
$$ 
Hence $C_{22}{}^2=\pm 2 C_{11}{}^2$ and $\alpha=\mu(1-C_{11}{}^1+C_{12}{}^2)\neq 0$. 
Now, 
$$
\begin{array}{l}
\mathfrak{Q}_{\mu,11}(f)=(x^1)^{\alpha-2}\mu\left\{ (C_{11}{}^1)^2\mp 2 (C_{11}{}^2)^2+(C_{12}{}^2)^2-2C_{12}{}^2-2C_{11}{}^1C_{12}{}^2-1\right.
\\
\noalign{\medskip}
\phantom{\mathfrak{Q}_{\mu,11}(f)=(x^1)^{\alpha-2}\mu\{ }
\left.
+(1-C_{11}{}^1+C_{12}{}^2)^2\mu
\right\}
\end{array}
$$ 
determines the admissible values of $\mu$. This proves the general case when one assumes that  $E(\mu,\nabla)\neq~\{ 0\}$.

Assume that $\dim\{ E(\mu,\nabla)\}=2$. Set $f(x^1,x^2)=(c_1x^1+c_2x^2+c_3\log(x^1))(x^1)^\alpha$. We apply Theorem~\ref{T2.6}.
The condition $\mathfrak{Q}_{\mu,22}(f)=0$ yields $c_3+x^1(c_1+2 c_2 C_{11}{}^2)=0$. Hence $c_3=0$ and $c_1=-2c_2 C_{11}{}^2$.
We now consider separately the cases $C_{11}{}^2=0$ and $C_{11}{}^2\neq 0$.

Set  $C_{11}{}^2=c\neq 0$. Then  $f(x^1,x^2)=(x^2-2c x^1)(x^1)^\alpha$, $C_{22}{}^1=\pm1$, $C_{12}{}^1=0$, and $C_{22}{}^2=\pm 2c$.
Since $\mathfrak{Q}_{\mu,11}(f)+2c\, \mathfrak{Q}_{\mu,12}(f)=0$,
$C_{11}{}^1=\frac{1}{3}(3+7 C_{12}{}^2\pm  4c^2)$. Finally the constraint 
$\mathfrak{Q}_{\mu,12}(f)=0$ gives $C_{12}{}^2=-\frac{1}{2}(3\pm 8 c^2)$. This proves Assertion~(1).

Next consider the case $C_{22}{}^1=\pm1$, $C_{12}{}^1=0$, $C_{11}{}^2=0$, $C_{22}{}^2=0$. In this case $f(x^1,x^2)=x^2(x^1)^\alpha$ and
$$
\mathfrak{Q}_{\mu,12}(f)=\frac{(C_{11}{}^1)^2+2 (C_{12}{}^2)^2-C_{12}{}^2-3 C_{11}{}^1C_{12}{}^2-1}{C_{11}{}^1-C_{12}{}^2-1}.
$$
Hence $C_{11}{}^1=C_{12}{}^2-1$ or $C_{11}{}^1=1+2C_{12}{}^2$. The latter case gives $\mu=-1$ and thus, setting $C_{11}{}^1=c$ and $C_{12}{}^2=c+1$ one has $\mu=\frac{c+1}{2}$ and Assertion (2) follows. 
\end{proof}

\section{Final Remarks}
\subsection{Flat surfaces}
An affine surface is flat if and only if $\dim\{\mathfrak{K}(\mathcal{M})\}=6$. Any flat affine surface has local coordinates $(x^1,x^2)$ with vanishing Christoffel symbols. Hence at each point $P\in M$ one has $E(P,\mu,\nabla)=\operatorname{Span}\{ 1, x^1,x^2\}$.
	Although all flat affine surfaces are locally affine equivalent, they are not necessary linearly equivalent. 

\subsubsection{Type~$\mathcal{A}$ surface models with ${\dim\{\mathfrak{K}(\mathcal{M})\}=6}$}
\begin{definition}\label{D2.1}\rm
	Define a Type~$\mathcal{A}$ model $\mathcal{M}_i=(\mathbb{R}^2,{}^{{}^\mathcal{A}\Gamma_i}\nabla)$  by
	\medbreak\qquad
	${}^\mathcal{A}\Gamma_0\equiv\Gamma_0:=\{\Gamma_{11}{}^1=0,\Gamma_{11}{}^2=0,\Gamma_{12}{}^1=0,\Gamma_{12}{}^2=0,
	\Gamma_{22}{}^1=0,\Gamma_{22}{}^2=0\}$,
	\medbreak\qquad
	${}^\mathcal{A}\Gamma_1:=\{\Gamma_{11}{}^1 = 1, \Gamma_{11}{}^2 = 0, \Gamma_{12}{}^1 = 0, \Gamma_{12}{}^2 = 1, 
	\Gamma_{22}{}^1 = 0, \Gamma_{22}{}^2 = 0\}$,
	\medbreak\qquad$
	{}^\mathcal{A}\Gamma_2:=\{\Gamma_{11}{}^1 = -1, \Gamma_{11}{}^2 = 0, \Gamma_{12}{}^1 = 0, \Gamma_{12}{}^2 = 0, 
	\Gamma_{22}{}^1 = 0, \Gamma_{22}{}^2 = 1\}$,
	\medbreak\qquad$
	{}^\mathcal{A}\Gamma_3:=\{\Gamma_{11}{}^1 = 0, \Gamma_{11}{}^2 = 0, \Gamma_{12}{}^1 = 0, \Gamma_{12}{}^2 = 0,
	\Gamma_{22}{}^1 = 0, \Gamma_{22}{}^2 = 1\}$,
	\medbreak\qquad$
	{}^\mathcal{A}\Gamma_4:=\{\Gamma_{11}{}^1 = 0, \Gamma_{11}{}^2 = 0, \Gamma_{12}{}^1 = 0, \Gamma_{12}{}^2 = 0,
	\Gamma_{22}{}^1 = 1, \Gamma_{22}{}^2 = 0\}$,
	\medbreak\qquad$
	{}^\mathcal{A}\Gamma_5:=\{\Gamma_{11}{}^1 = 1, \Gamma_{11}{}^2 = 0, \Gamma_{12}{}^1 = 0, \Gamma_{12}{}^2 = 1, 
	\Gamma_{22}{}^1 = -1, \Gamma_{22}{}^2 = 0\}$.
\end{definition}

\begin{theorem}\label{THGA} Let $\mathcal{M}=(\mathbb{R}^2,\nabla)$ be a Type~$\mathcal{A}$ model geometry with $\rho=0$. Then
	\begin{enumerate}
		\item $\nabla$
		is linearly equivalent to one of the structures ${}^{{}^\mathcal{A}\Gamma_i}\nabla$ given above. Furthermore, ${}^{{}^\mathcal{A}\Gamma_i}\nabla$ is not linearly equivalent
		to ${}^{{}^\mathcal{A}\Gamma_j}\nabla$ for $i\ne j$.
		\item $E(\mu,\nabla)$ is independent of $\mu$ and $\dim\{E(\mu,\nabla)\}=3$. Moreover:
		\begin{enumerate}
			\item $E(\mu,{}^{{}^\mathcal{A}\Gamma_0}\nabla)=\operatorname{Span}\{1,x^1,x^2\}$.
			\item $E(\mu,{}^{{}^\mathcal{A}\Gamma_1}\nabla)=\operatorname{Span}\{1,e^{x^1},x^2e^{x^1}\}$.
			\item $E(\mu,{}^{{}^\mathcal{A}\Gamma_2}\nabla)=\operatorname{Span}\{1,e^{-x^1},e^{x^2}\}$.
			\item $E(\mu,{}^{{}^\mathcal{A}\Gamma_3}\nabla)=\operatorname{Span}\{1,x^1,e^{x^2}\}$.
			\item $E(\mu,{}^{{}^\mathcal{A}\Gamma_4}\nabla)=\operatorname{Span}\{1,x^2, (x^2)^2+2x^1\}$.
			\item $E(\mu,{}^{{}^\mathcal{A}\Gamma_5}\nabla)=\operatorname{Span}\{1,e^{x^1}\cos(x^2),e^{x^1}\sin(x^2)\}$.
		\end{enumerate}
	\end{enumerate}
\end{theorem}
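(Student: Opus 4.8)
The plan is to handle the two assertions separately: Assertion~(1) by combining the $\operatorname{GL}(2,\mathbb{R})$-classification of flat connections with constant Christoffel symbols with the description of the solution spaces, and Assertion~(2) by directly integrating the Hessian equation in each of the six normal forms.

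I would begin with the reduction that trivializes the first part of Assertion~(2). For an affine surface, $\rho=0$ is equivalent to flatness (in dimension two the full curvature tensor is recovered from the Ricci tensor), so $\mathcal{M}$ is flat, $\dim\{\mathfrak{K}(\mathcal{M})\}=6$, and the equation $\mathcal{H}f=\mu f\rho_s$ of~(\ref{E1.a}) collapses to $\mathcal{H}f=0$ for \emph{every} value of $\mu$. Hence $E(\mu,\nabla)=\ker\{\mathcal{H}\}$ is independent of $\mu$; and since a flat surface admits local coordinates with vanishing Christoffel symbols, $E(\mu,\nabla)$ has dimension $3$ (it is $\operatorname{Span}\{1,x^1,x^2\}$ in such coordinates). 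It remains to describe $E(\mu,\nabla)$ explicitly in each model and to prove the classification.

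For Assertion~(1) I would encode a torsion-free connection with constant Christoffel symbols by the linear map $\omega\colon\mathbb{R}^2\to\operatorname{End}(\mathbb{R}^2)$ with $\omega(\partial_{x^i})\partial_{x^j}=\Gamma_{ij}{}^k\partial_{x^k}$: torsion-freeness is $\omega(u)v=\omega(v)u$, and since the coordinate frame is global the curvature operator is $[\omega(u),\omega(v)]$, so flatness means precisely that the image of $\omega$ consists of commuting matrices. The group $\operatorname{GL}(2,\mathbb{R})$ acts by simultaneously changing the source basis and conjugating the target, and working through the finitely many resulting configurations --- equivalently, solving the quadratic system $\rho_{ij}=0$ in the six unknowns $\Gamma_{ab}{}^c$ and normalizing --- yields exactly the six models ${}^{{}^\mathcal{A}\Gamma_i}\nabla$ of Definition~\ref{D2.1}. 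Their pairwise inequivalence I would read off from Assertion~(2): a linear isomorphism acts on $E(\mu,\nabla)$ by substitution of coordinates, hence preserves the subspace of polynomial solutions, the degrees occurring in it, and the real-versus-complex type of the exponential solutions; under these coarse invariants the six spaces of Assertion~(2) are mutually distinct (for ${}^\mathcal{A}\Gamma_0$: two independent affine functions; for ${}^\mathcal{A}\Gamma_1$: only constant polynomials together with a single repeated real exponential; for ${}^\mathcal{A}\Gamma_2$: only constants and two distinct real exponentials; for ${}^\mathcal{A}\Gamma_3$: a non-constant affine function and one exponential; for ${}^\mathcal{A}\Gamma_4$: polynomials up to degree two and no exponential; for ${}^\mathcal{A}\Gamma_5$: a conjugate pair of complex exponentials), so no two of them are linearly equivalent.

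Finally I would verify the six spanning sets in Assertion~(2). In each model the system $\partial_{x^a}\partial_{x^b}f=\Gamma_{ab}{}^c\partial_{x^c}f$ is linear with constant coefficients and one of its three equations has the form $\partial_{x^a}^2 f=(\text{first-order expression in }f)$; a single integration in the variable $x^a$ reduces matters to elementary linear ordinary differential equations in one variable, the remaining two equations fixing the integration functions. For instance, for ${}^\mathcal{A}\Gamma_1$ the equations $\partial_{x^1}^2f=\partial_{x^1}f$, $\partial_{x^1}\partial_{x^2}f=\partial_{x^2}f$, $\partial_{x^2}^2f=0$ force $f=c_1x^2e^{x^1}+c_2e^{x^1}+c_3$, which is case (b). Since $\dim\{E(\mu,\nabla)\}=3$ is already known, it suffices in each model to exhibit three linearly independent solutions. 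I expect the only real work to lie in Assertion~(1) --- organizing the $\operatorname{GL}(2,\mathbb{R})$-reduction so that precisely these six normal forms emerge, none omitted and none redundant; the integrations in Assertion~(2) are routine, the only care needed being to record a polynomial-times-exponential in the repeated-root cases ${}^\mathcal{A}\Gamma_1$, ${}^\mathcal{A}\Gamma_4$ and the real and imaginary parts in the complex case ${}^\mathcal{A}\Gamma_5$.
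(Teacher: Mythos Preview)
Your proposal is correct and, for Assertion~(2), follows exactly the paper's approach: the paper simply says ``Assertion~(2) follows by a direct computation,'' and your reduction $\rho=0\Rightarrow\mathcal{H}f=0$ together with direct integration is precisely that computation spelled out.

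For Assertion~(1) the paper gives no argument at all; it writes ``Assertion~(1) is proven in~\cite{RBGGPV18}'' and stops. Your route is therefore different by necessity, and in one respect more self-contained: you propose to obtain the pairwise linear inequivalence of the six models by reading off invariants of the solution spaces $E(\mu,\nabla)$ themselves (dimension of the polynomial subspace, maximal polynomial degree, number and real/complex type of exponential directions). This is a legitimate and elegant shortcut---a linear change of coordinates does preserve all of these features---and it makes Assertion~(1) partially a corollary of Assertion~(2) rather than an independent external input. The trade-off is that the existence half of Assertion~(1) (that every flat Type~$\mathcal{A}$ model is linearly equivalent to one of the six) still requires the $\operatorname{GL}(2,\mathbb{R})$ case analysis you sketch, which is exactly the content deferred to~\cite{RBGGPV18}; your outline of that analysis via commuting pairs $(A_1,A_2)$ with the torsion constraint is the right framework, but the casework itself is where all the labor sits, as you correctly anticipate.
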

\begin{proof}
	Assertion (1) is proven in \cite{RBGGPV18}. Assertion (2) follows by a direct computation.
\end{proof}

The structures ${}^{{}^\mathcal{A}\Gamma_i}\nabla$ are flat. Consequently, there is a local diffeomorphism $\Phi_i$
such that $\Phi_i^*{{}^{\Gamma_0}\nabla}={}^{{}^\mathcal{A}\Gamma_i}\nabla$. 
Because $\Phi^*E(\mu,{}^{\Gamma_0}\nabla)=E(\mu,\Phi^*{}^{\Gamma_0}\nabla)$, 
we have that $\Phi_i=\Phi^*(x^i)\in E(\mu,{}^{{}^\mathcal{A}\Gamma_i}\nabla)$. Thus we can use Theorem~\ref{THGA} to determine $\Phi$ modulo a suitable affine transformation.
We have ${}^{\Gamma_0}\nabla$ is geodesically complete. Thus ${}^{\Gamma_i}\nabla$ is geodesically complete implies $\Phi$ is surjective. In \cite{DGP17}, 
D'Ascanio, Gilkey, and Pisani determined the Type~$\mathcal{A}$ surface models which are
not flat and which are geodesically complete up to linear equivalence. The following Corollary completes that analysis by examining the
flat Type~$\mathcal{A}$ surface models.

\begin{corollary} Adopt the notation established above. Then
	$$\begin{array}{lll}
	\Phi_1=(e^{x^1},x^2e^{x^1}),&\Phi_2=(e^{-x^1},e^{x^2}),&\Phi_3=(x_1,e^{x^2})\\[0.05in]
	\Phi_4=(2x^1+(x^2)^2,x^2),&\Phi_5=(e^{x^1}\cos(x^2),e^{x^1}\sin(x^2)).
	\end{array}$$
	The geometries determined by $\Gamma_1$, $\Gamma_2$, $\Gamma_3$, and $\Gamma_5$ are geodesically incomplete.
	The geometries determined by $\Gamma_0$ and $\Gamma_4$ are geodesically complete.
\end{corollary}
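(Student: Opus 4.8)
The plan is to split the statement into two independent tasks: justifying that the displayed maps $\Phi_i$ are the correct intertwiners, and then reading off geodesic completeness from the images of these maps. For the first task I would use the elementary criterion that if $\mathcal{N}=(\mathbb{R}^2,\nabla)$ has Christoffel symbols $\Gamma_{ij}{}^k$ and ${}^{\Gamma_0}\nabla$ denotes the flat connection, then a smooth map $\Phi=(\Phi^1,\Phi^2):\mathbb{R}^2\to\mathbb{R}^2$ satisfies $\Phi^*({}^{\Gamma_0}\nabla)=\nabla$ if and only if $\Phi$ is a local diffeomorphism (i.e.\ $d\Phi^1\wedge d\Phi^2$ is nowhere zero) and $\partial_{x^i}\partial_{x^j}\Phi^k-\Gamma_{ij}{}^l\partial_{x^l}\Phi^k=0$ for $k=1,2$; the latter condition says exactly that $\Phi^1,\Phi^2\in\ker\{\mathcal{H}\}=E(0,\nabla)$. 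By Theorem~\ref{THGA}, $E(\mu,{}^{{}^\mathcal{A}\Gamma_i}\nabla)$ is independent of $\mu$ and has the explicit basis listed there, so the functions appearing as components of each $\Phi_i$ lie in $E(0,{}^{{}^\mathcal{A}\Gamma_i}\nabla)$ (for $\Phi_4$ one uses that $2x^1+(x^2)^2$ and $x^2$, together with $1$, span $E(0,{}^{{}^\mathcal{A}\Gamma_4}\nabla)$ by Theorem~\ref{THGA}, and all other components appear verbatim in that list).

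Thus the first concrete step is to record, for $i=1,\dots,5$, that the stated $\Phi_i$ has nowhere-vanishing Jacobian determinant --- namely $\det d\Phi_1=e^{2x^1}$, $\det d\Phi_2=-e^{x^2-x^1}$, $\det d\Phi_3=e^{x^2}$, $\det d\Phi_4=2$, $\det d\Phi_5=e^{2x^1}$ --- and then invoke the criterion to conclude $\Phi_i^*({}^{\Gamma_0}\nabla)={}^{{}^\mathcal{A}\Gamma_i}\nabla$; the case $\Gamma_0$ is trivial with $\Phi_0=\mathrm{id}$. The second step transports geodesics: since each $\Phi_i$ is an affine local diffeomorphism, hence a covering onto its open image, the geodesics of ${}^{{}^\mathcal{A}\Gamma_i}\nabla$ are precisely the lifts of the affinely parametrized straight segments contained in $\Phi_i(\mathbb{R}^2)$, and by unique path lifting the maximal parameter interval of such a lift coincides with that of the corresponding segment in $\Phi_i(\mathbb{R}^2)$. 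Hence ${}^{{}^\mathcal{A}\Gamma_i}\nabla$ is geodesically complete if and only if $\Phi_i(\mathbb{R}^2)$ meets every affine line either in all of $\mathbb{R}$ or not at all, which, since $\Phi_i(\mathbb{R}^2)$ is open and connected, happens precisely when $\Phi_i(\mathbb{R}^2)=\mathbb{R}^2$. One then computes the images: $\Phi_1(\mathbb{R}^2)=\{u^1>0\}$, $\Phi_2(\mathbb{R}^2)=\{u^1>0,\ u^2>0\}$, $\Phi_3(\mathbb{R}^2)=\{u^2>0\}$, and $\Phi_5(\mathbb{R}^2)=\mathbb{R}^2\setminus\{0\}$ are proper, whereas $\Phi_4$ is a global diffeomorphism of $\mathbb{R}^2$ with inverse $u\mapsto\bigl((u^1-(u^2)^2)/2,\ u^2\bigr)$; together with the trivial case $\Gamma_0$ this yields the asserted dichotomy.

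The step I expect to be the main obstacle is not the identification of the $\Phi_i$ but making the completeness criterion airtight. One must check that a proper open connected $U\subsetneq\mathbb{R}^2$ carrying the flat connection is geodesically incomplete: given $q\in U$ and a boundary point $p\in\partial U$, the affine line through $p$ and $q$ meets $U$ in a relatively open subset of that line whose component containing $q$ is an interval bounded on at least one side (it omits $p$), so the straight geodesic running along that component, in its affine parameter, is inextendible with a finite endpoint. One must also confirm that passage from $U$ to ${}^{{}^\mathcal{A}\Gamma_i}\nabla$ preserves incompleteness even when $\Phi_i$ is only a covering and not a diffeomorphism, as happens for $\Phi_5$: if a maximal geodesic downstairs has finite parameter interval, its lift is again maximal with the same interval, since a proper extension of the lift would project to a proper extension downstairs. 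I would package both observations as a one-line lemma --- a flat affine surface affinely covered by an open connected $U\subseteq\mathbb{R}^2$ with the standard flat connection is geodesically complete if and only if $U=\mathbb{R}^2$ --- and apply it uniformly to $\Phi_1,\dots,\Phi_5$.
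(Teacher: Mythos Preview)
Your proposal is correct and follows exactly the approach the paper sketches in the paragraph preceding the Corollary (the paper gives no formal proof): the components of each $\Phi_i$ are read off from the basis of $E(\mu,{}^{{}^\mathcal{A}\Gamma_i}\nabla)$ supplied by Theorem~\ref{THGA}, and (in)completeness is then decided by whether $\Phi_i$ is surjective onto $\mathbb{R}^2$. The only soft spot is the blanket inference ``affine local diffeomorphism $\Rightarrow$ covering onto its image,'' which is false in general; it is harmless here because $\Phi_1,\dots,\Phi_4$ are visibly injective and $\Phi_5$ is the complex exponential, and in any event your direct lifting argument in the final paragraph establishes incompleteness without invoking the covering property, while completeness for $\Gamma_0$ and $\Gamma_4$ follows since those $\Phi_i$ are global diffeomorphisms.
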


\subsubsection{Type~$\mathcal{B}$ surface models with ${\dim\{\mathfrak{K}(\mathcal{M})\}=6}$}

\begin{definition}\label{Def_flat_typeB_connections}\rm
		Let $0\neq c\in\mathbb{R}$. Let $\Gamma_{ij}{}^k=\frac{1}{x}C_{ij}{}^k$ define the following flat Type~$\mathcal{B}$ models
		\medbreak\qquad${}^\mathcal{B}\Gamma_0\equiv \Gamma_0:=\{{C}_{11}{}^1=0,{C}_{11}{}^2=0,{C}_{12}{}^1=0,{C}_{12}{}^2=0,
		{C}_{22}{}^1=0,{C}_{22}{}^2=0\}$,
		\medbreak\qquad${}^\mathcal{B}\Gamma_1^c:=\{{C}_{11}{}^1 = c-1, {C}_{11}{}^2 = 0, {C}_{12}{}^1 = 0, {C}_{12}{}^2 = c, 
		{C}_{22}{}^1 = 0, {C}_{22}{}^2 = 0\}$,
		\medbreak\qquad$
		{}^\mathcal{B}\Gamma_2:=\{{C}_{11}{}^1 = 1, {C}_{11}{}^2 = 0, {C}_{12}{}^1 = 0, {C}_{12}{}^2 = 0, 
		{C}_{22}{}^1 = 1, {C}_{22}{}^2 = 0\}$,
		\medbreak\qquad$
		{}^\mathcal{B}\Gamma_3^c:=\{{C}_{11}{}^1 = c, {C}_{11}{}^2 = 0, {C}_{12}{}^1 = 0, {C}_{12}{}^2 = 0,
		{C}_{22}{}^1 = 0, {C}_{22}{}^2 = 0\}$,\medbreak\qquad$
		{}^\mathcal{B}\Gamma_4:=\{{C}_{11}{}^1 = 0, {C}_{11}{}^2 = 1, {C}_{12}{}^1 = 0, {C}_{12}{}^2 = 0,
		{C}_{22}{}^1 = 0, {C}_{22}{}^2 = 0\}$,
		\medbreak\qquad$
		{}^\mathcal{B}\Gamma_5:=\{{C}_{11}{}^1 = 1, {C}_{11}{}^2 = 0, {C}_{12}{}^1 = 0, {C}_{12}{}^2 = 0, 
		{C}_{22}{}^1 = -1, {C}_{22}{}^2 = 0\}$,
		\medbreak\qquad$
		{}^\mathcal{B}\Gamma_6:=\{{C}_{11}{}^1 = -2, {C}_{11}{}^2 = 1, {C}_{12}{}^1 = 0, {C}_{12}{}^2 = -1, 
		{C}_{22}{}^1 = 0, {C}_{22}{}^2 = 0\}$.
\end{definition}

\begin{theorem}\label{THG} Let $\mathcal{M}=(\mathbb{R}^2,\nabla)$ be a Type~$\mathcal{B}$ model geometry with $\rho=0$. Then
	\begin{enumerate}
		\item $\nabla$
		is linearly equivalent to one of the structures ${}^{{}^\mathcal{B}{\Gamma_i^c}}\nabla$ given above. Furthermore, ${}^{{}^\mathcal{B}{\Gamma_i^c}}\nabla$ is not linearly equivalent
		to ${}^{{}^\mathcal{B}{\Gamma_j^{\tilde c}}}\nabla$ for $i\ne j$ or $c\neq\tilde c$.
		\item $E(\mu,\nabla)$ is independent of $\mu$ and $\dim\{E(\mu,\nabla)\}=3$. Moreover:
		\begin{enumerate}
			\item $E(\mu,{}^{{}^\mathcal{B}{\Gamma_0}}\nabla)=\operatorname{Span}\{1,x^1,x^2\}$.
			\item $E(\mu,{}^{{}^\mathcal{B}{\Gamma_1^c}}\nabla)=\operatorname{Span}\{1,(x^1)^c,(x^1)^c x^2\}$.
			\item $E(\mu,{}^{{}^\mathcal{B}{\Gamma_2}}\nabla)=\operatorname{Span}\{1,x^2,(x^1)^2+(x^2)^2\}$.
			\item $E(\mu,{}^{{}^\mathcal{B}{\Gamma_3^c}}\nabla)=\operatorname{Span}\{1,(x^1)^{c+1},x^2\}$.
			\item $E(\mu,{}^{{}^\mathcal{B}{\Gamma_4}}\nabla)=\operatorname{Span}\{1,x^1,x^1 \log(x^1)+x^2\}$.
			\item $E(\mu,{}^{{}^\mathcal{B}{\Gamma_5}}\nabla)=\operatorname{Span}\{1,x^2,(x^1)^2-(x^2)^2\}$.
			\item $E(\mu,{}^{{}^\mathcal{B}{\Gamma_6}}\nabla)=\operatorname{Span}\{1,(x^1)^{-1},(x^1)^{-1}(x^1\log(x^1)+x^2)\}$.
		\end{enumerate}
	\end{enumerate}
\end{theorem}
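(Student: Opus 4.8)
The plan is to reduce the statement to a finite, explicit computation. Since $\mathcal{M}$ is flat we have $\rho=0$, hence $\rho_s=0$, and the affine quasi-Einstein equation~\eqref{E1.a} collapses to the homogeneous Hessian equation $\mathcal{H}f=0$. This is manifestly independent of $\mu$, which is the first clause of Assertion~(2). For the dimension, Theorem~\ref{T1.1} already yields $\dim\{E(\mu,\nabla)\}\le3$; conversely, a flat torsion-free connection on the simply connected manifold $\mathbb{R}^+\times\mathbb{R}$ admits global coordinates $(u^1,u^2)$ with vanishing Christoffel symbols, in which $\mathcal{H}f=0$ reads $\partial_{u^i}\partial_{u^j}f=0$, so $E(\mu,\nabla)=\operatorname{Span}\{1,u^1,u^2\}$ is exactly $3$-dimensional (and there is no distinction between the local and global theory here). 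Consequently, once three linearly independent solutions are exhibited for each model, the indicated span is automatically all of $E(\mu,\nabla)$.

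For Assertion~(1) I would argue exactly as in the Type~$\mathcal{A}$ analogue (Theorem~\ref{THGA}), whose flat classification is carried out in~\cite{RBGGPV18}: write the curvature operator of the connection $\Gamma_{ij}{}^k=(x^1)^{-1}C_{ij}{}^k$ in terms of the six constants $C_{ij}{}^k$, impose $R\equiv0$ to obtain a polynomial system, solve it, and then normalise the solutions modulo the group of linear isomorphisms preserving the Type~$\mathcal{B}$ form, namely the shears $\Psi(x^1,x^2)=(x^1,ax^1+bx^2)$ with $b\ne0$. Carrying this out produces the seven families ${}^{{}^\mathcal{B}\Gamma_i^c}\nabla$ of Definition~\ref{Def_flat_typeB_connections}; pairwise non-equivalence, and in particular the fact that $c$ is a genuine modulus for ${}^{\mathcal{B}}\Gamma_1^c$ and ${}^{\mathcal{B}}\Gamma_3^c$, is then verified by comparing affine invariants together with the discrete data of the solution spaces in Assertion~(2) (for instance the presence or absence of a $\log(x^1)$ term, or whether the nonconstant solutions are pure powers of $x^1$ versus quadratic in $x^2$).

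For Assertion~(2) I would treat the seven models seriatim. For each one, $\mathcal{H}f=0$ becomes the overdetermined first order linear system $\partial_{x^i}\partial_{x^j}f=(x^1)^{-1}C_{ij}{}^k\,\partial_{x^k}f$, which one integrates directly. The $\partial_{x^2}^2$ equation controls the $x^2$-dependence of $f$, forcing $f$ to be at most quadratic in $x^2$ and linear in $x^2$ when $C_{22}{}^1=C_{22}{}^2=0$; substituting this ansatz into the mixed and $\partial_{x^1}^2$ equations reduces them to Euler-type ODEs in $x^1$, whose solutions are powers $(x^1)^\alpha$ and, in the resonant cases ${}^{{}^\mathcal{B}\Gamma_4}\nabla$ and ${}^{{}^\mathcal{B}\Gamma_6}\nabla$, a term involving $\log(x^1)$ — precisely the form predicted by Theorem~\ref{T2.5}~(2). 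For example, for ${}^{\mathcal{B}}\Gamma_1^c$ the relation $\partial_{x^2}^2f=0$ gives $f=A(x^1)x^2+B(x^1)$, the mixed equation forces $A\propto(x^1)^c$, and the $\partial_{x^1}^2$ equation forces $B\in\operatorname{Span}\{1,(x^1)^c\}$, yielding the stated span. Equivalently, one may simply substitute the three listed functions into $\mathcal{H}f=0$, check that they solve it and are linearly independent, and invoke the dimension count of the first paragraph.

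The only genuinely laborious ingredient is the classification underlying Assertion~(1) — solving the flatness system and performing the reduction modulo shears without redundancy or omission — and I would quote this from the cited classification work rather than reproduce it. Granting that, Assertion~(2) is a bounded and mechanical verification with no conceptual obstacle.
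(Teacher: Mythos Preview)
Your proposal is correct and follows essentially the same approach as the paper: Assertion~(1) is quoted from \cite{RBGGPV18}, and Assertion~(2) is handled by a direct computation once one observes that $\rho=0$ reduces the quasi-Einstein equation to $\mathcal{H}f=0$ independently of $\mu$. You supply considerably more detail than the paper's two-line proof, but the strategy is identical.
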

\begin{proof}
	Assertion (1)  was established in \cite{RBGGPV18}. Assertion (2) follows by a direct computation.
\end{proof}

\subsection{Affine gradient Ricci solitons versus affine quasi-Einstein structures}
An {\it affine gradient Ricci soliton} is a triple $(M,\nabla,f)$ where $f$ is a solution of the soliton equation $\mathcal{H} f+\rho_s=0$ on
an affine surface $\mathcal{M}=(M,\nabla)$. Homogeneous affine gradient Ricci solitons where investigated in \cite{BGG16}, showing that the non-trivial ones (i.e., solutions with non-constant function $f$) are given by one of the following:
\begin{enumerate}
\item A surface with $\dim\{\mathfrak{K}(\mathcal{M})\}=4$,
\item A Type~$\mathcal{B}$ surface $\mathcal{P}_{a,c}^\pm$, $a\neq 0$, with $\dim\{\mathfrak{K}(\mathcal{M})\}=2$ given by
$$
\begin{array}{llll}\mathcal{P}_{a,c}^\pm:&C_{11}{}^1=\frac{1}{2} \left(a^2+4 a\mp2c^2+2\right),
&C_{11}{}^2=c,&C_{12}{}^1=0,\\[0.05in]
&C_{12}{}^2=\frac{1}{2} \left(a^2+2 a\mp2c^2\right),&
C_{22}{}^1=\pm1,&C_{22}{}^2=\pm2c.
\end{array}
$$
\end{enumerate}
Surfaces with $\dim\{\mathfrak{K}(\mathcal{M})\}=4$ admit non-constant solutions of the quasi-Einstein equation for all possible eigenvalue $\mu$. In contrast, a surface $\mathcal{P}_{a,c}^\pm$ with $a\neq 0$ admits a non-constant solution of the quasi-Einstein equation if and only if $\mu=0$ and $(a,c)=(-2,0)$. In such a case $\mathcal{P}_{a,c}^\pm$ is linearly isomorphic to a surface in Theorem~\ref{T5.6}~(3) and $E(0,\nabla)=\operatorname{Span}\{1,x^2\}$.

The above shows that although the affine gradient Ricci soliton equation and the affine quasi-Einstein equation have some similarities, they behave in a quite different fashion.

\subsection{K\"ahler quasi-Einstein structures}
The existence of solutions of the quasi-Einstein equation is a rather restrictive condition in the K\"ahler setting. 

\begin{theorem}\label{T6.6}{\rm\cite{C-S-W}}
Let $(M^n, g)$ be an $n$-dimensional  Riemannian manifold with a K\"ahler quasi-Einstein metric
for $\mu\neq 0$. Then $M$ is locally a Riemannian product $M_1 \times M_2$, and $f$ can be considered as a function of $M_2$, where $M_1$ is an $(n -2)$-dimensional Einstein manifold with Einstein constant $\lambda$, and $M_2$ is a $2$-dimensional quasi-Einstein manifold.
\end{theorem}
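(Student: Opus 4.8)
The statement is due to the authors of \cite{C-S-W}, so I only sketch the strategy. Write the Kähler quasi-Einstein equation as $\mathcal{H}f+\rho-\mu\,df\otimes df=\lambda g$, where $\rho$ is the (automatically $J$-invariant) Ricci tensor of the Kähler metric $g$ and $\mu\ne0$. The plan is: first replace $f$ by a function whose Hessian is $J$-invariant; then use the Kähler identities to produce a parallel complex line field; and finally invoke the local de Rham decomposition to split off a $2$-dimensional factor carrying the whole of $f$.

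First I would set $h:=e^{-\mu f}$, which is a well-defined positive function exactly because $\mu\ne0$. A short computation using $dh=-\mu h\,df$ gives $\mathcal{H}h=-\mu h\,(\mathcal{H}f-\mu\,df\otimes df)=\mu h\,(\rho-\lambda g)$. In particular $\mathcal{H}h$ is $J$-invariant, so $\xi:=J\nabla h$ is a Killing vector field and, since $\nabla J=0$, the $2$-form $\sigma(X,Y):=\mathcal{H}h(JX,Y)$ equals $\tfrac12\,d\xi^{\flat}$ and is therefore closed. On the other hand $\sigma=\mu h\,(\varrho-\lambda\omega)$, where $\varrho(X,Y)=\rho(JX,Y)$ is the (closed) Ricci form and $\omega(X,Y)=g(JX,Y)$ is the Kähler form; differentiating and cancelling the terms in which $\varrho$ and $\omega$ are differentiated yields $dh\wedge(\varrho-\lambda\omega)=0$.

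Next, on the open set $U=\{dh\ne0\}$ (if $U$ is empty then $f$ is constant and $M$ is Kähler--Einstein, a degenerate case), a standard division lemma for differential forms gives $\varrho-\lambda\omega=dh\wedge\gamma$ locally. Combining this with the $J$-invariance of $\rho-\lambda g$ and the fact that $J$-invariant subspaces are even-dimensional forces the image of the self-adjoint endomorphism $(\rho-\lambda g)^{\sharp}$ to be either zero, in which case $\rho=\lambda g$ on $U$, $\nabla h$ is parallel, and $M$ again splits off a flat $\mathbb{C}$-factor, or exactly the complex line $\mathcal{D}:=\operatorname{span}\{\nabla h,\,J\nabla h\}$. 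In the second (generic) case $\rho=\lambda g$ and $\mathcal{H}h=0$ on $\mathcal{D}^{\perp}$, i.e. $\nabla_{X}\nabla h=0$ whenever $X\perp\mathcal{D}$; together with $\nabla|\nabla h|^{2}=2\,\mathcal{H}h(\cdot,\nabla h)^{\sharp}\in\mathcal{D}$ this shows that $\mathcal{D}$, and hence $\mathcal{D}^{\perp}$, is a parallel distribution. The local de Rham theorem then splits $M$ as a product $M_{2}\times M_{1}$, with $M_{2}$ the $2$-dimensional leaf of $\mathcal{D}$ and $M_{1}$ the $(n-2)$-dimensional leaf of $\mathcal{D}^{\perp}$; since $\rho|_{\mathcal{D}^{\perp}}=\lambda g$ the factor $M_{1}$ is Einstein with constant $\lambda$, since $dh$ (equivalently $df$) annihilates $\mathcal{D}^{\perp}$ the function $f$ descends to a function on $M_{2}$, and restricting the original equation to $TM_{2}=\mathcal{D}$ exhibits $M_{2}$ as a $2$-dimensional quasi-Einstein manifold. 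The main obstacle is the divisibility step, namely extracting from $dh\wedge(\varrho-\lambda\omega)=0$ and $J$-invariance the precise conclusion that $(\rho-\lambda g)^{\sharp}$ is supported on $\mathcal{D}$, together with the separate bookkeeping for the Einstein locus; once $\mathcal{D}$ is known to be parallel the remaining steps are routine.
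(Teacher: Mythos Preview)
The paper does not provide a proof of this theorem: it is quoted verbatim as an external result from \cite{C-S-W} (Case--Shu--Wei), and is included only to motivate the contrasting indefinite example in Theorem~\ref{T6.8}. There is therefore no ``paper's own proof'' to compare your sketch against.

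Your outline is broadly the Case--Shu--Wei argument: pass to $h=e^{-\mu f}$ so that $\mathcal{H}h$ becomes $J$-invariant, observe that $J\nabla h$ is Killing, exploit closedness of the Ricci form to obtain $dh\wedge(\varrho-\lambda\omega)=0$, and deduce that the complex line spanned by $\nabla h$ and $J\nabla h$ is parallel so that the local de~Rham splitting applies. The one place where your sketch is genuinely soft is the ``division lemma'' step: from $dh\wedge(\varrho-\lambda\omega)=0$ one gets that $\varrho-\lambda\omega$ is divisible by $dh$, but the passage from there to ``the self-adjoint endomorphism $(\rho-\lambda g)^\sharp$ has image contained in $\mathcal{D}$'' needs the additional observation that a $J$-invariant symmetric $2$-tensor whose associated $2$-form is a wedge $dh\wedge\gamma$ must in fact be of the form $a\,dh\odot dh + b\,(dh\circ J)\odot(dh\circ J)$ on $\mathcal{D}$ and vanish on $\mathcal{D}^\perp$; you flag this as ``the main obstacle'' but do not actually carry it out. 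That said, since the paper itself offers no argument at all, your sketch already goes well beyond what the paper contains.
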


An analogous result does not hold true in the indefinite setting and our final purpose is to present an example illustrating this situation. 
Let $(T^*M,g_{\nabla,\phi})$ be the Riemannian extension of an affine surface $\mathcal{M}=(M,\nabla)$ and let $J$ be a complex structure on $\mathcal{M}$. Take local coordinates $(x^1,x^2)$ so that $J\partial_{x^1}=\partial_{x^2}$ and consider the induced complex structure on the fibers of $T^*M$, with local coordinates $(x^1,x^2,y_1,y_2)$, determined by 
$$
\mathcal{J}\partial_{y_1}=\partial_{y_2}\,, \quad
\mathcal{J}\partial_{y_2}=-\partial_{y_1}\,.
$$
Then $\mathcal{J}$ determines an almost Hermitian structure on $(T^*M,g_{\nabla,\phi})$, which is called a \emph{proper almost Hermitian Walker structure} in \cite{Matsushita2005}, by
$$
\begin{array}{l}
\mathcal{J}\partial_{x^1}=\partial_{x^2}-g_{\nabla,\phi}(\partial_{x^1},\partial_{x^2})\partial_{y_1} +\frac{1}{2}\{ g_{\nabla,\phi}(\partial_{x^1},\partial_{x^1})-g_{\nabla,\phi}(\partial_{x^2},\partial_{x^2})\}\partial_{y_2}\,,
\\
\noalign{\medskip}
\mathcal{J}\partial_{x^2} =-\partial_{x^1}+\frac{1}{2}\{ g_{\nabla,\phi}(\partial_{x^1},\partial_{x^1})-g_{\nabla,\phi}(\partial_{x^2},\partial_{x^2})\}\partial_{y_1} +g_{\nabla,\phi}(\partial_{x^1},\partial_{x^2})\partial_{y_2}\,.
\end{array}
$$
Now, one has

\begin{lemma}\label{L6.7}
The triple $(T^*M,g_{\nabla,\phi},\mathcal{J})$ is an indefinite K\"ahler manifold if and only if
the triple $(M,\nabla,J)$ is a K\"ahler affine surface.
\end{lemma}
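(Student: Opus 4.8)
The plan is to reduce the K\"ahler condition on $(T^*M,g_{\nabla,\phi},\mathcal{J})$ to a pair of conditions on the affine surface $(M,\nabla,J)$ — namely that $J$ be integrable (a Nijenhuis tensor computation) and that $\nabla J=0$ (parallelism of $J$) — and to recognize that these two conditions together are precisely what it means for $(M,\nabla,J)$ to be a K\"ahler affine surface. Since $g_{\nabla,\phi}$ is a Walker metric and $\mathcal{J}$ is the proper almost Hermitian Walker structure adapted to it, the almost Hermitian compatibility $g_{\nabla,\phi}(\mathcal{J}\cdot,\mathcal{J}\cdot)=g_{\nabla,\phi}(\cdot,\cdot)$ holds automatically by construction; so the content of ``indefinite K\"ahler'' is exactly $\nabla^{g_{\nabla,\phi}}\mathcal{J}=0$, where $\nabla^{g_{\nabla,\phi}}$ is the Levi-Civita connection of the Riemannian extension.

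\textbf{Step 1.} First I would record the Christoffel symbols of $g_{\nabla,\phi}$ in the coordinates $(x^1,x^2,y_1,y_2)$. These are standard for modified Riemannian extensions: the ``horizontal'' part reproduces $\Gamma_{ij}{}^k$ on $M$, and the remaining components involve $y_k$ linearly together with $\phi$ and the curvature of $\nabla$. The key structural fact I would invoke is that $\partial_{y_1},\partial_{y_2}$ span a parallel (indeed null and totally geodesic) distribution, and that the projection $\pi:T^*M\to M$ is affine in the appropriate sense, so that covariant derivatives of $\pi$-related objects project nicely.

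\textbf{Step 2.} Next I would compute $(\nabla^{g_{\nabla,\phi}}_{X}\mathcal{J})Y$ for $X,Y$ ranging over the coordinate frame $\{\partial_{x^1},\partial_{x^2},\partial_{y_1},\partial_{y_2}\}$, using the explicit formulas for $\mathcal{J}$ given just before the statement. The derivatives in the fiber directions and the mixed terms should vanish identically or reduce to tautologies because of the parallel null distribution and because $\mathcal{J}$ acts on the fiber coordinates by the constant matrix $\partial_{y_1}\mapsto\partial_{y_2}\mapsto-\partial_{y_1}$. The surviving components, after using Step 1, should collapse to expressions built from $(\nabla_{\partial_{x^i}}J)\partial_{x^j}$ on $M$ and, possibly, from the Nijenhuis tensor $N_J$ of $J$ on $M$ (the latter entering through the torsion-type terms that measure integrability of $\mathcal{J}$ on $T^*M$). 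So $\nabla^{g_{\nabla,\phi}}\mathcal{J}=0$ holds if and only if $\nabla J=0$ and $N_J=0$ on $M$; but in dimension two any integrable almost complex structure that is $\nabla$-parallel is automatically integrable once $\nabla J=0$, and conversely, so these reduce to the single statement that $(M,\nabla,J)$ is K\"ahler affine, i.e.\ $\nabla J=0$ for an integrable $J$. I would then state this equivalence and conclude.

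\textbf{The main obstacle} will be bookkeeping in Step 2: correctly tracking the $y_k$-dependent and $\phi$-dependent Christoffel symbols of $g_{\nabla,\phi}$ and verifying that all the ``extra'' terms coming from the modified extension (the $\phi_{ij}-2y_k\Gamma_{ij}{}^k$ block and the curvature terms it generates in $\nabla^{g_{\nabla,\phi}}$) cancel against the corresponding terms in $\mathcal{J}\partial_{x^i}$, leaving only the pullback of $\nabla J$ on $M$. A clean way to organize this — and the shortcut I would actually use — is to exploit the known fact that for the (unmodified) Riemannian extension $g_{\nabla,0}$, an affine tensor field on $M$ lifts to a parallel tensor on $T^*M$ iff it is $\nabla$-parallel on $M$, and then check that the modification by $\phi$ and the particular form of $\mathcal{J}$ do not disturb this, since $\phi$ only enters the purely horizontal block and $\mathcal{J}$ differs from the naive horizontal lift of $J$ by terms valued in the parallel fiber distribution. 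This makes the equivalence essentially formal once the adapted-frame computation is set up correctly.
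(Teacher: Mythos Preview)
Your outline is plausible and would likely succeed, but it takes a genuinely different route from the paper. The paper does not compute $\nabla^{g_{\nabla,\phi}}\mathcal{J}$ at all. Instead it invokes Matsushita's characterization \cite{Matsushita2005} of when a proper almost Hermitian Walker structure is K\"ahler: this reduces the K\"ahler condition to the simple first-order relations
\[
\partial_{y_1}g_{11}=-\partial_{y_1}g_{22}=\partial_{y_2}g_{12},\qquad
\partial_{y_2}g_{11}=-\partial_{y_2}g_{22}=-\partial_{y_1}g_{12}
\]
on the $(x^i,x^j)$-block of the metric. Since $g_{ij}=\phi_{ij}-2y_k\Gamma_{ij}{}^k$, these $y$-derivatives are just $-2\Gamma_{ij}{}^k$, so the conditions become the purely algebraic Christoffel relations $\Gamma_{11}{}^1=-\Gamma_{22}{}^1=\Gamma_{12}{}^2$ and $\Gamma_{11}{}^2=-\Gamma_{22}{}^2=-\Gamma_{12}{}^1$. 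The paper then cites Lemma~3.6 of \cite{parallel} to identify these relations as exactly the condition that $(M,\nabla,J)$ be K\"ahler affine. So the paper's argument is two citations and a one-line differentiation, with no Levi-Civita Christoffel symbols of $g_{\nabla,\phi}$ ever written down.

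Your approach trades these citations for a self-contained computation; the price is the bookkeeping you yourself flag in Step~2, and the ``shortcut'' you propose (lifting parallel tensors through the Riemannian extension) is itself a nontrivial lemma that would need proof. Two small remarks: integrability of $J$ on a surface is automatic (the Nijenhuis tensor vanishes identically in real dimension~$2$), so your discussion of $N_J$ is superfluous; and the independence from $\phi$ that you expect is immediate in the paper's framework precisely because Matsushita's conditions only involve $\partial_{y_k}g_{ij}$, which kills $\phi$ outright.
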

\begin{proof}
The conditions for a proper almost Hermitian Walker structure to be K\"ahler were considered in \cite{Matsushita2005}. Hence one has that $(T^*M,g_{\nabla,\phi},\mathcal{J})$ is K\"ahler if and only if
$$
\partial_{y_1}g_{\nabla,\phi}(\partial_{x^1},\partial_{x^1})
=-\partial_{y_1}g_{\nabla,\phi}(\partial_{x^2},\partial_{x^2})
=\partial_{y_2}g_{\nabla,\phi}(\partial_{x^1},\partial_{x^2})\,,
$$
$$
\partial_{y_2}g_{\nabla,\phi}(\partial_{x^1},\partial_{x^1})
=-\partial_{y_2}g_{\nabla,\phi}(\partial_{x^2},\partial_{x^2})
=-\partial_{y_1}g_{\nabla,\phi}(\partial_{x^1},\partial_{x^2})\,.
$$
Equivalently, the Christoffel symbols of $\mathcal{M}$ satisfy
$$
\Gamma_{11}{}^1=-\Gamma_{22}{}^1=\Gamma_{12}{}^2\text{ and }
\Gamma_{11}{}^2=-\Gamma_{22}{}^2=-\Gamma_{12}{}^1\,.
$$
and it follows from Lemma 3.6 in \cite{parallel} that these are necessary and sufficient conditions for  $(M,\nabla,J)$ to be a K\"ahler affine surface.
\end{proof}

An affine surface $\mathcal{M}$ admits a K\"ahler structure if and only if the symmetric part of the Ricci tensor $\rho_s$ is recurrent and $\operatorname{det}\{\rho_s\}>0$, i.e., $\rho_s$ is either positive definite or negative definite.
Hence, non-flat Type~$\mathcal{A}$ homogeneous surfaces do not admit any K\"ahler affine structure. Type~$\mathcal{C}$ homogeneous surfaces admit a K\"ahler affine structure provided that they correspond to the Levi-Civita connection of the sphere or the hyperbolic plane. Furthermore, in any of these cases the quasi-Einstein equation has non-trivial solutions only for $\mu=-1$ (cf. Theorem \ref{T1.3} and Theorem \ref{T4.7}). 

Finally we consider the case of Type~$\mathcal{B}$ homogeneous surfaces with $\dim\{\mathfrak{K}(\mathcal{M})\}=2$.
First of all, observe that  Type~$\mathcal{B}$ homogeneous surfaces with $\rho_s=0$ admit affine K\"ahler structures, but no non-constant solution of the affine quasi-Einstein equation. Assume therefore that $\rho_s\neq 0$.
It follows from Lemma 5.6 in \cite{parallel} that if a Type~$\mathcal{B}$ surface admits a K\"ahler structure, then
$J=\left(\begin{array}{cc}\mathfrak{t}^1{}_1&1\\
\mathfrak{t}^2{}_1&-\mathfrak{t}^1{}_1\end{array}\right)\in M_2(\mathbb{R})$,
and the Christoffel symbols satisfy
$$
\begin{array}{lll}
C_{11}{}^1=C_{22}{}^1\, \mathfrak{t}^2{}_1+2(C_{22}{}^2+2C_{22}{}^1 \, \mathfrak{t}^1{}_1)\mathfrak{t}^1{}_1,&
C_{12}{}^1=C_{22}{}^2+2C_{22}{}^1\, \mathfrak{t}^1{}_1,&
\\
\noalign{\medskip}
C_{11}{}^2=(C_{22}{}^2+2C_{22}{}^1\, \mathfrak{t}^1{}_1)\mathfrak{t}^2{}_1,& C_{12}{}^2=C_{22}{}^1\,\mathfrak{t}^2{}_1 ,
&C_{22}{}^1\neq 0.
\end{array}
$$

Set  $C_{11}{}^1=C_{12}{}^2=-C_{22}{}^1\neq 0$, and $C_{11}{}^2=-C_{22}{}^2=-C_{12}{}^1=0$. Then the previous relations show that $\mathfrak{t}^1{}_1=0$ and $\mathfrak{t}^2{}_1=-1$, thus defining a K\"ahler structure. Moreover, Theorem \ref{T5.10} shows that the affine quasi-Einstein equation admits non-constant solutions $f(x^1,x^2)=(x^1)^{1+2C_{11}{}^1}$ for $\mu=1+2C_{11}{}^1$.
Summarizing the above we have the following family of examples.

\begin{theorem}\label{T6.8}
	Let $(M,\nabla,J)$ be a K\"ahler Type~$\mathcal{B}$ affine surface with 
	$C_{11}{}^1=C_{12}{}^2=-C_{22}{}^1\neq 0$, and $C_{11}{}^2=-C_{22}{}^2=-C_{12}{}^1=0$.
Then $(T^*M,g_{\nabla,\phi},\mathcal{J})$ is an indefinite quasi-Einstein K\"ahler manifold with potential function
	$f(x^1,x^2)=(x^1)^{1+2C_{11}{}^1}$ for any $\mu=1+2C_{11}{}^1$.
	Furthermore does not split a complex line as in Theorem~\ref{T6.6}.
\end{theorem}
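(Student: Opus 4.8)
The plan is to treat the three assertions separately: the K\"ahler part follows from Lemma~\ref{L6.7}, the quasi-Einstein part from Theorem~\ref{T5.10} together with the correspondence recalled in the Introduction, and the non-splitting statement from a short comparison of Ricci operators. First I would check that the given affine surface is K\"ahler affine. Since $\Gamma_{ij}{}^k=(x^1)^{-1}C_{ij}{}^k$, the hypotheses $C_{11}{}^1=C_{12}{}^2=-C_{22}{}^1\neq0$ and $C_{11}{}^2=-C_{22}{}^2=-C_{12}{}^1=0$ are precisely the relations $\Gamma_{11}{}^1=-\Gamma_{22}{}^1=\Gamma_{12}{}^2$, $\Gamma_{11}{}^2=-\Gamma_{22}{}^2=-\Gamma_{12}{}^1$ appearing in the proof of Lemma~\ref{L6.7}, with $J$ the complex structure ($\mathfrak{t}^1{}_1=0$, $\mathfrak{t}^2{}_1=-1$) singled out in the discussion preceding the statement. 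Hence $(M,\nabla,J)$ is a K\"ahler affine surface, and Lemma~\ref{L6.7} gives that $(T^*M,g_{\nabla,\phi},\mathcal{J})$ is an indefinite K\"ahler manifold.

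Next I would exhibit the quasi-Einstein structure. As recalled in the Introduction, a \emph{positive} solution $f\in E(\mu,\nabla)$ of the affine quasi-Einstein equation yields a (self-dual, isotropic) quasi-Einstein structure on $(T^*M,g_{\nabla,\phi})$ whose parameter is governed by $\mu$. Here $\Delta:=-C_{11}{}^1+C_{12}{}^2+1=1$, so Theorem~\ref{T5.10} (applied, after rescaling $x^2$ to the normalisation $C_{22}{}^1=\pm1$, which alters neither $C_{11}{}^1$ nor the function $(x^1)^{1+2C_{11}{}^1}$) shows that $(x^1)^{1+2C_{11}{}^1}\in E(1+2C_{11}{}^1,\nabla)$; on $M=\mathbb{R}^+\times\mathbb{R}$ this function is everywhere positive, so it is an admissible potential. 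Substituting $f=(x^1)^{1+2C_{11}{}^1}$ directly into $\mathcal{H}f=\mu f\rho_s$ is an equivalent check. Along the way one records $\rho=(x^1)^{-2}\{C_{11}{}^1\,dx^1\otimes dx^1-dx^2\otimes dx^2\}$, which is nowhere zero, so $\mathcal{M}$ is not flat.

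For the non-splitting statement I would argue by contradiction, and this is where the main work lies. Suppose $(T^*M,g_{\nabla,\phi},\mathcal{J})$ locally decomposed as in Theorem~\ref{T6.6}. Since $\dim_{\mathbb{R}}T^*M=4$, both factors $M_1$ and $M_2$ would be real surfaces, and because on any surface the Ricci operator equals $\tfrac12\tau\,\mathrm{Id}$ in every signature, the Ricci operator of the product would be $\tfrac12\tau_1\,\mathrm{Id}_{TM_1}\oplus\tfrac12\tau_2\,\mathrm{Id}_{TM_2}$, hence diagonalisable. On the other hand $(T^*M,g_{\nabla,\phi})$ is isotropic: by the analysis of the modified Riemannian extension~\eqref{E1.b} in \cite{BGGV17} its Ricci tensor has, in those coordinates, only components along $dx^i\otimes dx^j$ determined by $\rho_s$, so the associated Ricci operator $\widehat{\mathrm{Ric}}$ maps $T(T^*M)$ into the totally null vertical distribution $\ker\pi_*$ and annihilates that distribution; thus $\widehat{\mathrm{Ric}}$ is two-step nilpotent, and it is non-zero since $\mathcal{M}$ is not flat. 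A non-zero nilpotent endomorphism is not diagonalisable, a contradiction. As a K\"ahler product splitting is in particular a pseudo-Riemannian one, the same argument shows $(T^*M,g_{\nabla,\phi},\mathcal{J})$ does not split off a complex line, as asserted. The only delicate point is the nilpotency of $\widehat{\mathrm{Ric}}$, which I would read off from the fact that the inverse of $g_{\nabla,\phi}$ has vanishing $dx^i\,dx^j$ block, so raising one index on any tensor $\rho_{ij}\,dx^i\otimes dx^j$ produces a vertical vector field, precisely the ``isotropic'' feature used in the Introduction.
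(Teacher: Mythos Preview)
Your argument is correct. For the K\"ahler and quasi-Einstein assertions you follow exactly the paper's route: Theorem~\ref{T6.8} is presented there as a summary of the discussion immediately preceding it, which invokes Lemma~\ref{L6.7} for the K\"ahler structure and Theorem~\ref{T5.10} for the potential $(x^1)^{1+2C_{11}{}^1}\in E(1+2C_{11}{}^1,\nabla)$, just as you do (your remark that one can alternatively substitute directly into $\mathcal{H}f=\mu f\rho_s$ avoids even the rescaling to $C_{22}{}^1=\pm1$).

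For the non-splitting assertion the paper gives no explicit argument; your contradiction via the Ricci operator is a genuine addition. The computation behind your key claim is sound: for the modified Riemannian extension one checks directly that $\hat\Gamma^{c}_{y_iy_j}=0$ and $\hat\Gamma^{x^k}_{x^iy_j}=0$, from which $\hat\rho_{y_iy_j}=\hat\rho_{x^iy_j}=0$ follow, so $\hat\rho$ is supported in the $dx^i\otimes dx^j$ block regardless of $\phi$. Since $g_{\nabla,\phi}^{-1}$ has vanishing $\partial_{x^i}\otimes\partial_{x^j}$ block, the Ricci operator sends everything into $\operatorname{Span}\{\partial_{y_1},\partial_{y_2}\}$ and annihilates that span, hence is two-step nilpotent and non-zero (as $\rho_s\ne0$). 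A pseudo-Riemannian product of surfaces has diagonalisable Ricci operator, so no such splitting exists. One small inaccuracy: with the deformation $\phi$ the components $\hat\rho_{ij}$ are not determined by $\rho_s$ alone (there are $\phi$-dependent terms), but this does not affect the nilpotency, which is all you use.
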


\end{document}